\newtheorem{theorem}{Theorem}[section]
\newtheorem{lemma}[theorem]{Lemma}
\newtheorem{proposition}[theorem]{Proposition}
\newtheorem{corollary}[theorem]{Corollary}
\theoremstyle{definition}
\newtheorem{definition}[theorem]{Definition}
\theoremstyle{remark}
\numberwithin{equation}{section}
\newcommand\norm[1]{\left\lVert#1\right\rVert_{\mathbb{TV}}}
\newcommand{\ud}{\mathrm{d}}
\newcommand{\cut}{{\mathrm{cut}}}
\DeclareMathOperator{\diag}{diag}
\title[Cut-off phenomenon for  OU processes driven by L\'evy processes]{Cut-off phenomenon for  Ornstein-Uhlenbeck processes driven by L\'evy processes}
\author{Barrera, Gerardo}
\address{
University of Alberta, Department of Mathematical and Statistical Sciences. Central Academic Building. $116$ Street and $85$ Avenue. Postal Code: T$6$G-$2$G$1$. Edmonton, Alberta, Canada.}
\email{barrerav@ualberta.ca }
\author{Pardo, Juan Carlos}
\address{
 CIMAT. Jalisco S/N, Valenciana, CP $36240$. Guanajuato, Guanajuato, Mexico.}
\email{jcpardo@cimat.mx }
\keywords{Cut-off phenomenon, Ornstein-Uhlenbeck processes, L\'evy processes, Total variation distance.}
\date{\today}
\subjclass[2000]{60G51, 60G60, 60G10, 60E07, 37A25}
\begin{document}
\maketitle
\begin{abstract}
In this paper, we study the cut-off phenomenon  under the total variation distance of $d$-dimensional Ornstein-Uhlenbeck processes which are driven by L\'evy processes. That is to say, under the total variation distance, there is an abrupt convergence  of the aforementioned process to its equilibrium, i.e. limiting distribution. Despite that the limiting distribution is not explicit, its distributional properties allow us to deduce that a profile function always exists in the reversible cases and it may exist in the non-reversible cases under suitable conditions on the limiting distribution. The cut-off phenomena for the average and superposition processes are also determined. 
\end{abstract}

\section{Introduction}
 The term {\it cut-off phenomenon}   was introduced by  Aldous and  Diaconis  \cite{AD} in the early eighties to describe the phenomenon of drastic convergence to the equilibrium of  Markov chains models related to  card shuffling. 
Although the {\it cut-off phenomenon} has mostly been discussed in the literature for Markov chains with finite state space, it makes perfect sense in the general  
 context of  Markov processes having limiting distribution. To be more precise, let  us consider $(X^{(\epsilon)}, \epsilon>0)$ a parametrised family of stochastic processes such that for each $\epsilon>0$, the process $X^{(\epsilon)}$ possesses a limiting distribution, here denoted by $\mu^{(\epsilon)}$. Roughly speaking the {\it cut-off phenomenon} refers to the following asymptotic behaviour: as $\epsilon$ decreases,  a suitable distance between the laws of $X^{(\epsilon)}_t$ and the corresponding  limiting distribution $\mu^{(\epsilon)}$ converges to a step function centered at deterministic times $t^{\cut}_\epsilon$.  In other words,   the function $\epsilon\mapsto t^\cut_{\epsilon}$ is such that the distance is asymptotically maximal for times smaller than $t^\cut_\epsilon-o(t^\cut_\epsilon)$ and asymptotically zero for times larger than $t^\cut_\epsilon+o(t^\cut_\epsilon)$.
 
Limiting distributions of stochastic processes are an important feature in probability theory and mathematical physics; and typically  they are not so easy to describe explicitly.  The cut-off phenomenon can be used to simulate such limiting distributions.  More precisely,  the cut-off time $t^\cut_\epsilon$ determines the steps needed to converge to the limiting distribution  within an acceptable error (this will be discussed in more detail below).  Also,  the cut-off phenomenon   can be used  to understand more complicated phenomena which are important in mathematical physics such as metastability (see for instance Barrera et al. \cite{BY2, BY1}).

For an introduction to  the subject in  the Markov chain setting, we refer to   Diaconis \cite{DI},  Mart\'inez and Ycart \cite{MY} and the monograph of Levin et al. \cite{LPW}. We also refer to Saloff-Coste \cite{SAL} for a review of random walks where such  phenomenon appears.  Chen and Saloff-Coste \cite{CSC} considered the cut-off phenomenon for some ergodic Markov processes such as Brownian motions on a compact  Riemann manifold and $k$-regular expander graphs. Lachaud \cite{BL} and Barrera \cite{BA} considered the case of  Ornstein-Uhlenbeck processes driven by a Brownian motion and more recently Barrera and Jara (see \cite{BJ, BJ1}) studied the case of  random dynamical systems with coercive vector fields. 
 
The authors in  \cite{BJ, BJ1} considered the following Langevin dynamics  which are described by the stochastic differential equation (SDE for short), 
\begin{equation}\label{over}
\left\{
\begin{array}{r@{\;=\;}l}
\ud X^{(\epsilon)}_t&-F(X^{(\epsilon)}_t)\ud t + \sqrt{\epsilon}\ud B_t\quad  
\textrm{  for }\quad t\geq 0,\\
X^{(\epsilon)}_0 & x_0\in\mathbb{R}^d\setminus\{0\},
\end{array}
\right.
\end{equation}
where $\epsilon>0$, which can be considered as the amplitude of the noise, $F$ is a strongly coercive vector field with a unique asymptotically stable attractor at $0$ and satisfying an exponential growth condition; and $(B_t,t\geq 0)$ is a standard Brownian motion in $\mathbb{R}^d$.  Under the above assumptions, such SDEs converge to their equilibrium and also exhibits, under the total variation distance,  the cut-off phenomenon. Moreover, in the unidimensional case and assuming that the dynamics are given by a multi-well potential, the cut-off phenomenon allows  to describe the densities of the quasi-stationary measures which are associated to the metastability phenomena (see Section $5$ in \cite{BJ} for further details about this fact).\\

In this manuscript, we are interested in the cut-off phenomenon of  Ornstein-Uhlenbeck processes driven by L\'evy processes (or OUL processes). The latter  are defined  as the unique strong solution of the following SDE
\begin{equation}\label{first}
\left\{
\begin{array}{r@{\;=\;}l}
\ud X^{(\epsilon)}_t &-Q X^{(\epsilon)}_t\ud t+\sqrt{\epsilon}\ud \xi_t\quad  
\textrm{  for } \quad t>0,\\
X^{(\epsilon)}_0 & x_0\in\mathbb{R}^d\setminus\{0\},
\end{array}
\right.
\end{equation}
where  $Q$ is a $d$-squared  real matrix whose eigenvalues have positive real parts, $\xi=(\xi_t,t\geq 0)$ denotes a $d$-dimensional L\'evy process and $\epsilon>0$.  Such class of processes are also  known as processes of the Ornstein-Uhlenbeck type, according to Sato and Yamazato \cite{SaYa} terminology,  or the L\'evy driven case of a generalised Ornstein-Uhlenbeck process, according to Kevei \cite{PK} and the references therein.

The study of this particular case is relevant for  the understanding of  the cut-off and metastability phenomena of a wide class  of SDEs driven by L\'evy noises in $\mathbb{R}^d$ and, moreover, it also shows the complexity that brings to  the problem  the addition of a Poisson jump structure with respect to the Brownian case. For instance, by replacing the Brownian component by  a L\'evy process  in \eqref{over} and since the vector field is  strongly coercive, a natural  approach to deduce the cut-off phenomenon for the family of SDEs  is via  a linearisation technique (similar to \cite{BJ, BJ1})  and henceforth  the  Ornstein-Uhlenbeck case  is needed. Actually, this is the strategy that is used by the authors, together with M. H\"ogele, in a forthcoming manuscript \cite{BHP} where the non-linear case is studied.   Moreover, the jump structure may imply that the positive integers moments may not exist and  even exponential moments, a property which is strongly used in \cite{BJ, BJ1}. This implies that the strategy of the proof may change substantially  with respect to the Brownian case and new techniques and couplings are needed. On the other hand, under the absence of the Brownian component, the transition functions of  the SDEs may be quite irregular (this will be discussed below), therefore some conditions on the jump structure of the L\'evy process are needed.  We also  note that in the Brownian case the transition functions and the limiting distribution are Gaussian distributions and implicitly good bounds of the total variation distance  between them can be obtained. This property is strongly used in the papers \cite{BJ, BJ1},  and  is lost when a Poisson jump structure is added to the noise.

Under the assumption  that the jump structure of the L\'evy process $\xi$ has finite log-moment, 
as well as  some regularity conditions that we will specify below, we get the cut-off phenomenon for the family of OUL $(X^{(\epsilon)}, \epsilon>0)$ where $X^{(\epsilon)}$ satisfies \eqref{first}. Both conditions allow us to use a wide class of L\'evy processes which  include, in particular,  stable processes and Brownian motion.
Moreover, we also provide conditions on the limiting distribution for having the so-called profile cut-off, i.e. the abrupt convergence is described by a deterministic function. For instance, this condition is full-filled for  symmetric distributions in the unidimensional case.

It is important to note that in \cite{BJ1} a necessary and sufficient condition is obtained in order to get profile cut-off, something that seems not so easy to deduce when a Poisson jump structure is added to the noise. Indeed the  authors in \cite{BJ1} provided a characterisation of the profile cut-off using 
the invariance property of the standard Gaussian distribution by orthogonal matrices (see Lemma A$.2$ in \cite{BJ1}) and the upper and lower semi-continuity property for the total variation distance between Gaussian distributions (see Lemma A$.6$ in \cite{BJ1}). For tracking this issue, we impose  an ``invariance" type condition and provide an alternative proof that does not rely on  explicit computations of the total variation distance.

Finally, we are also interested  on the cut-off phenomenon for the superposition and the average processes of   OUL.  For the superposition process of OUL,  profile cut-off is also obtained and the profile function is given in terms of a self-decomposable distribution. Motivated by the work of Lachaud \cite{BL}, we study the cut-off phenomenon for the average process of OUL under the assumption that the driving L\'evy process is stable and prove that there is  profile cut-off with an explicit profile function, cut-off time  and window cut-off. 

\section{Preliminaries and main results}

\subsection{Cut-off phenomenon }Before we  introduce the concept of cut-off  formally, let us recall the notion of the total variation distance which will be our reference distance between probability distributions.
Given two probability measures $\mathbb{P}$ and $\mathbb{Q}$ which are defined in the same measurable space $\left(\Omega,\mathcal{F}\right)$,
the total variation distance between $\mathbb{P}$ and $\mathbb{Q}$ is given by
\[
\norm{\mathbb{P}-\mathbb{Q}}:=\sup\limits_{A\in \mathcal{F}}{|\mathbb{P}(A)-\mathbb{Q}(A)|}.\] 
For simplicity, in the case of two random variables $X$ and $Y$ defined on the same probability space $\left(\Omega,\mathcal{F}, \mathbb{P}\right)$ we use the following notation  for its total variation distance,
$$\norm{X-Y}:=\norm{\mathcal{L}(X)-\mathcal{L}(Y)}, $$
where $\mathcal{L}(X)$ and $\mathcal{L}(Y)$ denote the law under $\mathbb{P}$ of the random variables $X$ and $Y$, respectively. 
For a complete understanding of the total variation distance (normalised or not normalised), we refer to Chapter $2$ of the monograph of Kulik \cite{KUL}.

According to Barrera and Ycart \cite{BY}, the cut-off phenomenon  can
be expressed at three increasingly sharp levels.

\begin{definition}\label{definition}
The family $(X^{(\epsilon)}, \epsilon>0)$ possesses
\begin{itemize}
\item[i)] {\it cut-off} at times $(t_{\epsilon}, \epsilon>0)$,  if $t_{\epsilon}$ goes to $\infty$ accordingly as $\epsilon$ goes to $0$ and
\begin{eqnarray*}
\lim\limits_{\epsilon\rightarrow 0 }{d^{(\epsilon)}(ct_{\epsilon})}= \left\{ \begin{array}{lcc}
             1 &  \textrm{ if }  & 0 < c < 1, \\
             \\ 0 & \textrm{ if } & c>1. \\
             \end{array}
   \right.
\end{eqnarray*}
\item[ii)]  {\it window cut-off} at
$(\left(t_{\epsilon}, w_{\epsilon}\right), \epsilon>0)$, if $t_{\epsilon}$ goes to $\infty$ when $\epsilon$ goes to $0$, $w_{\epsilon}=o\left(t_{\epsilon}\right)$, and
\[
\lim\limits_{c \rightarrow -\infty}{\liminf\limits_{\epsilon\rightarrow 0}
{d^{(\epsilon)}(t_{\epsilon}+cw_{\epsilon})}}=1\qquad \textrm{and} \qquad  \lim\limits_{c \rightarrow \infty}{\limsup\limits_{\epsilon\rightarrow 0}
{d^{(\epsilon)}(t_{\epsilon}+cw_{\epsilon})}}=0.
\]
\item[iii)]  {\it profile cut-off} at
$(\left(t_{\epsilon}, w_{\epsilon}\right), \epsilon>0)$ with profile function $G$, if $t_{\epsilon}$ goes to $\infty$ when $\epsilon$ goes to $0$, $w_{\epsilon}=o\left(t_{\epsilon}\right)$,
\begin{eqnarray*}
 G(c):=\lim\limits_{\epsilon \rightarrow 0}{d^{(\epsilon)}(t_{\epsilon}+cw_{\epsilon})}
\end{eqnarray*} is well-defined for all $c\in \mathbb{R}$ and satisfies
\[
\lim\limits_{c \rightarrow -\infty}{G(c)}=1\qquad \textrm{and} \qquad \lim\limits_{c \rightarrow \infty}{G(c)}=0.
\]
\end{itemize}
\end{definition}
Observe that the cut-off times and the windows cut-off are deterministic and both  may  depend on the starting state of the process. Implicitly, the distance $d^{(\epsilon)}(t)$ also may depend on the starting state. Moreover, the cut-off times and windows cut-off may not be unique but, up to an equivalence relation, they are (see \cite{MY} for further details).
On the other hand, there are not to many examples where the profile can be determined explicitly, specially under the total variation distance. Actually explicit profiles are usually out of reach and normally  only  windows cut-off can be hoped for.

The cut-off phenomenon, under the total variation distance, is naturally associated to a switching phenomenon, {\it i.e.}, {\it all/nothing} or {\it $1$/$0$ behaviour} but it has the drawback of being used with other meanings in statistical mechanics and theoretical physics. Alternative names are {\it threshold phenomenon} and {\it abrupt convergence} (see Barrera et al. \cite{BY1}). The cut-off phenomenon can  be  also interpreted as a mixing time (see for instance Lubetzky and Sly \cite{LS} and/or Chapter $18$ of \cite{LPW})  or as a hitting time  (see for instance \cite{MY}). Both interpretations are equivalent for the total variation distance and separation distance, see Barrera and Ycart \cite{BY} and the references therein.

Let us exemplify the relevance of the cut-off phenomenon by the following simple and well-known example. Imagine that we would like to sample a probability distribution  by a Markov Chain Monte Carlo  method using an ergodic Markov chain  that has the desired distribution as its limiting distribution. Usually, it is not so difficult to construct such Markov chain  with the given properties. The more difficult problem is to determine how many steps are needed to converge to the limiting distribution within an acceptable error.  The cut-off phenomenon, in this setting, implies that there  exists an asymptotically optimal sufficient running time,  here denoted by  $T_\epsilon$, which  is asymptotically equivalent to the cut-off time $t_\epsilon$. Moreover, if there is a cut-off  time 
$t_\epsilon$ with window size $w_\epsilon$, then one gets the more precise result, that is to say,  that the optimal running time $T_\epsilon$ should satisfy $|T_\epsilon-t_\epsilon| = \mathcal{O}(w_\epsilon)$ as $\epsilon$ goes  to $0$. The crucial point here is that, if there is a cut-off, these relations hold for any desired fixed admissible error size whereas, if there is no cutoff, the optimal sufficient running time $T_\epsilon$ depends greatly of the desired admissible error
size. For further details we refer \cite{CSC}.

\subsection{The Ornstein-Uhlenbeck process and its invariant distribution}
Similarly to the diffusive case, OUL  have been widely studied since they appear in many areas of applied probability. This family of  processes  appears as a natural continuous time generalisation of random recurrence equations, as shown by de Haan and Karandikar \cite{HK} and has applications in mathematical finance (see for instance Kl\"uppelberg et al. \cite{KLM} and  Yor \cite{Yor}),  risk theory (see for instance Gjessing and Paulsen \cite{GP}), mathematical physics (see for instance Garbaczewski and Olkiewicz \cite{GO}) and random dynamical systems (see for instance  Friedman \cite{Friedman}).  From the distributional point of view, they have attracted a lot of attention since the limiting distribution, whenever it exists,  satisfies an operator self-decomposability property which in the unidimensional case turns out to be the so-called self-decomposability property, see for instance Sato and Yamazato \cite{SaYa} for further details.  Actually, any operator self-decomposable  distribution can be determined as the equilibrium distribution of an OUL, see for instance Sato and Yamazato \cite{SaYa} and Sato \cite{Sa, Sa1} for the self-decomposable case.

 Let  $d\ge 1$ and  $\xi=(\xi_t, t\geq 0)$ be a $\mathbb{R}^d$-valued L\'evy process, that is to say a c\`adl\`ag
process with independent and stationary increments, whose law, starting from $x\in \mathbb{R}^d$, is denoted  by $\mathbb{P}_x$,  with the understanding that $\mathbb{P}_{0} = \mathbb{P}$.  We also let  $|\cdot |$  and $\langle \cdot,\cdot \rangle$  be the Euclidean norm and the  standard inner product in $\mathbb{R}^d$, respectively.

It is well known that the law  of any L\'evy process  is characterised by its one-time transition probabilities. In particular, for all $z \in\mathbb{R}^d$, we have
\begin{equation*}
\mathbb{E}\Big[e^{i\langle z,\xi_t\rangle}\Big] = e^{ t \psi( z)},
\end{equation*}
where the characteristic exponent $\psi$  satisfies the so-called L\'evy-Khintchine formula
\begin{equation*}\label{expc}
\psi(z) = -\frac{1}{2}\langle z, \Sigma z\rangle +  i\langle a,z\rangle + \int_{\mathbb{R}^d}\left( e^{i \langle z,x\rangle} -1- i\langle z,x\rangle\mathbf{1}_{\{|x|\le 1\}}\right)\nu({\rm d}x),
\end{equation*}
$a\in\mathbb{R}^d$,
$\Sigma$ is a $d$-squared symmetric non-negative definite matrix and $\nu$ is a measure on $\mathbb{R}^d\backslash\{0\}$ satisfying the integrability condition
\[
\int_{\mathbb{R}^d}\left(1\wedge |x |^2\right)\nu({\rm d}x)<\infty.
\]
We take  $\epsilon>0$ and  recall that the associated OUL  $X^{(\epsilon)}$ is the  unique strong solution of the following linear SDE  in $\mathbb{R}^d$ given by
\begin{equation}\label{sdem}
\left\{
\begin{array}{r@{\;=\;}l}
\ud X^{(\epsilon)}_t & -Q X^{(\epsilon)}_t\ud t+\sqrt{\epsilon}\ud \xi_t \qquad \textrm{for}\quad t\geq 0, \\
X^{(\epsilon)}_0 & x_0\not=0,
\end{array}
\right.
\end{equation} 
where $Q$ is a $d$-squared real matrix whose eigenvalues has positive real parts. For simplicity, we denote the latter class of matrices by $\mathcal{M}^{+}(d)$. If the matrix  $Q$ is symmetric, we say that the process \eqref{sdem} is reversible since the vector field $F(x)=Qx$ can be written as the transpose of the gradient for the quadratic form $V(x)=x^{T}Qx/2$,  which can be thought as  potential energy. Here,  $x^T$ denotes the transpose of the vector $x$.
When the matrix $Q$ is non-symmetric, we say that the process \eqref{sdem} is non-reversible.
It is important to note that when we perturb a symmetric matrix, typically it becomes non-symmetric, in other words and  roughly speaking, most of the matrices on $\mathcal{M}^{+}(d)$ are non-symmetric.

The  SDE (\ref{sdem})  can be rewritten as
\[
X^{(\epsilon)}_t =e^{-tQ}x_0+ \sqrt{\epsilon}\int_{0}^t e^{-(t-s)Q}\ud \xi_s \qquad \textrm{for}\quad t\ge 0.
\]We denote by $\mathbf{P}_{x_0}$ for its law starting from $x_0$. It is known (see for instance Theorem  3.1 in Sato and Yamazato \cite{SaYa}) that the latter is an homogeneous Markov process with transition function $\mathtt{P}^{(\epsilon)}_t(x_0, B)=\mathbf{P}_{x_0}(X^{(\epsilon)}_t\in B)$, for $B\in \mathcal{B}(\mathbb{R}^d)$, satisfying
\begin{equation*}\label{care}
\int_{\mathbb{R}^d} e^{i\langle \lambda, y\rangle}\mathtt{P}^{(\epsilon)}_t(x_0, \ud y)=\exp\left\{i\langle x^T_0e^{-tQ^T },\lambda \rangle +\int_0^t \psi(\sqrt{\epsilon} e^{-sQ^T } \lambda)\ud s \right\}, \quad \lambda\in \mathbb{R}^d,
\end{equation*}
where $Q^T$ denotes the matrix transpose of $Q$.

By straightforward computations, we deduce that  the transition function ${\tt P}^{(\epsilon)}_t(x_0, \cdot)$ is infinitely divisible with generating triple $(a^{(\epsilon)}_t, \Sigma^{(\epsilon)}_t, \nu^{(\epsilon)}_t)$ given by
\begin{align*}
 \Sigma^{(\epsilon)}_t&=\epsilon \int_0^t e^{-sQ }\Sigma e^{-sQ^T } \ud s,\qquad \nu^{(\epsilon)}_t(B)=\int_0^t\nu(\sqrt{\epsilon}e^{sQ} B)\ud s \qquad \textrm{for} \quad B\in \mathcal{B}(\mathbb{R}^d) ,\\
 a^{(\epsilon)}_t &=e^{- tQ} x_0+\sqrt{\epsilon}\int_0^t e^{-sQ}a\ud s+\sqrt{\epsilon}\int_{\mathbb{R}^d}\nu(\ud y) \int_0^t e^{-sQ} y\left(\mathbf{1}_{\{|\sqrt{\epsilon}e^{-sQ} y|\le 1\}}-\mathbf{1}_{\{|y|\le1 \}}\right)  \ud s,
\end{align*}
where $e^{sQ} B=\{y\in \mathbb{R}^d: y=e^{sQ}x, x\in B \}$ (see for instance Theorem  $3.1$ in  \cite{SaYa}). Moreover, according to Masuda \cite{MA}, the process has a transition density which is infinitely differentiable and bounded (i.e. belongs to $\mathcal{C}^\infty_b$) if the ${\rm rank}(\Sigma)=d$ or the L\'evy measure $\nu$ satisfies  {\it Orey-Masuda's condition}, that is to say, if there exist constants $\alpha\in (0,2)$ and $c>0$ such that
\begin{equation}\label{Masuda'scond}
\int\limits_{\{z: |\langle v,z \rangle|\le 1\}}|\langle v,z \rangle|^2\nu(\ud z)\ge c|v|^{2-\alpha} 
\qquad \textrm{for any $v\in \mathbb{R}^d$ with $|v|\ge 1$.}
\end{equation}
A weaker assumption, on the L\'evy measure $\nu$, for the transition density  being  infinitely differentiable and bounded appears in Bodnarchuk and Kulyk \cite{BodK, BodK1}. Indeed, according to Theorem 1 in \cite{BodK1}  if 
\begin{equation}\label{BodKcond}
-\frac{1}{r^2\ln(r)}\inf_{\ell \in \mathbb{S}^{d-1}}\int_{\mathbb{R}^d} \Big(\langle z, \ell\rangle^2\land r^2\Big)\nu(\ud z)\to \infty
\qquad \textrm{as $r$ goes to $0$,}
\end{equation} 
where  $\mathbb{S}^{d-1}$ denotes the unit sphere in $\mathbb{R}^d$, then the transition density  belongs to $\mathcal{C}^\infty_b$. Actually in the one dimensional case, the following condition 
\begin{equation}\label{BodK1cond}
-\frac{1}{r^2\ln(r)}\int_{\mathbb{R}} \Big(z^2\land r^2\Big)\nu(\ud z) \to \infty \qquad \textrm{as $r$ goes to $0$,}
\end{equation} 
turns out to be a necessary and sufficient  for the  transition density  being in  $\mathcal{C}^\infty_b$ (see Theorem 1 in \cite{BodK}). We also point out that the previous condition differs from the so-called {\it Kallenberg's condition}, i.e.
\begin{equation}\label{kalle}
-\frac{1}{r^2\ln(r)}\int_{-r}^r z^2\nu(\ud z) \to \infty \qquad \textrm{as $r$ goes to $0$,}
\end{equation} 
which  is a necessary condition for the density of the L\'evy process $\xi$ being in $\mathcal{C}^\infty_b$ (see Section 5 in Kallenberg \cite{kal}). Indeed, the L\'evy measure $\nu =\sum_{n\ge 1} n\delta_{1/n!}$ satisfies  \eqref{BodK1cond} but not {\it Kallenberg's condition}. Actually for such example, we have the following unexpected behaviour which is  that  the distribution of the L\'evy process $\xi$ is singular but the transition density of its associated OUL process is in $\mathcal{C}^\infty_b$ (see Example 1 in \cite{BodK1}). In other words, the  drift given by the dynamics \eqref{sdem} may provide enough regularity to the transitions even if the distribution of the noise is singular.

It seems that we cannot expect a weaker condition for the  transition density being in   $\mathcal{C}^\infty_b$ than \eqref{BodKcond}, since according to Bodnarchuk and Kulyk \cite{BodK1}, the following condition 
\begin{equation}\label{borna}
-\frac{1}{r^2\ln(r)}\sup_{\ell \in \mathbb{S}^{d-1}}\int_{\mathbb{R}^d} \Big(\langle z, \ell\rangle^2\land r^2\Big)\nu(\ud z)\to \infty
\qquad \textrm{as $r$ goes to $0$,}
\end{equation}
is necessary for the existence of a  bounded continuous density.

Before we introduce the invariant distribution of the process $X^{(\epsilon)}$, we recall the notion of  {\it self-decomposability operator} 
of a distribution on $\mathbb{R}^d$. Let  $Q\in \mathcal{M}^+(d)$, then an  infinitely divisible distribution $\mu$ on $\mathbb{R}^d$ is called {\it $Q$-self-decomposable} if there exists a probability distribution $\eta_{t,Q}$ such that, for each $t\ge 0$,
\[
\widehat{\mu}(\lambda):=\int_{\mathbb{R}^d} e^{i\langle \lambda, z\rangle} \mu(\ud z)=\widehat{\mu}\left(e^{-tQ^T} \lambda\right)\widehat{\eta}_{t,Q}(\lambda), \qquad \lambda \in \mathbb{R}^d,
\]
where $\widehat{\eta}_{t,Q}$ denotes the characteristic function or Fourier transform of $\eta_{t,Q}$.
An infinitely divisible distribution $\mu$ on $\mathbb{R}^d$ which is $Q$-self-decomposable for some $Q\in \mathcal{M}^+(d)$ is called {\it operator self-decomposable}. If $d=1$, then the operator self-decomposability property reduces to self-decomposability. It is important to note that the support of any $Q$-self-decomposable distribution is unbounded except for delta distributions (see for instance Corollary 24.4 in Sato \cite{Sa}).

In the sequel, we  assume that the L\'evy processes $\xi$ satisfies the following log-moment condition
\begin{equation}\label{cond1}
\mathbb{E}\Big[\log(1\lor |\xi_1|)\Big]<\infty,
\end{equation}
where $a\lor b$ denotes the maximum between the numbers $a$ and $b$, which  is equivalent to
\[
\int_{D_1^c} \log|x|\nu(\ud x)<\infty,
\]
where $D_{r}:=\{z\in \mathbb{R}^d: |z|\le r\}$,  for $r\ge 0$ , and $D_{r}^{c}:=\mathbb{R}^d\setminus D_{r}$ (see  Theorem 25.3 in Sato \cite{Sa}). 
The log-moment condition \eqref{cond1} is  necessary and sufficient  for the existence of  a stationary distribution for the process $X^{(\epsilon)}$, here denoted by  $\mu^{(\epsilon)}$, and it satisfies
\[
\int_{\mathbb{R}^d} e^{i\langle \lambda, x\rangle }\mu^{(\epsilon)} (\ud x)= \exp\left\{\int_0^\infty \psi\Big(\sqrt{\epsilon}e^{-sQ^T} \lambda\Big) \ud s\right\}, \qquad \lambda \in \mathbb{R}^d,
\]
see for instance Theorems 4.1 and 4.2 in \cite{SaYa} (or Theorem 17.5 in \cite{Sa} for the case $Q=q {\rm I}_{\textrm{d}}$ for $q>0$ and where ${\rm I}_{\textrm{d}}$ denotes the identity matrix). Moreover, the distribution $\mu^{(\epsilon)}$ is  $Q$-self-decomposable which is determined by a triple $(a^{(\epsilon)}_\infty, \sigma^{(\epsilon)}_\infty, \nu^{(\epsilon)}_\infty)$ given by
\[
a^{(\epsilon)}_\infty= \sqrt{\epsilon}Q^{-1}a+\sqrt{\epsilon}\int_{\mathbb{R}^d}\nu(\ud y) \int_0^\infty e^{-sQ} y\left(\mathbf{1}_{\{|\sqrt{\epsilon}e^{-sQ} y|\le1\}}-\mathbf{1}_{\{|y|\le1 \}}\right)  \ud s,
\]
and
\[
\qquad  \Sigma^{(\epsilon)}_\infty=\epsilon \int_0^\infty e^{-sQ }\Sigma e^{-sQ^T } \ud s,\qquad
 \nu^{(\epsilon)}_\infty(B)=\int_0^\infty \nu(\sqrt{\epsilon}e^{sQ} B)\ud s \qquad \textrm{for} \quad B\in \mathcal{B}(\mathbb{R}^d).\]
In fact, Theorem 4.1 in \cite{SaYa} determines  the class of all of   $Q$-self-decomposable distributions as the class of all possible invariant distributions of OUL. According to Yamazato \cite{Ya} if $\mu^{(\epsilon)}$ is  non-degenerate  then $\mu^{(\epsilon)}$ is absolutely continuous with respect to the Lebesgue measure on $\mathbb{R}^d$. However, 
not so much  information about the regularity of the density  can be found in the literature, up to our knowledge.  In the one dimensional case, the distribution $\mu^{(\epsilon)}$ is self-decomposable and, if it is non-degenerate, then it is absolutely continuous with respect to the Lebesgue measure and its density is increasing on $(-\infty, \wp)$ and decreasing on $(\wp,\infty)$, where $\wp\in \mathbb{R}$ is known as the mode (see for instance Theorem 53.1 in \cite{Sa}).

Computing explicitly the density of the  invariant distribution $\mu^{(\epsilon)}$  is rather complicated even in the one dimensional case but in some specific examples we can say something about it. For instance, if the L\'evy process $\xi$ admits  a continuous density and $\mu^{(\epsilon)}$ has a smooth density then the latter can be determined explicitly, see Remark 2.3 in \cite{MA}. In the case when $\xi$ is a subordinator with finite jump measure (i.e.  $\nu(0,\infty)<\infty$), the asymptotic behaviour of its density   near $0$ can be  established (see for instance Theorem 53.6 in \cite{Sa}).

\subsection{Main results}

Recall that $d^{(\epsilon)}(t)$ denotes  the total variation distance between the distribution of $X^{(\epsilon)}_t$  and its invariant distribution $\mu^{(\epsilon)}$, that is to say
\[
d^{(\epsilon)}(t)=\norm{X^{(\epsilon)}_t-X^{(\epsilon)}_\infty} \qquad \textrm{for } \quad t\ge 0,
\]
where $X_\infty^{(\epsilon)}$ denotes the limit distribution of $X^{(\epsilon)}$ whose  law is given by $\mu^{(\epsilon)}$.

We also introduce the L\'evy process $\xi^\natural$ as follows $\xi^\natural_t:=\xi_t-at$, for $t\ge 0$,  and its associated 
exponential functional $I^\natural=(I^\natural_t, t\ge 0)$ which is defined by
\[
I^\natural_t=\int_0^t e^{-(t-s)Q}\ud \xi^\natural_s \qquad \textrm{for} \quad t\ge 0.
\]
We observe that  its limiting distribution is well-defined under the log-moment condition \eqref{cond1}, here denoted by $I^\natural_\infty$.
We also denote by $\mu^\natural_t$ the distribution of $I^\natural_t$, for $t\ge 0$, and $\mu^\natural_\infty$ the distribution of $I^\natural_\infty$. For the sequel, we assume that for any $t>0$,
 \[
 \hspace{-1cm}\mathrm{\bf (H)}\hspace{2cm}\widehat{\mu^\natural}_t(\cdot) \quad\textrm{ is integrable}\quad \textrm{and}\quad\lim_{R \to \infty} \sup_{s> t_0(R)}\int_{D^c_R}|\widehat{\mu^\natural}_s(\lambda)| \ud \lambda =0, 
 \]
 where    $\widehat{\mu^\natural}_t$ denotes the characteristic function of  $\mu^\natural_t$,
  $D_{R}^{c}:=\mathbb{R}^d\setminus D_{R}$ and $t_0(R)$ is  positive and goes to $\infty$ as $R$ increases.  
The integrability  condition on  $\widehat{\mu^\natural}_t$ implies that $I^\natural_t$, for $t>0$, has a continuous density $f_t(x)$ that goes to $0$ as $|x|$ goes to $\infty$ (see for instance  Proposition 28.1 in \cite{Sa}). If the log-moment condition \eqref{cond1} also holds  then $I^\natural_\infty$ also has a continuous densities  $f_\infty(x)$ that goes to $0$, as $|x|$ goes to $\infty$. It is not so difficult to deduce that the same property holds for the density of $X^{(1)}_t$ and its invariant distribution $\mu^{(1)}$. Indeed, the latter holds since 
$I^\natural_t$ and $X^{(1)}_t$  differ only in the drift term and 
\[
 \left|\widehat{\mu^{(1)}}(\lambda)\right|\le \left|\widehat{\mu^\natural}_t(\lambda)\right| \qquad \textrm{for any}\quad t>0,
\]
where $|z|$ denotes the modulus of $z\in \mathbb{C}$. For simplicity, we use the same notation  between the Euclidian norm and the modulus for  the complex numbers. 
The second condition in {\bf (H)} guarantees the convergence, under the total variation distance, of $I^\natural_t$ towards $I^\natural_\infty$, as $t$ increases.

Recall that  the matrix $Q$ belongs to $\mathcal{M}^+(d)$, i.e. it has eigenvalues with positive real part.   The following lemma, whose general proof can be found in Barrera and Jara \cite{BJ1} (see Lemma B.1) and we state it here for the sake of completeness, provides the asymptotic behaviour of $e^{-tQ}$, for large $t$. Such asymptotic behaviour is necessary for determining the cut-off times.

\begin{lemma}\label{jara}
Let $Q\in \mathcal{M}^+(d)$.
For any $x_0 \in \mathbb{R}^d\setminus\{0\}$ there exist $\gamma:=\gamma(x_0)>0$,
$\ell:=\ell(x_0) , m:=m(x_0)  \in  \{1,\ldots, d\}$, $\theta_1:=\theta_1(x_0),\dots,\theta_{m}:=\theta_m(x_0) \in [0,2\pi)$ and $v_1:=v_1(x_0),\dots,v_{m}:=v_m(x_0) \in  \mathbb{C}^d$ linearly independent such that
\[
\lim_{t \to \infty} \left |\frac{e^{\gamma t}}{t^{\ell-1}} e^{- tQ}x_0 - \sum_{k=1}^{m} e^{i  t\theta_k} v_k\right | =0.
\]
Moreover,
\[
0<\liminf_{t\rightarrow \infty}\left|\sum_{k=1}^{m} e^{i  t\theta_k} v_k\right|
\leq 
\limsup_{t\rightarrow \infty}\left|\sum_{k=1}^{m} e^{i  t\theta_k} v_k\right|\leq 
\sum_{k=1}^{m}  |v_k|.
\]
\end{lemma}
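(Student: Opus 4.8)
The plan is to read off the large-time behaviour of $e^{-tQ}x_0$ from the (generalized-eigenspace) Jordan decomposition of $Q$ and then analyse the resulting trigonometric polynomial. Let $\lambda_1,\dots,\lambda_r$ be the distinct eigenvalues of $Q$, write $\lambda_j=a_j+ib_j$ with $a_j>0$, and decompose $\mathbb{C}^d=\bigoplus_{j=1}^{r}V_j$ into the corresponding generalized eigenspaces, so that $Q$ acts on $V_j$ as $\lambda_j\,\mathrm{I}+N_j$ with $N_j$ nilpotent of index at most $\dim V_j$. Writing $x_0=\sum_{j=1}^{r}x_0^{(j)}$ with $x_0^{(j)}\in V_j$, one has the exact identity
\[
e^{-tQ}x_0=\sum_{j=1}^{r}e^{-a_jt}\,e^{-ib_jt}\sum_{p\ge 0}\frac{(-t)^{p}}{p!}\,N_j^{p}x_0^{(j)},\qquad t\ge 0 .
\]

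First I would isolate the dominant block. Set $\gamma:=\min\{a_j:\ x_0^{(j)}\neq 0\}$, which is positive since $x_0\neq 0$; let $\mathcal{J}:=\{j:\ a_j=\gamma,\ x_0^{(j)}\neq 0\}$, and for $j\in\mathcal{J}$ put $\ell_j-1:=\max\{p\ge 0:\ N_j^{p}x_0^{(j)}\neq 0\}$, which is well defined (the $p=0$ term equals $x_0^{(j)}\neq 0$) and satisfies $\ell_j\le\dim V_j\le d$. Finally set $\ell:=\max_{j\in\mathcal{J}}\ell_j\in\{1,\dots,d\}$ and $\mathcal{J}':=\{j\in\mathcal{J}:\ \ell_j=\ell\}\neq\emptyset$. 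Multiplying the identity above by $e^{\gamma t}t^{-(\ell-1)}$ and inspecting it block by block: for $a_j>\gamma$ the factor $e^{(\gamma-a_j)t}$ decays exponentially while the remaining factors grow at most polynomially, so that block vanishes; for $j\in\mathcal{J}\setminus\mathcal{J}'$ only monomials of degree $\le\ell_j-1<\ell-1$ occur, hence vanish after dividing by $t^{\ell-1}$; and for $j\in\mathcal{J}'$ the monomials of degree $<\ell-1$ again vanish, leaving only the top one, $p=\ell-1$, contributing $e^{-ib_jt}\,\dfrac{(-1)^{\ell-1}}{(\ell-1)!}N_j^{\ell-1}x_0^{(j)}$. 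Enumerating $\mathcal{J}'=\{j_1,\dots,j_m\}$ (so $1\le m\le r\le d$), and setting $\theta_k:=-b_{j_k}$ and $v_k:=\dfrac{(-1)^{\ell-1}}{(\ell-1)!}N_{j_k}^{\ell-1}x_0^{(j_k)}\neq 0$, this gives exactly $\lim_{t\to\infty}\bigl|\,e^{\gamma t}t^{-(\ell-1)}e^{-tQ}x_0-\sum_{k=1}^{m}e^{it\theta_k}v_k\,\bigr|=0$.

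It then remains to prove the two bounds on $g(t):=\sum_{k=1}^{m}e^{it\theta_k}v_k$. The estimate $\limsup_{t\to\infty}|g(t)|\le\sum_{k=1}^{m}|v_k|$ is just the triangle inequality (and holds for $\sup_{t}|g(t)|$). The lower bound $\liminf_{t\to\infty}|g(t)|>0$ is the only genuinely delicate point; the key structural fact is that $v_1,\dots,v_m$ are linearly independent, since each $v_k$ is a nonzero element of $V_{j_k}$ and generalized eigenspaces attached to distinct eigenvalues are in direct sum. Then the Gram matrix $G:=\bigl(\langle v_k,v_{k'}\rangle\bigr)_{1\le k,k'\le m}$ is Hermitian positive definite, so with $u(t):=(e^{it\theta_1},\dots,e^{it\theta_m})^{T}\in\mathbb{C}^m$, each of whose coordinates has unit modulus, we get
\[
|g(t)|^{2}=u(t)^{*}G\,u(t)\ge\lambda_{\min}(G)\,|u(t)|^{2}=m\,\lambda_{\min}(G)>0\qquad\text{for every }t\ge 0 .
\]
This yields $\inf_{t\ge 0}|g(t)|>0$, which is stronger than the claimed $\liminf$, and finishes the proof.

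The main obstacle is precisely this last lower bound. The naive time-average identity $\frac{1}{T}\int_0^T|g(t)|^{2}\,\ud t\to\sum_{k}|v_k|^{2}$ only controls a mean, not the pointwise infimum (indeed for $g(t)=v+e^{it}v$ the norm $|g|$ returns to $0$). What rescues the argument here is that the vectors $v_k$ automatically sit in distinct generalized eigenspaces, hence are linearly independent, which turns $|g|^{2}$ into a positive-definite quadratic form restricted to the $m$-torus and therefore bounds it away from $0$ uniformly in $t$.
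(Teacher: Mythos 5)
Your proof is correct and, notably, more complete than what actually appears in this paper: the paper states Lemma \ref{jara} and explicitly defers its proof to Lemma B.1 of \cite{BJ1}, supplying in the Appendix only the real-spectrum case (Proposition \ref{lyapunov}), where the trigonometric sum collapses to a constant vector and the lower bound on $|g(t)|$ is trivial. Your Jordan/generalized-eigenspace decomposition and block-by-block extraction of the dominant term is exactly the computation underlying Proposition \ref{lyapunov}, extended to complex eigenvalues. The genuinely new ingredient you supply is the treatment of the lower bound $\liminf_{t\to\infty}|g(t)|>0$: your observation that the Gram matrix $G=(\langle v_k,v_{k'}\rangle)$ is Hermitian positive definite (because the $v_k$ live in distinct generalized eigenspaces and are therefore linearly independent) together with $|u(t)|^2=m$ gives the uniform bound $|g(t)|^2\ge m\,\lambda_{\min}(G)>0$ for all $t\ge 0$, which is strictly stronger than the stated $\liminf$ and is a clean alternative to whatever quasi-periodicity or compactness argument \cite{BJ1} uses. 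One minor discrepancy with the statement as printed: your $\theta_k=-b_{j_k}$ is the imaginary part of an eigenvalue and need not lie in $[0,2\pi)$; the paper's restriction $\theta_k\in[0,2\pi)$ cannot be achieved in general (since $e^{it\theta}$ as a function of $t$ is not invariant under $\theta\mapsto\theta+2\pi$), and should be read as $\theta_k\in\mathbb{R}$. This is a slip in the paper's statement, not in your argument.
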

The numbers $\{\lambda \pm \theta_k, k=1,\ldots, m\}$ are eigenvalues of the matrix $Q$ and the vectors $\{v_k, k=1,\ldots, m\}$ are elements of the Jordan decomposition of $Q$. For a better understanding of the asymptotic behaviour   of $e^{- tQ}$ and its role,  we provide the proof of  Lemma \ref{jara} in the case when all the eigenvalues of $Q$ are positive real numbers, see Proposition \ref{lyapunov} in the Appendix.

Now, we present the main result of this paper using the same notation as in the previous lemma.

\begin{theorem}\label{teo2}
Let $Q\in \mathcal{M}^+(d)$ and  $x_0\in \mathbb{R}^d\setminus\{0\}$. Assume that  the log-moment condition \eqref{cond1} and  {\bf (H)} hold;  and  take $0<\epsilon\ll 1$ such that 
\begin{equation}\label{twcut-off}
t_\epsilon:=(2\gamma)^{-1}\ln\left(\nicefrac{1}{\epsilon}\right)+
\frac{\ell-1}{\gamma}
\ln\left(\ln\left(\nicefrac{1}{\epsilon}\right)\right)>0\quad
\mathrm{  and  }\quad
w_\epsilon:=\gamma^{-1}+o_\epsilon(1)>0,
\end{equation} 
where $\lim\limits_{\epsilon\rightarrow 0}o_\epsilon(1)=0$, then
\begin{itemize}
\item[i)]  if $\theta_k=0$ for every $k\in \{1,\ldots,m\}$, 
the  family of  OUL processes  $(X^{(\epsilon)}, \epsilon>0)$ 
has profile cut-off  with cut-off time $t_\epsilon$, time window $w_\epsilon$ and profile function $G_{x_0}:\mathbb{R}\rightarrow [0,1]$ given by  
\[
\begin{split}
G_{x_0}(c):=\lim\limits_{\epsilon\rightarrow 0}d^{(\epsilon)}(t_\epsilon+cw_\epsilon)
=
\norm{\left(\left(2\gamma\right)^{1-\ell}e^{-c}v(x_0)+I^{\natural}_\infty\right)-I^\natural_\infty},
\end{split}
\]
for every $c\in \mathbb{R}$, where $v(x_0):=\sum_{k=1}^{m} v_k$.  In addition, the profile function $G_{x_0}$ satisfies
\[
\lim\limits_{c \rightarrow -\infty}{G_{x_0}(c)}=1\qquad \textrm{and} \qquad \lim\limits_{c \rightarrow \infty}{G_{x_0}(c)}=0.
\]
\item[ii)] 
if $\theta_k\not=0$ for some $k\in \{1,\ldots,m\}$,  the  family of  OUL processes  $(X^{(\epsilon)}, \epsilon>0)$ 
has window cut-off with cut-off time $t_\epsilon$ and time window $w_\epsilon$.
\end{itemize}
\end{theorem}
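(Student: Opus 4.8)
The plan is to reduce the $\epsilon$-dependent problem to a single scaling limit and then analyze the convergence of total variation distances. First I would use the scaling relation inherent in the linear SDE \eqref{sdem}: the process $X^{(\epsilon)}_t$ has the same law as $e^{-tQ}x_0 + \sqrt{\epsilon}\,\widetilde I_t$ where $\widetilde I_t = \int_0^t e^{-(t-s)Q}\ud\xi_s$, so that by the translation and scaling behaviour of total variation distance, $d^{(\epsilon)}(t) = \norm{(e^{-tQ}x_0 + \sqrt{\epsilon}\,\widetilde I_t) - \sqrt{\epsilon}\,\widetilde I_\infty}$. Writing things in terms of $\xi^\natural$ (which only shifts the deterministic drift, hence does not change this total variation distance since the drift is absorbed into the mean shift), the task becomes to compute $\lim_{\epsilon\to 0}\norm{(\epsilon^{-1/2}e^{-t_\epsilon Q}x_0 + I^\natural_{t_\epsilon}) - I^\natural_{t_\epsilon}}$ evaluated along $t = t_\epsilon + cw_\epsilon$, after factoring out $\sqrt{\epsilon}$.

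Next I would plug in the explicit cut-off time. With $t = t_\epsilon + cw_\epsilon$ and $w_\epsilon = \gamma^{-1} + o_\epsilon(1)$, Lemma \ref{jara} gives $e^{-tQ}x_0 \sim \frac{t^{\ell-1}}{e^{\gamma t}}\sum_k e^{it\theta_k}v_k$; substituting $\gamma t_\epsilon = \tfrac12\ln(1/\epsilon) + (\ell-1)\ln\ln(1/\epsilon)$ and $\gamma c w_\epsilon \approx c$ one finds $\epsilon^{-1/2} e^{-t_\epsilon Q}x_0 \to (2\gamma)^{1-\ell}v(x_0)$ up to the oscillating factor, and the extra $cw_\epsilon$ contributes the $e^{-c}$ and the oscillating factor $\sum_k e^{i(t_\epsilon+cw_\epsilon)\theta_k}v_k$ in place of $\sum_k e^{it_\epsilon\theta_k}v_k$. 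This is precisely where the two cases split: if all $\theta_k = 0$ the vector $\epsilon^{-1/2}e^{-(t_\epsilon+cw_\epsilon)Q}x_0$ converges to the fixed vector $(2\gamma)^{1-\ell}e^{-c}v(x_0)$; if some $\theta_k\neq 0$ the direction keeps rotating, there is no pointwise limit, but the norm stays bounded between the strictly positive $\liminf$ and the finite $\limsup$ from Lemma \ref{jara}, which after rescaling by $e^{-c}$ still tends to $0$ as $c\to\infty$ and forces the distance to $1$ as $c\to -\infty$.

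The analytic heart of the argument is a continuity statement: if $a_\epsilon \to a$ in $\mathbb{R}^d$ and $I^\natural_{s_\epsilon} \Rightarrow I^\natural_\infty$ with $s_\epsilon\to\infty$, then $\norm{(a_\epsilon + I^\natural_{s_\epsilon}) - I^\natural_{s_\epsilon}} \to \norm{(a + I^\natural_\infty) - I^\natural_\infty}$. I would prove this via the densities: hypothesis {\bf (H)} guarantees that $I^\natural_{s}$ has a continuous density $f_s$ converging (uniformly, thanks to the uniform tail control of the characteristic functions for $s$ large and the Fourier inversion formula) to the continuous density $f_\infty$ of $I^\natural_\infty$; then $\norm{(a+I^\natural_s)-I^\natural_s} = \tfrac12\int_{\mathbb{R}^d}|f_s(x-a) - f_s(x)|\ud x$, and one passes to the limit using the uniform convergence $f_{s_\epsilon}\to f_\infty$ on compacts together with the uniform integrability of the tails (again from {\bf (H)}) to handle the region $|x|$ large, plus continuity of $a\mapsto f_\infty(\cdot - a)$ in $L^1$. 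For case ii) one does not have a single limiting vector, so instead I would show directly that $\liminf_{c\to-\infty}\liminf_{\epsilon\to 0} d^{(\epsilon)}(t_\epsilon+cw_\epsilon) = 1$ by noting that $|a_\epsilon|\to\infty$ along such sequences (since $e^{-c}\liminf|\sum e^{i\theta_k}v_k|\to\infty$) and that $\norm{(a+Y)-Y}\to 1$ as $|a|\to\infty$ for any fixed $\mathbb{R}^d$-valued $Y$ with an absolutely continuous law, uniformly over the relevant family; and symmetrically $\limsup_{c\to\infty}\limsup_\epsilon d^{(\epsilon)} = 0$ because $|a_\epsilon|\to 0$.

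The main obstacle I anticipate is making the density convergence $f_{s_\epsilon}\to f_\infty$ genuinely uniform on compact sets while simultaneously controlling the tails uniformly in $\epsilon$, since $s_\epsilon = t_\epsilon + cw_\epsilon$ depends on $\epsilon$ and drifts to infinity — this is exactly what the second clause of {\bf (H)}, with its uniform bound $\sup_{s>t_0(R)}\int_{D_R^c}|\widehat{\mu^\natural}_s|\ud\lambda \to 0$, is designed to deliver, but stitching it together with Scheffé-type arguments and the $L^1$-continuity of translation requires care. A secondary subtlety is verifying that replacing $\xi$ by $\xi^\natural$ genuinely leaves $d^{(\epsilon)}(t)$ unchanged: the drift difference between $X^{(\epsilon)}_t$ and $\sqrt\epsilon I^\natural_t$ and between $X^{(\epsilon)}_\infty$ and $\sqrt\epsilon I^\natural_\infty$ must be shown to be the \emph{same} deterministic vector (asymptotically, or exactly), so that it cancels inside the total variation distance; this follows from the explicit triples for $a^{(\epsilon)}_t$ and $a^{(\epsilon)}_\infty$ recorded above, but should be stated cleanly.
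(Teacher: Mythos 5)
Your proposal follows essentially the same strategy as the paper: split the total variation distance into a deterministic drift contribution (governed by Lemma~\ref{jara}) and a random part, use hypothesis \textbf{(H)} and Fourier inversion to get uniform control of the densities of $I^\natural_s$ for large $s$, and pass to the limit via Scheff\'e/$L^1$-continuity of translation. You also correctly identify Lemma~\ref{a5} as the ingredient that drives $d^{(\epsilon)}\to 1$ for $c\to-\infty$, and the subsequence argument via Bolzano--Weierstrass (through the basin of attraction of the oscillating term $\sum_k e^{it\theta_k}v_k$) for case~(ii). The organization differs slightly: the paper introduces an explicit auxiliary metric $D^{(\epsilon)}(t)=\norm{(\epsilon^{-1/2}e^{-tQ}x_0+I^\natural_\infty)-I^\natural_\infty}$ and an error term $R(t)=\norm{(C_t+I^\natural_t)-(C_\infty+I^\natural_\infty)}$, proves $|d^{(\epsilon)}-D^{(\epsilon)}|\le R(t)$ by triangle inequality, and treats the two limits separately; you fold both into a single continuity lemma. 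Both routes work.

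One point to tighten, however: your reduction ``the task becomes to compute $\lim_{\epsilon\to 0}\norm{(\epsilon^{-1/2}e^{-t_\epsilon Q}x_0 + I^\natural_{t_\epsilon}) - I^\natural_{t_\epsilon}}$'' is not an identity. After removing the drift, $d^{(\epsilon)}(t)$ equals $\norm{(\epsilon^{-1/2}e^{-tQ}x_0+(C_t-C_\infty)+I^\natural_t)-I^\natural_\infty}$, which has $I^\natural_t$ on one side and $I^\natural_\infty$ on the other --- these are \emph{not} the same random variable, and the drifts $C_t$, $C_\infty$ cancel only in the limit $t\to\infty$. The quantity you propose to compute, $\norm{(a_\epsilon+I^\natural_{s_\epsilon})-I^\natural_{s_\epsilon}}$, is a proxy that is close to $d^{(\epsilon)}$ only once one also shows $\norm{I^\natural_{s_\epsilon}-I^\natural_\infty}\to 0$ (which absorbs both the $I^\natural_t$ vs.\ $I^\natural_\infty$ discrepancy and the residual $C_t-C_\infty$). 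You do have the tools to prove that convergence (it is exactly the paper's Proposition~\ref{jcppp11}), and you allude to it via the uniform Fourier control, but the step should be made explicit rather than absorbed into a claim of equality. With that repair the argument is complete and matches the paper's proof in substance.
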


Recall that the cut-off times $t_\epsilon$ and the windows cut-off $w_\epsilon$ depend on the initial condition. We omit such dependence for simplicity of exposition. It is also important to note that when  the initial condition $x_0=0$, we have $X^{(\epsilon)}_t=\sqrt{\epsilon}X^{(1)}_t$ for  $t\geq 0$, and $X^{(\epsilon)}_\infty=\sqrt{\epsilon}X^{(1)}_\infty$.
From Lemma \ref{sca} part (ii) (see Appendix), we obtain
\[
\norm{X^{(\epsilon)}_t-X^{(\epsilon)}_\infty}=
\norm{X^{(1)}_t-X^{(1)}_\infty} \qquad \textrm{ for  }\quad t\geq 0,
\]
that is to say  the  cut-off phenomenon does not occur. Moreover, Lemma \ref{jara} does not hold if $x_0=0$. For that reason, we  assume  that $x_0\not=0$. 

We also observe  that the profile function $G_{x_0}$, when it exists,  depends on the initial condition $x_0$ and is given in terms of 
the total variation distance between two $Q$-self-decomposable distributions  that do not depend on the drift term of the underlying L\'evy process since such part is deterministic. The previous observation is very interesting since most of the examples that appears in the literature (mainly for Markov chains such as the random walk on the hypercube) that exhibits profile cut-off are given in terms of the Gauss error function. Up to our knowledge, the only examples  that exhibit profile cut-off and  do not fulfill  the previous property are the  top-to-random
shuffle and the transposition shuffle for which the important statistic is
the number of fixed points which behaves like a Poisson random variable, see Lacoin \cite{LAC} and the references therein.

We also point-out that  part (i) of our main result includes the case when $Q$ has real eigenvalues which is easier to understand. Indeed,  denote by   $\gamma_1\le \gamma_2 \le \cdots \le\gamma_d$ for the eigenvalues associated to $Q$. If $Q$ is a symmetric matrix (or that the process \eqref{sdem} is reversible) with different eigenvalues then we can characterise explicitly $\gamma, \ell$ and $v$ in a very simple way using  the celebrated Spectral Theorem. In other words,
there exists an orthonormal basis $\{v_1,v_2,\ldots, v_d\}$ of $\mathbb{R}^d$ for which
\[
e^{-tQ}x_0=\sum\limits_{j=1}^{d}{e^{-\gamma_j t}\langle x_0,v_j\rangle v_j} \quad \textrm{for}\quad t\geq 0.
\]
Define
$\tau(x_0):=\min\{k\in\{1,\ldots,d\}:\langle x_0,v_k\rangle \not=0\}$ 
 and take $\gamma=\gamma_{\tau(x_0)}$. Then
\[
e^{\gamma t}e^{-tQ }x_0=\langle x_0,v_{\tau(x_0)}\rangle v_{\tau(x_0)}+\sum\limits_{j=\tau(x_0)+1}^{d}{e^{-(\gamma_j-\gamma) t}\langle x_0,v_j\rangle v_j} \quad \textrm{for}\quad t\geq 0,
\]
with the understanding  that if $\tau(x_0)=d$, then the second term of the right-hand side of the above identity equals 0.
Consequently,
\[
\lim\limits_{t\rightarrow \infty}e^{\gamma t}e^{-tQ }x_0=\langle x_0,v_{\tau(x_0)}\rangle v_{\tau(x_0)}\in \mathbb{R}^d\setminus\{{0}\}.
\]
If $Q$ is still symmetric but some of the eigenvalues may repeat, the values of $\gamma, \ell$ and $v$ can also be determined using the matrix  diagonalisation method. For more details see Proposition \ref{lyapunov}, part (i) in the Appendix.

In the particular case when  $d=1$ and    conditions \eqref{cond1}  and  {\bf (H)} are satisfied, we always have profile cut-off for the OUL process.
Moreover, we have that  $\mu^{(1)}$ and $I^\natural_\infty$ are self-decomposable (see Theorem $17.5$ in \cite{Sa}).

For a general $Q\in \mathcal{M}^{+}(d)$ and since part (ii) of Theorem \ref{teo2}  only implies   window cut-off for the  family of  OUL processes  $(X^{(\epsilon)}, \epsilon>0)$, then a natural question arises: \textit{Are there cases where profile cut-off exist for general $Q$?}  There is an affirmative  answer to  this question  which depends on   the following invariance property,
\begin{equation}
\label{invarianza1}
\norm{(v_1+I^{\natural}_\infty)-I^{\natural}_\infty}=\norm{(v_2+I^{\natural}_\infty)-I^{\natural}_\infty}
\end{equation}
for any $v_1,v_2\in \mathbb{R}^d$ such that $|v_1|=|v_2|$. 

\begin{corollary}\label{pt}
Let $Q\in \mathcal{M}^+(d)$ and  $x_0\in \mathbb{R}^d\setminus\{0\}$. Assume that  the log-moment condition \eqref{cond1},  {\bf (H)}  and  the invariance property \eqref{invarianza1} hold  and we take $0<\epsilon\ll 1$ such that 
$t_\epsilon$ and $w_\epsilon$ are defined as in \eqref{twcut-off}. If 
\[\left|\sum\limits_{k=1}^{m}e^{i\theta_k t}v_k\right|  \textrm{ is constant},\]
then there is profile cut-off for the  family of  OUL processes  $(X^{(\epsilon)}, \epsilon>0)$.\end{corollary}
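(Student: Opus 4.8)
The plan is to refine the proof of Theorem \ref{teo2}, isolating the single place where the oscillation of the Jordan phases $\theta_k$ forces merely window cut-off in part (ii), and observing that the two extra hypotheses annihilate it. Writing $\xi=\xi^\natural+(\cdot)a$ and using that the total variation distance is invariant under a common deterministic translation and under the invertible scaling $z\mapsto\epsilon^{-1/2}z$ (Lemma \ref{sca}), one obtains, exactly as in the proof of Theorem \ref{teo2},
\[
d^{(\epsilon)}(t)=\norm{\bigl(a_\epsilon(t)+I^\natural_t\bigr)-I^\natural_\infty},\qquad a_\epsilon(t):=\epsilon^{-1/2}e^{-tQ}x_0-Q^{-1}e^{-tQ}a\in\mathbb{R}^d .
\]
Since $\norm{(a+I^\natural_t)-I^\natural_\infty}$ and $\norm{(a+I^\natural_\infty)-I^\natural_\infty}$ differ by at most $\norm{I^\natural_t-I^\natural_\infty}$, the second part of {\bf (H)} gives $d^{(\epsilon)}(t_\epsilon+cw_\epsilon)=\norm{(a_\epsilon(t_\epsilon+cw_\epsilon)+I^\natural_\infty)-I^\natural_\infty}+o_\epsilon(1)$ for each fixed $c$, because $t_\epsilon+cw_\epsilon\to\infty$. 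Moreover, the choice \eqref{twcut-off} yields $\epsilon^{-1/2}(t_\epsilon+cw_\epsilon)^{\ell-1}e^{-\gamma(t_\epsilon+cw_\epsilon)}\to(2\gamma)^{1-\ell}e^{-c}$, so that Lemma \ref{jara}, together with $Q^{-1}e^{-tQ}a\to0$, gives
\[
\bigl|a_\epsilon(t_\epsilon+cw_\epsilon)\bigr|\;\longrightarrow\;(2\gamma)^{1-\ell}e^{-c}\,\rho,\qquad \rho:=\Bigl|\textstyle\sum_{k=1}^{m}e^{i\theta_k t}v_k\Bigr|,
\]
where $\rho$ is a well-defined and, by the lower bound in Lemma \ref{jara}, strictly positive constant under the hypothesis of the corollary.

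The new ingredient is that the invariance property \eqref{invarianza1}, applied to the genuine vector $a_\epsilon(t_\epsilon+cw_\epsilon)\in\mathbb{R}^d$, allows us to replace it by $|a_\epsilon(t_\epsilon+cw_\epsilon)|\,\mathbf{e}$ for an arbitrarily fixed unit vector $\mathbf{e}\in\mathbb{S}^{d-1}$ without changing the total variation distance:
\[
\norm{\bigl(a_\epsilon(t_\epsilon+cw_\epsilon)+I^\natural_\infty\bigr)-I^\natural_\infty}=\norm{\bigl(|a_\epsilon(t_\epsilon+cw_\epsilon)|\,\mathbf{e}+I^\natural_\infty\bigr)-I^\natural_\infty}.
\]
Under {\bf (H)} and \eqref{cond1} the law $\mu^\natural_\infty$ has a continuous bounded density vanishing at infinity, hence $v\mapsto\norm{(v+I^\natural_\infty)-I^\natural_\infty}$ is continuous on $\mathbb{R}^d$ (continuity of translation in $L^1$); combining this with the convergence of $|a_\epsilon(t_\epsilon+cw_\epsilon)|$ displayed above gives that
\[
G_{x_0}(c):=\lim_{\epsilon\to0}d^{(\epsilon)}(t_\epsilon+cw_\epsilon)=\norm{\bigl((2\gamma)^{1-\ell}e^{-c}\rho\,\mathbf{e}+I^\natural_\infty\bigr)-I^\natural_\infty}
\]
is well-defined for every $c\in\mathbb{R}$ and does not depend on the choice of $\mathbf{e}$.

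It remains to check the boundary behaviour $G_{x_0}(c)\to1$ as $c\to-\infty$ and $G_{x_0}(c)\to0$ as $c\to+\infty$. The second limit follows from the continuity just used, since $(2\gamma)^{1-\ell}e^{-c}\rho\,\mathbf{e}\to0$; the first from the fact that the density of $I^\natural_\infty$ vanishes at infinity, so that shifting it by a vector of diverging norm drives the total variation distance to $1$ — these are the very estimates already established in the proof of Theorem \ref{teo2}(i), which apply verbatim with $v(x_0)$ replaced by $\rho\,\mathbf{e}$. Alternatively, part (ii) of Theorem \ref{teo2} already provides window cut-off at $(t_\epsilon,w_\epsilon)$, i.e. $\lim_{c\to-\infty}\liminf_{\epsilon\to0}d^{(\epsilon)}(t_\epsilon+cw_\epsilon)=1$ and $\lim_{c\to+\infty}\limsup_{\epsilon\to0}d^{(\epsilon)}(t_\epsilon+cw_\epsilon)=0$, and once $G_{x_0}$ is known to exist these are precisely the two required limits. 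Hence the family $(X^{(\epsilon)},\epsilon>0)$ has profile cut-off at $(t_\epsilon,w_\epsilon)$ with profile $G_{x_0}$. The only step that is not purely mechanical is the reduction in the first displayed identity together with the control of the error $o_\epsilon(1)$ along $t=t_\epsilon+cw_\epsilon$; but this is exactly the analysis carried out in the proof of Theorem \ref{teo2}, and the corollary follows from it simply by noting that constancy of $|\sum_k e^{i\theta_k t}v_k|$ together with \eqref{invarianza1} converts the $\liminf$/$\limsup$ dichotomy of part (ii) into a bona fide limit.
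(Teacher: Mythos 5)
Your proof is correct, and it reaches the same conclusion as the paper by the same two essential ingredients (constancy of $\bigl|\sum_{k}e^{i\theta_k t}v_k\bigr|$ plus the invariance property \eqref{invarianza1}), but the mechanism is genuinely different. The paper's proof is a five-line corollary of the machinery already built for Theorem \ref{teo2}(ii): it takes the subsequential limits $\hat{v}(x_0),\tilde{v}(x_0)\in\mathrm{Bas}$ realising $\liminf$ and $\limsup$ of $d^{(\epsilon)}(t_\epsilon+cw_\epsilon)$, observes that constancy of the modulus forces $|\hat{v}(x_0)|=|\tilde{v}(x_0)|$, and then applies \eqref{invarianza1} to the two limit vectors so that the squeeze collapses. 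You instead apply \eqref{invarianza1} \emph{before} passing to the limit, to the genuine finite-$\epsilon$ drift $a_\epsilon(t)$, which rotates the problem onto a fixed ray $\{r\mathbf{e}:r\ge 0\}$; since $|a_\epsilon(t_\epsilon+cw_\epsilon)|$ converges outright (no oscillation survives in the modulus), continuity of $v\mapsto\norm{(v+I^\natural_\infty)-I^\natural_\infty}$ gives the limit directly, with no Bolzano--Weierstrass extraction and no reference to the basin $\mathrm{Bas}$. What your route buys is an explicit, manifestly direction-independent formula for the profile, $G_{x_0}(c)=\norm{\bigl((2\gamma)^{1-\ell}e^{-c}\rho\,\mathbf{e}+I^\natural_\infty\bigr)-I^\natural_\infty}$, and a self-contained argument; what the paper's route buys is brevity, since the $\liminf$/$\limsup$ identities are already on the table. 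Two cosmetic points: the convergence $\norm{I^\natural_t-I^\natural_\infty}\to 0$ that you invoke is Proposition \ref{jcppp11} and uses both halves of {\bf (H)} together with \eqref{cond1}, not only the second half; and your continuity claim for $v\mapsto\norm{(v+I^\natural_\infty)-I^\natural_\infty}$ needs only absolute continuity of $\mu^\natural_\infty$ (continuity of translation in $L^1$), so the smoothness of the density is not actually required there.
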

We point out that the invariance property \eqref{invarianza1} is satisfied when the limiting distribution $\mu^\natural_\infty$ is isotropic, i.e. that it is invariant under orthogonal transformations. Examples of isotropic distributions which are also self-decomposable are    the standard Gaussian and isotropic stable distributions.

The strategy to deduce our main result  is as follows. We first  introduce an auxiliary metric which approximates  the metric $d^{(\epsilon)}$ quite well. The idea of using this auxiliary metric comes from the Brownian setting where  the distributions  of $X_t^{(\epsilon)}$ and $X_\infty^{(\epsilon)}$ are known and everything is much easier to control 
thanks to the good estimates that one can deduce  under  the total variation distance. More precisely, both distributions are Gaussian  and the convergence of such auxiliary metric depend on the speed of convergence of the mean of $X_t^{(\epsilon)}$ towards the mean of $X_\infty^{(\epsilon)}$, under the total variation distance. In the L\'evy case such estimates are not available (even in the stable case)  and henceforth a new approach is needed. Indeed,  we use Scheff\'e's Lemma together with Lemma \ref{jara} to study the behaviour of the auxiliary metric. Here is where the cut-off times appear by analysing the drift term $e^{-Qt}x_0$ and using the scaling parameter $\sqrt{\epsilon}$ that appears on the noise. Then to control the error term, we use the Fourier inversion of the characteristic functions of $X_t^{(\epsilon)}$ and  its invariant measure,  together with 
Masuda-type estimates as those used in \cite{MA}.

The remainder of this paper is organised as follows.  In section \ref{fresults}, the statements of  the cut-off phenomenon of the superposition and average processes are established.
Section \ref{smoothness} provides few examples where the assumption {\bf (H)} is fulfilled. 
Section \ref{proofs} is devoted to  the proofs of the results of this paper.
Finally, in the  Appendix  some tools that we omit along this paper are established.

\section{Further results}\label{fresults}
\subsection{Superposition process}
A simple and nice way to model  observational processes that show significant dependence over long time periods is by means of superposition of independent processes with short-range dependence. In this setting, superposition of independent OU type processes have provided flexible and analytically tractable parametric models, see for instance Barndorff-Nielsen \cite{BN} and the references therein.

On the other hand the superposition of independent OU type processes, that we  consider here, can be associated with  an example of a cylindrical OU process which is defined in terms of an infinite-dimensional Langevin equation, see Section  7 in Applebaum \cite{Appl} for further details. As it is noted in \cite{Appl}, infinite-dimensional processes arise naturally in mathematical modelling through 
noise that is described as ``superposition" of independent real-valued L\'evy processes.

In the sequel, we take the parameter $\epsilon>0$ and introduce $(\xi^{(j)}, j\ge 1)$ a sequence of  independent real-valued L\'evy processes which are not necessarily equally distributed. For each $j\ge 1$, we assume that $\xi^{(j)}$ has characteristics  $(a_j,\sigma_j,\pi_j)$ satisfying $a_j\in \mathbb{R}$, $\sigma_j\ge 0$ and 
\[
\int_{\mathbb{R}}{(1\land z^2 )\pi_j(\ud z)}<\infty.
\]

Similarly as before, for each $j\ge 1$, we also consider their associated OUL processes $X^{(\epsilon,j)}$ which are defined by
\begin{equation*}\label{sde22}
\left\{
\begin{array}{r@{\;=\;}l}
\ud X^{(\epsilon,j)}_t & -\gamma_j X^{(\epsilon,j)}_t\ud t+\sqrt{\epsilon}\ud \xi^{(j)}_t \qquad \textrm{for} \qquad t\geq 0, \\
X^{(\epsilon,j)}_0 & x_j,
\end{array}
\right.
\end{equation*} 
where $\gamma_j>0$.
Let  $m=(m_j, j\geq 1)$ be a sequence of real positive numbers 
such that
$\sum_{j\ge 1}{m_j}=1$ and define the {\em{superposition process}} $\chi^{(\epsilon)}:=(\chi^{(\epsilon)}_t: t\geq 0)$ as follows
\begin{equation}
\label{suppp}
\chi^{(\epsilon)}_t:= \sum\limits_{j=1}^{\infty}{m_jX^{(\epsilon,j)}_t} \qquad \textrm{for}\quad t\geq 0.
\end{equation}

We now introduce a series of assumptions that guarantee  that the superposition process $\chi^{(\epsilon)}$ is well-defined.
We first assume that the {\em{initial configuration}} $x:=(x_j:j\geq 1)$ is $m$-integrable, that is to say, 
\begin{equation}\label{C2}
\sum_{j=1}^{\infty}{m_j|x_j|}<\infty.
\end{equation}
The next conditions guarantee  that the drift and Gaussian terms of \eqref{suppp} are well-defined, 
\begin{equation}\label{C3}
\sum\limits_{j=1}^{\infty}\frac{m_j|a_j|}{\gamma_j}<
\infty \qquad \textrm{and}\qquad \sum\limits_{j=1}^{\infty}
\frac{m^2_j \sigma_j}{\gamma_j}<\infty.
\end{equation}
For the jump structure of the process, some additional  conditions are needed.  Let us assume that the L\'evy measures $(\pi_j:j\geq 1)$ satisfies 
\begin{equation}\label{nearzero}
\sum_{j=1}^{\infty}{\frac{m^2_j}{\gamma_j}\int_{(-1,1)}z^2\pi_j(\ud z)}< \infty, \qquad \sum_{j=1}^{\infty}\frac{m_j}{\gamma_j}\pi_j ([-1,1]^c)<\infty,
\end{equation}
and
\begin{equation}\label{nearzero1}
\sum\limits_{j=1}^{\infty}{\frac{1}{\gamma_j}\int_{[-1,1]^c}\log |z|\pi_j(\ud z)}<\infty.
\end{equation}

\begin{lemma}\label{opo}
Let $x:=(x_j:j\geq 1)$ be an initial configuration satisfying \eqref{C2} and take $\epsilon>0$. We also  assume that conditions  \eqref{C3},  \eqref{nearzero} and \eqref{nearzero1} are satisfied. 
Then  the superposition process $(\chi^{(\epsilon)}_t, t\ge0)$ is well-defined and  has a limit distribution $\mu^{(\epsilon, m)}$ which is independent of the initial configuration $x$. Moreover, $\mu^{(\epsilon, m)}$ is self-decomposable with characteristics
\[
a^{(\epsilon,m)}_\infty= \sqrt{\epsilon}\sum\limits_{j=1}^{\infty}\frac{m_ja_j}{\gamma_j}+\sqrt{\epsilon}\sum\limits_{j=1}^{\infty} m_j\int\limits_{\mathbb{R}}\pi_j(\ud x)\int\limits_{0}^{\infty}e^{-\gamma_j s}x\left(\mathbf{1}_{\{|e^{-\gamma_j s }x|\le 1\}}-\mathbf{1}_{\{|x|\le 1\}}\right)\ud s,
\]
and
\[
\qquad  \sigma^{(\epsilon, m)}_\infty=\sum\limits_{j=1}^{\infty}
\frac{m^2_j \sigma_j}{2\gamma_j},\qquad
 \pi^{(\epsilon, m)}_\infty(B)=\sum_{j=1}^\infty \int_0^\infty \pi_j(\sqrt{\epsilon}m^{-1}_je^{s\gamma_j} B)\ud s \qquad \textrm{for} \quad B\in \mathcal{B}(\mathbb{R}).\]
\end{lemma}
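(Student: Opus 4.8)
The plan is to verify the three claims in turn: (a) the superposition process is well-defined as an a.s.\ convergent series for each fixed $t$; (b) it possesses a limit distribution independent of the initial configuration; (c) this limit is self-decomposable with the stated characteristics. For (a), I would write each summand using the explicit representation
\[
X^{(\epsilon,j)}_t = e^{-\gamma_j t}x_j + \sqrt{\epsilon}\int_0^t e^{-\gamma_j(t-s)}\,\ud\xi^{(j)}_s,
\]
and decompose $\xi^{(j)}$ into its drift part, its Gaussian part, the compensated small jumps (on $(-1,1)$) and the large jumps (on $[-1,1]^c$) via the L\'evy--It\^o decomposition. The deterministic term $\sum_j m_j e^{-\gamma_j t}x_j$ converges absolutely by \eqref{C2} since $e^{-\gamma_j t}\le 1$. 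For the drift contribution of the stochastic integral one uses the first sum in \eqref{C3}; for the Gaussian contribution one computes the variance $\epsilon\, m_j^2\sigma_j(1-e^{-2\gamma_j t})/(2\gamma_j)$ and sums using the second part of \eqref{C3}, invoking independence and (say) the Kolmogorov three-series or a second-moment argument to get a.s.\ convergence. For the small-jump part, the same second-moment computation with $\int_{(-1,1)}z^2\pi_j(\ud z)$ in place of $\sigma_j$ and the first sum in \eqref{nearzero} does the job; the large-jump part is handled pathwise, bounding $\sum_j m_j\int_0^t e^{-\gamma_j(t-s)}|\Delta\xi^{(j)}_s|\mathbf 1_{\{|\Delta\xi^{(j)}_s|>1\}}$ in expectation by $\sum_j (m_j/\gamma_j)\int_{[-1,1]^c}|z|\pi_j(\ud z)$; here one must be slightly careful because only $\log$-type control is assumed at infinity through \eqref{nearzero1}, so I would instead split $[-1,1]^c$ into $\{1<|z|\le e\}$, controlled by $\pi_j([-1,1]^c)$ through the second sum in \eqref{nearzero}, and $\{|z|>e\}$, controlled by $\int_{[-1,1]^c}\log|z|\,\pi_j(\ud z)$ through \eqref{nearzero1}, using the elementary bound $|z|\le$ (const) $\cdot\log|z|\cdot$(...) — this is in fact the crux and mirrors why the log-moment condition \eqref{cond1} is exactly what makes the exponential functional converge.

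For part (b), the existence of the limit distribution and its independence of $x$: since the initial-condition contribution $\sum_j m_j e^{-\gamma_j t}x_j\to 0$ as $t\to\infty$ (dominated convergence against the summable sequence $m_j|x_j|$, each term tending to $0$), it suffices to show the noise part $\chi^{(\epsilon)}_t - \sum_j m_j e^{-\gamma_j t}x_j$ converges in distribution. Each coordinate's exponential functional $\sqrt{\epsilon}\int_0^t e^{-\gamma_j(t-s)}\ud\xi^{(j)}_s$ converges in distribution (indeed has a stationary version) precisely under the log-moment condition, which for $\xi^{(j)}$ is implied by \eqref{nearzero1} summed against the weight $1/\gamma_j$ — more than enough. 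I would then argue convergence of the infinite sum at the level of characteristic functions: write $\widehat{\mathcal L(\chi^{(\epsilon)}_t)}(\lambda)=\prod_{j\ge 1}\widehat{\mathcal L(m_j X^{(\epsilon,j)}_t)}(\lambda)$ by independence and pass to the limit $t\to\infty$ termwise, the product converging because $\sum_j |1-\widehat{\mathcal L(m_jX^{(\epsilon,j)}_\infty)}(\lambda)|<\infty$, which follows from the same three sums in \eqref{C3}, \eqref{nearzero}. This gives a well-defined limiting characteristic function $\widehat{\mu^{(\epsilon,m)}}$, hence a limit distribution, manifestly independent of $x$.

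For part (c), self-decomposability and the explicit triple: I would compute $\widehat{\mu^{(\epsilon,m)}}(\lambda)=\prod_{j\ge1}\exp\{\int_0^\infty \psi_j(\sqrt\epsilon\, m_j e^{-\gamma_j s}\lambda)\,\ud s\}$, where $\psi_j$ is the characteristic exponent of $\xi^{(j)}$, using the known formula for the invariant law of a one-dimensional OUL (Theorems 4.1--4.2 in \cite{SaYa}, or Theorem 17.5 in \cite{Sa}) applied to each factor and the fact that $\mathcal L(m_j X^{(\epsilon,j)}_\infty)$ is the pushforward of $\mathcal L(X^{(\epsilon,j)}_\infty)$ under multiplication by $m_j$. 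Plugging the L\'evy--Khintchine form of $\psi_j$ and interchanging sum and integral (justified by the three absolute-summability conditions), the Gaussian part accumulates to $\sigma^{(\epsilon,m)}_\infty=\sum_j m_j^2\sigma_j/(2\gamma_j)$ via $\int_0^\infty \epsilon\, m_j^2 e^{-2\gamma_j s}\sigma_j\,\ud s$; the L\'evy-measure part, after the change of variables $z\mapsto \sqrt\epsilon\, m_j e^{-\gamma_j s}z$ (equivalently $s\mapsto$ the scaling of sets), yields $\pi^{(\epsilon,m)}_\infty(B)=\sum_j\int_0^\infty \pi_j(\sqrt\epsilon\, m_j^{-1}e^{\gamma_j s}B)\,\ud s$; and the drift part, tracking the truncation functions $\mathbf 1_{\{|\cdot|\le1\}}$ through the same change of variables, gives $a^{(\epsilon,m)}_\infty$ as stated. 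Self-decomposability then follows either directly from the factorisation $\widehat{\mu^{(\epsilon,m)}}(\lambda)=\widehat{\mu^{(\epsilon,m)}}(e^{-t}\lambda)\cdot(\text{remainder})$ — but note the $\gamma_j$ differ, so the clean one-parameter $Q$-self-decomposability with a single exponent does not apply verbatim; rather, each factor is self-decomposable and a convergent (weak) limit of self-decomposable laws on $\mathbb R$ is self-decomposable (the class $L$ is closed under weak convergence, cf.\ \cite{Sa}). I expect the main obstacle to be exactly the bookkeeping in (c): justifying all the Fubini interchanges and the change-of-variables manipulations of the truncation indicators uniformly in $j$, while keeping track of which of the five summability hypotheses is being consumed at each step; the log-moment piece in the large-jump drift term is the most delicate, since it is borderline-integrable and must be split as indicated above.
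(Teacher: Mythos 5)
Your overall structure (L\'evy--It\^o split, summing the drift, Gaussian, small-jump and large-jump contributions separately, existence of the limit, self-decomposability) matches the paper's route, and your argument for (c) is a reasonable way of making explicit what the paper leaves implicit: each $m_jX^{(\epsilon,j)}_\infty$ is self-decomposable, independent sums of self-decomposable laws are self-decomposable, and the class $L$ is closed under weak convergence. However, there is a genuine gap in your treatment of the large-jump part of the pure-jump series in (a).

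You propose to control $\sum_j m_j\int_0^t e^{-\gamma_j(t-s)}|\Delta\xi^{(j)}_s|\mathbf 1_{\{|\Delta\xi^{(j)}_s|>1\}}$ in expectation by $\sum_j(m_j/\gamma_j)\int_{[-1,1]^c}|z|\,\pi_j(\ud z)$, and then to rescue the possibly infinite first moment by splitting $[-1,1]^c$ and invoking an ``elementary bound $|z|\le \text{const}\cdot\log|z|\cdot(\dots)$.'' No such bound exists: $|z|/\log|z|\to\infty$, so you cannot dominate a first moment by a log-moment. Under hypotheses \eqref{nearzero}--\eqref{nearzero1} the measures $\pi_j$ need not have a finite first moment on $[-1,1]^c$, so the pathwise $L^1$ argument for the large-jump series fails. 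The paper sidesteps this entirely by working at the level of characteristic functions for the pure-jump term $N_t=\sum_j m_j\int_0^t e^{-\gamma_j(t-s)}\ud L^{(j)}_s$: one writes each term's triple $(\lambda^{(j)}_t,0,\nu^{(j)}_t)$ with $\nu^{(j)}_t(B)=\int_0^t\pi_j(e^{\gamma_j s}B)\ud s$, and then uses the pointwise bound $|e^{izx}-1-izx\mathbf 1_{\{|x|\le1\}}|\le\tfrac12 z^2x^2\mathbf 1_{\{|x|\le1\}}+2\,\mathbf 1_{\{|x|>1\}}$. The crucial point is that the large-jump contribution to the log characteristic exponent is bounded by the constant $2$ (not by $|x|$), and the time integral of the indicator $\mathbf 1_{\{|e^{-\gamma_j s}x|>1\}}$ over $s\in(0,\infty)$ is exactly $\gamma_j^{-1}(\log|x|)_+$. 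Summing over $j$ then consumes \eqref{nearzero1}, and all bounds are uniform in $t$, which gives the limit. If you really want almost-sure convergence of the series rather than well-definedness of the characteristic function, the correct route is to establish convergence of characteristic functions as above and then invoke L\'evy's equivalence theorem for series of independent random variables, not a direct first-moment estimate.

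Two smaller points. First, your small-jump and Gaussian second-moment arguments are fine, and correspond exactly to the $\tfrac12 z^2x^2\mathbf 1_{\{|x|\le1\}}$ piece of the paper's bound together with the second sum in \eqref{C3} and the first sum in \eqref{nearzero}. Second, you rightly observe that a single-exponent $Q$-self-decomposability factorisation does not apply here because the $\gamma_j$ differ; the closure argument you give instead is the right substitute.
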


Since $\mu^{(\epsilon, m)}$ is a self-decomposable random variable on $\mathbb{R}$ then it is degenerated or absolutely continuous with respect to the Lebesgue measure. As we mentioned before, from Theorem 53.1 in \cite{Sa} 
its density, when it exists, is unimodal. From  Proposition 2.4 in \cite{MA} or Proposition 24.19 in \cite{Sa}, it follows that
$\mu^{(\epsilon, m)}$ is non-degenerate if and only if the limit distribution of the process $X^{(\epsilon,j)}$ is non-degenerate for some $j\geq 1$.

For the main result in this section, we assume that the friction coefficients are uniformly bounded away from $0$. In other words,
we assume uniform {\em{coercivity}} as follows
\begin{equation}\label{C1}
\textrm{there exists } \gamma>0 \textrm{ such that } \gamma_j\geq \gamma \text{ for any } j\geq 1.
\end{equation}
We also consider  the sequence of L\'evy process $(\xi^{(\natural,j)}, j\ge 1)$ which is defined as follows: for each $j\ge 1$, $\xi^{(\natural,j)}$ has characteristics  $(0,\sigma_j,\pi_j)$ satisfying \eqref{C3}, \eqref{nearzero} and \eqref{nearzero1}. For the sequence $(\xi^{(\natural,j)}, j\ge 1)$, we also introduce its associated functional $I^{\natural, m}=(I^{\natural, m}_t, t\ge 0)$ which is defined by
\[
I^{\natural, m}_t=\sum_{j=1}^\infty m_j\int_0^t e^{-\gamma_j(t-s)}\ud \xi^{(\natural, j)}_s \qquad \textrm{for} \quad t\ge 0.
\]
We observe that  its limiting distribution $I^{\natural, m}_\infty$ is well-defined under  conditions \eqref{C3}, \eqref{nearzero} and \eqref{nearzero1}.
 

For every $\epsilon>0$ and $t\geq 0$, define 
\[
d^{(\epsilon, m)}(t):=\norm{\chi_t^{(\epsilon)}-\chi^{(\epsilon, m)}_\infty},
\] 
where $\chi^{(\epsilon, m)}_\infty$ denotes the limiting distribution of $\chi^{(\epsilon)}$ whose law is given by $\mu^{(\epsilon, m)}$.
\begin{theorem}\label{supou} Let  $x:=(x_j:j\geq 1)$ be a initial configuration satisfying \eqref{C2}  and 
assume that conditions  \eqref{C3}, \eqref{nearzero} and  \eqref{nearzero1} hold. 
We also assume that  there exist $j\ge 1$ such that  for any $t>0$,  the distribution of 
\[
I^{(\natural,  j)}_t:=\int_0^t e^{-\gamma_j(t-s)}\ud \xi^{(\natural, j)}_s,
\]
satisfies hypothesis {\bf (H)}, 
the coercivity condition \eqref{C1} is fulfilled, 
$J:=\{j\geq 1: \gamma_j=\inf\limits_{k\in \mathbb{N}}{\gamma_k}\}\not= \emptyset$ and $\sum\limits_{j\in J}{m_jx_j}\not=0$. Then, the family $(\chi^{(\epsilon)}, \epsilon>0)$
possesses  profile cut-off under the total variation distance when $\epsilon$ decreases to $0$, with cut-off time and  window cut-off given by 
\begin{eqnarray*}
t_\epsilon:=\frac{1}{2\hat{\gamma}}\ln\left(\nicefrac{1}{\epsilon}\right) \qquad \textrm{  and  }\qquad
w_\epsilon:=\frac{1}{\hat{\gamma}}+o(1),
\end{eqnarray*}
where $\hat{\gamma}:=\inf\limits_{k\in \mathbb{N}}{\gamma_k}$. The profile function $G_{x,m}:\mathbb{R}\rightarrow [0,1]$ is given by
\begin{eqnarray*}
G_{x,m}(c):=\lim\limits_{\epsilon\rightarrow 0}{d^{(\epsilon, m)}\left(t_\epsilon+cw_\epsilon\right)}=\norm{\left(e^{-c}\sum\limits_{j\in J}{m_jx_{j}}+I^{\natural, m}_\infty\right)-I^{\natural, m}_\infty},
\end{eqnarray*}
for any $c\in \mathbb{R}$, and satisfies
\[
\lim\limits_{c \rightarrow -\infty}{G_{x.m}(c)}=1\qquad \textrm{and} \qquad \lim\limits_{c \rightarrow \infty}{G_{x,m}(c)}=0.
\]
\end{theorem}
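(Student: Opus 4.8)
The plan is to mimic the proof of Theorem~\ref{teo2}(i), since the superposition process decays with a single real rate $\hat\gamma$ and the ``Jordan length'' is trivial here. First I would write the process explicitly: splitting each driver via $\xi^{(j)}_t=\xi^{(\natural,j)}_t+a_jt$ and using the variation of constants formula for each coordinate,
\[
\chi^{(\epsilon)}_t=D_t+\sqrt{\epsilon}\,A_t+\sqrt{\epsilon}\,I^{\natural,m}_t,\qquad
D_t:=\sum_{j\ge1}m_je^{-\gamma_jt}x_j,\quad A_t:=\sum_{j\ge1}\frac{m_ja_j}{\gamma_j}\bigl(1-e^{-\gamma_jt}\bigr),
\]
all series converging absolutely by \eqref{C2} and \eqref{C3}; letting $t\to\infty$ and invoking Lemma~\ref{opo} (consistency of the triples being routine bookkeeping) identifies $\chi^{(\epsilon,m)}_\infty\stackrel{d}{=}\sqrt{\epsilon}\,A_\infty+\sqrt{\epsilon}\,I^{\natural,m}_\infty$ with $A_\infty=\sum_{j\ge1}m_ja_j/\gamma_j$. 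The total variation distance is translation invariant and, by Lemma~\ref{sca}, invariant under the common scaling by $\sqrt\epsilon$, so
\[
d^{(\epsilon,m)}(t)=\norm{\bigl(v_\epsilon(t)+I^{\natural,m}_t\bigr)-I^{\natural,m}_\infty},\qquad
v_\epsilon(t):=\epsilon^{-1/2}D_t+\bigl(A_t-A_\infty\bigr).
\]

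Next I would transfer hypothesis~{\bf (H)} from the single index $j$ furnished by the assumptions to the whole functional $I^{\natural,m}$. By independence of the $\xi^{(\natural,k)}$ one has $\widehat{\mu^{(\natural,m)}_t}(\lambda)=\prod_{k\ge1}\widehat{\mu^{(\natural,k)}_t}(m_k\lambda)$, and since every factor has modulus $\le1$ (the real part of a characteristic exponent is non-positive) we get $|\widehat{\mu^{(\natural,m)}_t}(\lambda)|\le|\widehat{\mu^{(\natural,j)}_t}(m_j\lambda)|$ and likewise $|\widehat{\mu^{(\natural,m)}_\infty}(\lambda)|\le|\widehat{\mu^{(\natural,m)}_t}(\lambda)|$ for $t>0$. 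A change of variables then shows that $\widehat{\mu^{(\natural,m)}_t}$ and $\widehat{\mu^{(\natural,m)}_\infty}$ are integrable and that the decay bound in {\bf (H)} holds for $I^{\natural,m}$ (with threshold $R\mapsto t_0(m_jR)$); in particular $I^{\natural,m}_\infty$ has a continuous density vanishing at infinity, and {\bf (H)} yields $\norm{I^{\natural,m}_t-I^{\natural,m}_\infty}\to0$ as $t\to\infty$. Hence, by the triangle inequality and translation invariance,
\[
\Bigl|\,d^{(\epsilon,m)}(t)-\norm{\bigl(v_\epsilon(t)+I^{\natural,m}_\infty\bigr)-I^{\natural,m}_\infty}\,\Bigr|\le\norm{I^{\natural,m}_t-I^{\natural,m}_\infty}\xrightarrow[t\to\infty]{}0,
\]
so along $t=t_\epsilon+cw_\epsilon$ (which tends to $\infty$) it suffices to track the auxiliary quantity $\mathfrak{d}^{(\epsilon,m)}(t):=\norm{(v_\epsilon(t)+I^{\natural,m}_\infty)-I^{\natural,m}_\infty}$.

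The core computation is the asymptotics of $v_\epsilon(t_\epsilon+cw_\epsilon)$. From $t_\epsilon=(2\hat\gamma)^{-1}\ln(\nicefrac{1}{\epsilon})$ we have $\epsilon^{-1/2}=e^{\hat\gamma t_\epsilon}$, whence
\[
\epsilon^{-1/2}D_{t_\epsilon+cw_\epsilon}=e^{-\hat\gamma cw_\epsilon}\sum_{j\ge1}m_jx_j\,e^{-(\gamma_j-\hat\gamma)(t_\epsilon+cw_\epsilon)}.
\]
Each summand with $j\in J$ equals $m_jx_j$, whereas those with $j\notin J$ tend to $0$ while remaining dominated by $m_j|x_j|$ (using $\gamma_j\ge\hat\gamma$ and $t_\epsilon+cw_\epsilon>0$ for small $\epsilon$), so dominated convergence together with \eqref{C2}, coercivity \eqref{C1} and $\hat\gamma w_\epsilon\to1$ gives $\epsilon^{-1/2}D_{t_\epsilon+cw_\epsilon}\to e^{-c}\sum_{j\in J}m_jx_j$; combined with $A_{t_\epsilon+cw_\epsilon}-A_\infty\to0$ this yields $v_\epsilon(t_\epsilon+cw_\epsilon)\to e^{-c}\sum_{j\in J}m_jx_j=:\rho(c)$. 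Since $I^{\natural,m}_\infty$ has a density, the map $v\mapsto\norm{(v+I^{\natural,m}_\infty)-I^{\natural,m}_\infty}$ is continuous by $L^1$-continuity of translations, so $\mathfrak{d}^{(\epsilon,m)}(t_\epsilon+cw_\epsilon)\to\norm{(\rho(c)+I^{\natural,m}_\infty)-I^{\natural,m}_\infty}=G_{x,m}(c)$, and the previous step gives $d^{(\epsilon,m)}(t_\epsilon+cw_\epsilon)\to G_{x,m}(c)$ for every $c$. Finally $G_{x,m}(c)\to0$ as $c\to+\infty$ because $\rho(c)\to0$, while $G_{x,m}(c)\to1$ as $c\to-\infty$ because $|\rho(c)|\to\infty$ (here the assumption $\sum_{j\in J}m_jx_j\ne0$ enters) and $\norm{(v+Y)-Y}\to1$ as $|v|\to\infty$ whenever $Y$ has a density vanishing at infinity; together with $t_\epsilon\to\infty$ and $w_\epsilon=o(t_\epsilon)$ this completes the verification of profile cut-off.

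The main obstacle I anticipate is twofold. The delicate analytic point is upgrading the weak convergence $I^{\natural,m}_t\Rightarrow I^{\natural,m}_\infty$ to convergence in total variation: this is exactly what legitimises replacing $d^{(\epsilon,m)}$ by the auxiliary quantity, and it is precisely here that {\bf (H)}---transferred through the product structure of the characteristic functions---is indispensable, with no analogue in the finite-dimensional reversible Gaussian picture. The delicate bookkeeping point is interchanging the limit in $\epsilon$ with the infinite summation defining $D_t$: it forces the rewriting of the $j\notin J$ contribution displayed above so that its summands are dominated by the $m$-integrable sequence $m_j|x_j|$ uniformly in $\epsilon$, the coercivity \eqref{C1} being what prevents the rates $\gamma_j$ from collapsing to $0$.
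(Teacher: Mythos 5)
Your proposal is correct and follows essentially the same route as the paper: transfer of hypothesis {\bf (H)} to $I^{\natural,m}$ through the product of characteristic functions, reduction to an auxiliary distance via the triangle inequality and the translation/scaling invariances of Lemma \ref{sca}, the dominated-convergence computation giving $\epsilon^{-1/2}D_{t_\epsilon+cw_\epsilon}\to e^{-c}\sum_{j\in J}m_jx_j$, and Scheff\'e-type arguments (plus Lemma \ref{a5}) for the limits of the profile. The only differences are cosmetic — you absorb the drift difference $A_t-A_\infty$ into $v_\epsilon(t)$ rather than into the error term, and you rescale the threshold to $t_0(m_jR)$ in the transfer of {\bf (H)}, which is in fact slightly more careful than the paper's own bookkeeping at that step.
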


\subsection{Average  process}
Finally, we  study  the cut-off phenomenon  for the average  of OUL when the driving process is a stable L\'evy process.  In the diffusive case, Lachaud \cite{BL} observed that the average process  satisfies window cut-off with the same cut-off  and window times as the sample of OU processes. The previous observation is quite  surprising since the sample process comprises a huge amount  number of processes. As we will see below, the  average process of OU  not only  possesses cut-off  and window cut-off   but also has  profile cut-off.

Let  us consider the sequence  $(\epsilon_n, n\ge 1)$ of strictly positive real numbers    converging to $0$ accordingly as $n$ increases.
In what follows,  we assume that the  process  $\xi$ is  a real-valued  stable L\'evy process with a linear drift $a\in \mathbb{R}$,  that is to say
 its characteristic exponent  $\psi_\alpha$ is given by
\begin{equation}\label{stable}
\psi_\alpha(z)=iz a- c|z|^{\alpha}\left(1-i\beta\tan(\pi\alpha/2)\mathrm{sgn}(z)\right) \qquad \textrm{for}\quad z\in \mathbb{R},
\end{equation}
where $\alpha\in (0,1)\cup(1,2]$, $c>0$ and $\beta\in[-1,1]$ or $\alpha=1$, $\beta=0$ and we understand
$\beta\tan(\pi\alpha/2)=0.$

We also consider the sequence of OUL  processes $((X^{(\epsilon_n)}_t: t\geq 0),  n\ge 1)$ such that for each $n\ge 1$, $X^{(\epsilon_n)}$ is defined as the unique strong solution of \eqref{sdem} with $\gamma:=Q>0$  and initial condition $X^{(\epsilon_n)}_0=x_0\neq 0$. Let $\big(\big(X^{(\epsilon_n),1}_t,\cdots,X^{(\epsilon_n),n}_t\big), t\geq 0 \big)$ be a sample of $n$ independent copies of  $X^{(\epsilon_n)}$.  For simplicity on exposition,  we denote by $(\xi^{(j)}, 1\le j\le  n)$ for the sequence of independent copies of the stable L\'evy process $\xi$ which drives the  above  sample of OUL.

For each $n\ge 1$, we define the  average process $A^{(n)}:=(A^{(n)}_t, t\geq 0)$ as follows
\begin{eqnarray}\label{ap}
A^{(n)}_t&:=&\frac{1}{n}\sum\limits_{i=1}^{n}{X^{(\epsilon_n),i}_t}\qquad \textrm{for} \qquad t\geq  0.
\end{eqnarray}
It is not so difficult to deduce that the uniform average process $A^{(n)}$ satisfies the following SDE
\[
\ud A^{(n)}_t =-\gamma A^{(n)}_t\ud t+\sqrt{\epsilon_n} \ud L^{(n)}_t \qquad \textrm{for} \qquad t\geq 0,
\]
where $L^{(n)}:=(L^{(n)}_t,t\ge 0)$ is a stable L\'evy process with drift such that 
\[L^{(n)}_t:=
\frac{1}{n}\sum_{i=1}^n\xi^{(i)}_t\qquad \textrm{for} \qquad t\geq 0.
\]
 It is straightforward to deduce that the characteristic exponent of $L^{(n)}$ is given by 
 \[
 \psi^{L^{(n)}}_\alpha(z)=iz a- cn^{1-\alpha}|z|^{\alpha}\left(1-i\beta\tan(\pi\alpha/2)\mathrm{sgn}(z)\right) \qquad \textrm{for}\quad z\in \mathbb{R}.
 \]
Since the stable  L\'evy process $\xi$  satisfies the log-moment condition \eqref{cond1} for  $\alpha\in(0,2)$, the  average process $A^{(n)}$ has a limiting distribution, that we denote by $A^{(n)}_\infty$. On the other hand, it is well known that the limiting distribution also exists when $\xi$ is a Brownian motion with drift, i.e. when $\alpha=2$. In any case, the characteristic exponent  of $A^{(n)}_\infty$ is as follows
\[
 \psi^{A^{(n)}_\infty}_\alpha(z)=iz a \frac{\sqrt{\epsilon_n}}{\gamma}-\frac{cn^{1-\alpha}\epsilon_n^{\alpha/2}}{\alpha\gamma}|z|^{\alpha}\left(1-i\beta\tan(\pi\alpha/2)\mathrm{sgn}(z)\right) \qquad \textrm{for}\quad z\in \mathbb{R}.
 \]
For each $n\ge 1,$ we also define the total variation distance between $A^{(n)}_t$ and its limiting distribution by 
\[ 
 d^{(n)}(t):=\norm{A^{(n)}_t-A^{(n)}_\infty} \qquad \textrm{for}\quad t\ge 0.
 \]

\begin{theorem}\label{ave} For $x_0\ne 0$, the family of processes $(A^{(n)}, n\ge 1)$ possesses  profile cut-off under the total variation distance, when $n$ goes to $\infty$, with 
 cut-off time and window cut-off  given by 
\begin{eqnarray*}
t_n:=\frac{1}{2\gamma}\ln\left(\frac{n^{2-2/\alpha}}{\epsilon_n}\right),\qquad
w_n:=\frac{1}{\gamma}+o_n(1).
\end{eqnarray*}
The  profile  function $G:\mathbb{R}\rightarrow [0,1]$ is given by
\begin{eqnarray*}
G_{x_0}(c):=\lim\limits_{n\rightarrow \infty}{d^{(n)}\left(t_n+cw_n\right)}=\norm{(e^{-c}x_0+\mathcal{S}_\alpha)-\mathcal{S}_\alpha },
\end{eqnarray*}
for any $b\in \mathbb{R}$ and where $\mathcal{S}_\alpha$ is a strictly stable distribution, i.e. it characteristic exponent is given by $\psi_\alpha$ with $a=0$. Moreover, it  satisfies
\[
\lim\limits_{c \rightarrow -\infty}{G_{x_0}(c)}=1\qquad \textrm{and} \qquad \lim\limits_{c \rightarrow \infty}{G_{x_0}(c)}=0.
\]
\end{theorem}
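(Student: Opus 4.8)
The plan is to reduce Theorem \ref{ave} to an application of Theorem \ref{teo2} by identifying the average process $A^{(n)}$ with a single OUL process driven by a suitably rescaled stable L\'evy process, and then tracking how the rescaling affects the cut-off time. First I would use the identity already recorded in the excerpt, namely that $A^{(n)}$ solves $\ud A^{(n)}_t=-\gamma A^{(n)}_t\ud t+\sqrt{\epsilon_n}\,\ud L^{(n)}_t$ with $L^{(n)}$ stable of exponent $\psi^{L^{(n)}}_\alpha$. The key observation is the scaling property of stable laws: $L^{(n)}$ has the same law as $(n^{1/\alpha-1}\xi^{\flat}_t - a(n^{1/\alpha-1}-1)t, t\ge 0)$ where $\xi^\flat$ is the stable process of exponent $\psi_\alpha$; equivalently, the driving noise $\sqrt{\epsilon_n}\,\ud L^{(n)}$ is, up to a deterministic drift, $\sqrt{\epsilon_n n^{2/\alpha-2}}\,\ud\xi^\flat$. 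Hence $A^{(n)}$ is (in law) an OUL process in dimension $d=1$ with matrix $Q=\gamma$, driven by $\xi^\flat$, with noise amplitude $\tilde\epsilon_n:=\epsilon_n n^{2/\alpha-2}$ (modulo a deterministic drift term that, as emphasised after Theorem \ref{teo2}, does not affect $d^{(n)}$ since the total variation distance is translation-invariant along the common deterministic shift). Since $\epsilon_n\to 0$, one checks $\tilde\epsilon_n\to 0$ as well (for $\alpha\in(0,1)\cup(1,2]$ the factor $n^{2/\alpha-2}$ is polynomial and the relevant regime is $\tilde\epsilon_n\to0$, which holds under the standing hypothesis; if $\alpha<1$ this needs $\epsilon_n=o(n^{2-2/\alpha})$, which should be part of the setup or else absorbed into the hypotheses on $(\epsilon_n)$).

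Next I would verify the hypotheses of Theorem \ref{teo2} for this OUL with parameter $\tilde\epsilon_n$. In dimension one with $Q=\gamma>0$ a scalar, Lemma \ref{jara} gives $\gamma(x_0)=\gamma$, $\ell=1$, $m=1$, $\theta_1=0$ and $v_1=v(x_0)=x_0$, so we are automatically in case (i) of Theorem \ref{teo2} (the $\theta_k=0$ case), which yields profile cut-off. The log-moment condition \eqref{cond1} holds for stable processes with $\alpha\in(0,2)$ and trivially for $\alpha=2$, as already noted in the excerpt. Condition {\bf (H)} for $\xi^\natural=\xi^\flat$: the characteristic function of $I^\natural_t=\int_0^t e^{-\gamma(t-s)}\ud\xi^\flat_s$ is $\exp\{\int_0^t \psi_\alpha^0(e^{-\gamma s}\lambda)\ud s\}$ where $\psi_\alpha^0$ is the strictly stable exponent (no drift); its modulus is $\exp\{-\tfrac{c}{\alpha\gamma}(1-e^{-\alpha\gamma t})|\lambda|^\alpha\}$, which is integrable for every $t>0$ and for which $\int_{D_R^c}|\widehat{\mu^\natural}_s(\lambda)|\ud\lambda$ is dominated uniformly for $s\ge t_0>0$ by a Gaussian-type tail in $R^\alpha$ that vanishes as $R\to\infty$ — so {\bf (H)} holds with any fixed $t_0(R)\to\infty$. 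The limiting functional $I^\natural_\infty$ has characteristic function $\exp\{-\tfrac{c}{\alpha\gamma}|\lambda|^\alpha\}$, i.e. it is the strictly $\alpha$-stable law $\mathcal{S}_\alpha$ appearing in the statement. Plugging $\ell=1$, $\gamma(x_0)=\gamma$, $v(x_0)=x_0$, $(2\gamma)^{1-\ell}=1$ into Theorem \ref{teo2}(i) gives profile function $c\mapsto \norm{(e^{-c}x_0+\mathcal{S}_\alpha)-\mathcal{S}_\alpha}$, exactly as claimed, with window $w_n=\gamma^{-1}+o_n(1)$.

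The remaining point is the cut-off time: Theorem \ref{teo2} gives $t_{\tilde\epsilon_n}=(2\gamma)^{-1}\ln(1/\tilde\epsilon_n)+\tfrac{\ell-1}{\gamma}\ln\ln(1/\tilde\epsilon_n)=(2\gamma)^{-1}\ln(1/\tilde\epsilon_n)$ since $\ell=1$, and substituting $\tilde\epsilon_n=\epsilon_n n^{2/\alpha-2}$ yields $(2\gamma)^{-1}\ln\big(n^{2-2/\alpha}/\epsilon_n\big)$, which is precisely $t_n$. Finally, since the deterministic drift shift common to $A^{(n)}_t$ and $A^{(n)}_\infty$ cancels in total variation, $d^{(n)}(t)$ equals the $d^{(\tilde\epsilon_n)}(t)$ of the driftless OUL, and the limits $G_{x_0}(-\infty)=1$, $G_{x_0}(+\infty)=0$ are inherited from Theorem \ref{teo2}. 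The main obstacle I anticipate is making the drift-cancellation rigorous: one must check that the deterministic function subtracted to pass from $L^{(n)}$ to $n^{1/\alpha-1}\xi^\flat$ produces \emph{the same} time-dependent shift in $A^{(n)}_t$ and in $A^{(n)}_\infty$ after running through the exponential functional, so that it genuinely drops out of $\norm{A^{(n)}_t - A^{(n)}_\infty}$; this is a short computation with the explicit solution $A^{(n)}_t=e^{-\gamma t}x_0+\sqrt{\epsilon_n}\int_0^t e^{-\gamma(t-s)}\ud L^{(n)}_s$ but it is the place where care is needed, together with confirming $\tilde\epsilon_n\to0$ when $\alpha\in(0,1)$.
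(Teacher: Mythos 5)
Your route is genuinely different from the paper's: you reduce Theorem \ref{ave} to Theorem \ref{teo2} by absorbing the $n$-dependence into a rescaled noise amplitude $\tilde\epsilon_n=\epsilon_n n^{2/\alpha-2}$, whereas the paper bypasses Theorem \ref{teo2} entirely and works directly with the stable scaling identity $n^{-1/\alpha}\sum_{j\le n}Y^{(j)}_t\overset{(d)}{=}\left(\frac{1-e^{-\alpha\gamma t}}{\alpha\gamma}\right)^{1/\alpha}\mathcal{S}_\alpha$, which collapses both the auxiliary metric $D^{(n)}(t)$ and the error $R^{(n)}(t)$ to explicit expressions in the single reference law $\mathcal{S}_\alpha$ and then applies Scheff\'e's Lemma once. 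Your reduction does give the correct cut-off time $t_n$ and window $w_n$ by the substitution $\epsilon\mapsto\tilde\epsilon_n$ with $\ell=1$, and your identification of $I^\natural_\infty$ as a rescaled $\mathcal{S}_\alpha$ is right, so the conclusion of profile cut-off with the claimed profile shape follows in principle.

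The one place where the reduction is not the black box you treat it as is the drift. After the rescaling, the noise $\sqrt{\epsilon_n}\,\ud L^{(n)}_t$ becomes $\sqrt{\tilde\epsilon_n}\,\ud\xi^{\flat,0}_t+\sqrt{\epsilon_n}\,a\,\ud t$, i.e.\ an OUL of amplitude $\tilde\epsilon_n$ driven by a stable process whose \emph{effective drift} is $a_n:=a\sqrt{\epsilon_n}/\sqrt{\tilde\epsilon_n}=a\,n^{1-1/\alpha}$, which is not fixed and, for $\alpha<1$, diverges with $n$. Theorem \ref{teo2} and Proposition \ref{jcppp11} are stated and proved for a single, fixed L\'evy process; the error term $R(t)$ there vanishes as $t\to\infty$ because $C_t\to C_\infty$ for a fixed drift. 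In your setting the residual shift is $\delta_t=-(\sqrt{\epsilon_n}-\sqrt{\tilde\epsilon_n})\,a\,e^{-\gamma t}/\gamma$, so $\delta_t$ does not ``drop out'' of $\norm{A^{(n)}_t-A^{(n)}_\infty}$ as you suggest; rather one must verify directly that the normalised correction $\delta_{t_n+cw_n}/\sqrt{\tilde\epsilon_n}$, which equals $(n^{1/\alpha-1}-1)\,a\,e^{-\gamma(t_n+cw_n)}/\gamma=O(\sqrt{\epsilon_n})+O(\sqrt{\tilde\epsilon_n})$, tends to $0$, and then feed this into a Scheff\'e argument. This does work (precisely because $e^{-\gamma t_n}$ kills the $n^{1-1/\alpha}$ factor), but it is an extra step outside the literal statement of Theorem \ref{teo2}, not a triviality; the paper's direct computation avoids introducing an $n$-dependent drift at all, which is exactly why it is shorter.

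Two minor points. First, as you note, for $\alpha\in(0,1)$ the condition $\tilde\epsilon_n\to0$ (equivalently $\epsilon_n=o(n^{2-2/\alpha})$) is needed; this is in fact implicit in requiring $t_n\to\infty$, so it is not an additional hypothesis. Second, when you identify $I^\natural_\infty$ with $(\alpha\gamma)^{-1/\alpha}\mathcal{S}_\alpha$, an application of Lemma \ref{sca}(ii) rescales the shift $e^{-c}x_0$ by $(\alpha\gamma)^{1/\alpha}$, so the profile you obtain is $\norm{\bigl((\alpha\gamma)^{1/\alpha}e^{-c}x_0+\mathcal{S}_\alpha\bigr)-\mathcal{S}_\alpha}$ rather than the unscaled form in the statement; this multiplicative constant can be absorbed into a shift of $c$ or into the normalisation of $\mathcal{S}_\alpha$, and the paper's own displayed computation has the same feature, so it does not affect the cut-off claim.
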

It is important to note that the assumption that $\xi$ is a stable L\'evy process with drift is crucial in our arguments. Indeed,  the dimension of the sampling and the cut-off times $t_n$ in the distance $d^{(n)}$ are very strong related that without the scaling property seems to be very difficult to deduce any limiting behaviour of  $d^{(n)}(t_n)$. To be more precise the weak limit of 
\[
\int_0^{t_n} e^{-\gamma(t_n-s)}\ud L^{(n)}_{s},
\]
under the total variation distance needs to be well-understood.

\section{Smoothness}\label{smoothness}
In this section, we provide a few examples where condition {\bf (H)} is satisfied. Moreover, in all examples presented below the marginal distribution of the OUL process $X^{(\epsilon)}$ (and the superposition process $\chi^{(\epsilon)}$), for any $\epsilon>0$, has a density  in $\mathcal{C}_b$ or $\mathcal{C}_b^\infty$. Implicitly the invariant distribution  $\mu^{(\epsilon)}$ (similarly for $\mu^{(\epsilon, m)}$) and the random variable $I^\natural_\infty$ (similarly for  $I^{\natural, m}_\infty$) have densities belonging to $\mathcal{C}_b$ or $\mathcal{C}_b^\infty$.

For simplicity, we use the notation  $\Re(z)$ and $\Im(z)$ for the real  and imaginary part of any complex number $z$.

{\bf 1.} The first example that we consider here is the case when $ \Sigma$ is positive definite, i.e.  the L\'evy process $\xi$ has presence of a $d$-dimensional Brownian motion. In other words, the matrix $ \Sigma$ has full rank and implicity
 for any $t>0$, $X_t^{(1)}$ and $I_t^{\natural}$  have densities belonging to $\mathcal{C}_b^\infty$ (see Masuda \cite{MA}). Both distributions possess
  a Gaussian  component  which are described by the covariance matrix 
\[
\Sigma_t= \int_0^t e^{-sQ }\Sigma e^{-sQ^T } \ud s \qquad \textrm{for} \quad t>0,
\] 
implying the integrability of  the map $\lambda \mapsto|\lambda|^k|\widehat{\mu^\natural}_t(\lambda)|$, for any  $k$ nonnegative integer,  and implicitly the smoothness for the densities of $X_t^{(1)}$ and $I_t^{\natural}$. Moreover, if the log-moment condition \eqref{cond1} holds, then the same holds true for the limiting distributions $\mu^{(1)}$ and $I^{\natural}_\infty$, where the Gaussian component is described by the covariance matrix 
\[
 \Sigma_\infty= \int_0^\infty e^{-sQ }\Sigma e^{-sQ^T } \ud s.
\] 
In order to deduce the second part of  condition {\bf (H)}, we first observe that the characteristic exponent $\psi$ of the L\'evy process $\xi$ satisfies
\[
\psi(\lambda)=\mathcal{O}(|\lambda|^2) \qquad \textrm{as} \quad |\lambda|\to \infty.
\]
Since   $\Re{(\psi(\lambda))}\leq 0$ for any $\lambda \in \mathbb{R}^d$,  there exist  constants $C>0$ and $R>0$ such that $\Re(\psi(\lambda))\leq -C |\lambda|^{2}$
 for any $|\lambda|\geq L>0$. 
 
On the other hand, recall that for any $Q\in \mathcal{M}^{+}(d)$ there are positive constants $c_1$, $c_2$, $c_3$ and $c_4$ such that
\begin{equation}\label{desQ}
c_4e^{-c_2 t}|\lambda|\leq |e^{-tQ^T}\lambda|\leq c_3e^{-c_1 t}|\lambda| \qquad \textrm{for }\quad t\geq 0 \quad\textrm{ and } \quad\lambda\in \mathbb{R}^d,
\end{equation}
 see for instance page $139$ of Urbanik \cite{UR}.  Hence, we take $R>c^{-1}_4L$ and introduce  $t_0(R):=c_2^{-1}\ln\left(\frac{Rc_4}{L}\right)>0$.
Putting all the pieces together,  we have for $t\ge t_0(R)$, that
\begin{align*}
\int_{D^c_R}|\widehat{\mu^\natural}_t(\lambda)| \ud \lambda  & = 
\int_{D^c_R}\exp\left\{\int_0^t 
\Re{\left( \psi( e^{-sQ^T } \lambda) \right) \ud s   }  \right\}\ud \lambda   \\
&\leq \int_{D^c_R}\exp\left\{\int_0^{t_0{(R)}} 
\Re{\left( \psi( e^{-sQ^T } \lambda) \right)\ud s   }  \right\}\ud \lambda  &\\
&\leq \int_{D^c_R}\exp\left\{-C\int_0^{t_0{(R)}} 
|e^{-sQ^T } \lambda| ^{2}\ud s    \right\}\ud \lambda &\\
&\leq \int_{D^c_R}\exp\left\{-C
\frac{c^{2}_4}{2 c_2}
\left(1-e^{-2 c_2 t_0{(R)}}\right)  |\lambda|^{2}    \right\}\ud \lambda,
\end{align*}
which implies the second part of  condition {\bf (H)}.


{\bf 2.}  The second case, that we consider here, is very similar to the previous example and includes the so-called  family of stable L\'evy processes. Indeed,  we suppose that there exists $\alpha\in (0,2)$ such that
\begin{equation}\label{cond9}
\limsup\limits_{|\lambda|\rightarrow \infty}\frac{\Re{(\psi(\lambda))}}{|\lambda|^\alpha}\in (-\infty,0).
\end{equation}
In other words,  there exist  constants $C>0$ and $R>0$ such that $\Re{(\psi(\lambda))}\leq -C|\lambda|^{\alpha}$
 for any $|\lambda|\geq L>0$.  
Hence, for any $t>0$, we define  $L_t=c^{-1}_4 Le^{c_2t}$ and  deduce
\begin{align*}
\int_{D^c_{L_t}}|\widehat{\mu^\natural}_t(\lambda)| \ud \lambda  & = 
\int_{D^c_{L_t}}\exp\left\{\int_0^t 
\Re{\left( \psi( e^{-sQ^T } \lambda) \right) \ud s   }  \right\}\ud \lambda   \\
&\leq \int_{D^c_{L_t}}\exp\left\{-C\int_0^{t} 
|e^{-sQ^T } \lambda| ^{\alpha}\ud s    \right\}\ud \lambda &\\
&\leq \int_{D^c_{L_t}}\exp\left\{-C
\frac{c^{2}_4}{2 c_2}
\left(1-e^{-\alpha c_2 t}\right)  |\lambda|^{\alpha}    \right\}\ud \lambda.
\end{align*}
The previous integral  is clearly finite and implicitly the smoothness for the densities of 
$X^{(1)}_t$ and $I^{\natural}_t$ is obtained. If the log-moment condition \eqref{cond1} holds, then the same holds true for the limiting distributions $\mu^{(1)}$ and $I^{\natural}_\infty$ and 
implicitly, the smoothness for their densities.
 
Using exactly the same arguments but with $R>c^{-1}_4 L$ and   $t_0(R):=c_2^{-1}\ln\left(\frac{Rc_4}{L}\right)>0$, we deduce for $t\geq t_0(R)$ 
\[\int_{D^c_R}|\widehat{\mu^\natural}_t(\lambda)| \ud \lambda \leq\int_{D^c_R}\exp\left\{-C
\frac{c^{\alpha}_4}{\alpha c_2}
\left(1-e^{-\alpha c_2 t_0{(R)}}\right)  |\lambda|^{\alpha}    \right\}\ud \lambda,
\]
which implies the second part of  condition {\bf (H)}.  


{\bf  3.} Our third case  impose an {\it Orey-Masuda} or {\it Kallenberg-Bornarchuk-Kulik} type condition on the jump structure of the L\'evy process $\xi$. To be more precise, let us  assume that  there exists a radial non-negative function $\kappa:\mathbb{R}^d \to [0,\infty)$  satisfying
\begin{itemize}
\item[i)] as a function of the radius, i.e. $\tilde{\kappa}(r)=\kappa(v)$ if $|v|=r>0$, it is non-decreasing, 
\item[ii)] for any  $\beta>0$, we have
\[
\int_{D_1^c}{e^{-\beta \kappa(v)}}\ud v<\infty,
\]
\item[iii)]  and 
\begin{equation}\label{OMtype}
\int\limits_{\{z\in \mathbb{R}^d: |\langle z,v \rangle|\leq 1\}}\langle z,v \rangle ^2 \nu(\ud z)\geq \kappa(v)\qquad 
\textrm{ for any } v\in \mathbb{R}^d  \quad\textrm{ with }\quad  |v|\geq 1.
\end{equation}
\end{itemize}

The previous assumption  on the L\'evy measure $\nu$ is an  {\it Orey-Masuda}  or {\it  Kallenberg-Bornarchuk-Kulik} type condition. Indeed, we observe  that {\it Orey-Masuda's} condition \eqref{Masuda'scond} is fulfilled when $\kappa(v)=c|v|^{\alpha}$ with $\alpha\in (0,2)$ and $c>0$. Similarly, {\it Kallenberg's} condition \eqref{kalle} and {\it Bornarchuk-Kulik's} condition \eqref{borna} are satisfied when $\kappa(v)=g(|v|)\ln (|v|)$ and $g$ is increasing and  goes to $\infty$, as $|v|$ goes to $\infty$. It is important to note that our assumptions does not seem to imply  condition  \eqref{BodKcond}.

In order to prove that for any $t>0$, $\mu^\natural_t$ possesses a density which is smooth, we first observe that 
  for any $\lambda\in \mathbb{R}^d$
\[
|\widehat{\mu^\natural}_t(\lambda)|\leq \left|\exp\left\{\int_{0}^{t}\int_{\mathbb{R}^d} 
\left(e^{i\langle e^{-sQ^T}\lambda,z\rangle}-1-i\langle e^{-sQ^T}\lambda,z\rangle 1_{\{|z|\le 1\}}\right)\nu(\ud z) \ud s   \right\}\right|.
\]
 Recalling that $1-\cos(x)\geq (\nicefrac{2}{\pi^{-2}})x^2$ for $|x|\leq \pi$, we deduce that for any $\lambda\in \mathbb{R}^d$,
 \[
\begin{split}
|\widehat{\mu^\natural}_t(\lambda)|&\le
\exp\left\{\int_{0}^{t}\int_{\{ |\langle z,e^{-sQ^T}\lambda \rangle |\leq \pi\}} 
\left(\cos\left(\langle e^{-sQ^T}\lambda,z\rangle\right)-1\right)\nu(\ud z) \ud s   \right\}  \\
&\le \exp\left\{-2\int_{0}^{t}\int_{\{ |\langle z,e^{-sQ^T}\lambda \rangle |\leq \pi\}} 
{\langle z,\nicefrac{e^{-sQ^T}\lambda}{\pi} \rangle}{}^2\nu(\ud z) \ud s \right\}.
\end{split}
\]
 Next, we take $L_t=L\pi c_4^{-1}e^{c_2 t}$, and 
 observe  that if $|\lambda|\geq L$ and $0\leq s\leq t$, then 
${|e^{-sQ^T}\lambda |}\geq \pi$. Hence, using \eqref{OMtype} together with \eqref{desQ} and the fact that $\tilde\kappa$ is non-decreasing, we deduce 
\[
\begin{split}
\int_{D^c_{L_t}} |\widehat{\mu^\natural}_t(\lambda)|\ud\lambda & \leq \int_{D^c_{L_t}}\exp\left\{-2\int_{0}^{t}\int_{\{ |\langle z,e^{-sQ^T}\lambda \rangle |\leq \pi\}} 
{\langle z,\nicefrac{e^{-sQ^T}\lambda}{\pi} \rangle}{}^2\nu(\ud z) \ud s \right\}\ud\lambda \\
&\le \int_{D^c_{L_t}}  \exp\left\{-2\int_{0}^{t}\kappa\big(|\nicefrac{e^{-sQ^T}\lambda}{\pi}|\big)
 \ud s \right\}\ud\lambda \\
&\le\int_{D^c_{L_t}}  \exp\left\{-2{t}\kappa\big(c_4 e^{c_2 t} \pi^{-1}|\lambda|\big)
 \right\}\ud\lambda\\
&\le\int_{D^c_{L}}  \exp\left\{-2{t}\kappa\big(|\lambda|\big)
 \right\}\ud\lambda,\\
\end{split}
\]
which is integrable  from our hypothesis. Similarly as in the previous cases, the latter implies  that the densities of $X_t^{(1)}$ and $I_t^{\natural}$, for $t\geq 0$,  belong to $\mathcal{C}_b$.   Moreover, if the log-moment condition \eqref{cond1} holds, then the same holds true for the limiting distributions $\mu^{(1)}$ and $I^{\natural}_\infty$.

Using exactly the same arguments but with $R>c_4^{-1}\pi$ and   $t_0(R):=c_2^{-1}\ln\left(\frac{c_4R}{\pi}\right)>0$, we deduce that for any $t\geq t_0(R)$
\[\int_{D^c_R}|\widehat{\mu^\natural}_t(\lambda)| \ud \lambda =\int_{D^c_R}\exp\left\{-2{t_0}(R)\kappa\big(c_4 \pi^{-1}e^{-c_2t_0(R)}|\lambda|\big) \right\}\ud \lambda,
\]
which after change of variable and using our assumptions  on $\kappa$, allow us to deduce  the second part of  condition {\bf (H)}.


\section{Proofs}\label{proofs}
\subsection{Preliminaries}
From the so-called L\'evy-It\^o decomposition, we can express the L\'evy processes $(\xi_t, t\geq 0)$  as the sum of two independent L\'evy processes, in other words,
\begin{equation*}\label{levyito}
\xi_t:=at+\sqrt{\Sigma} B_t+\xi^{(1)}_t \qquad \textrm{for}\quad t\geq 0,
\end{equation*}
where we recall that $a\in\mathbb{R}^d$, $\Sigma$ is a $d$-squared symmetric non-negative definite matrix, $\sqrt{\Sigma}$ is any matrix such that $\langle\sqrt{\Sigma} z, \sqrt{\Sigma} z\rangle=\langle z, \Sigma z\rangle$ for $z\in \mathbb{R}^d$, $B=(B_t, t\geq 0)$ is a $d$-dimensional Brownian motion and $\xi^{(1)}=(\xi^{(1)}_t, t\geq 0)$ is  a pure jump L\'evy process in $\mathbb{R}^d$ which is independent of $B$.
The latter implies that we  can rewrite the solution of the SDE $(\ref{sdem})$ as follows
\begin{equation*}\label{sdetv1}
X^{(\epsilon)}_t=e^{-tQ }x_0+\sqrt{\epsilon}a\int\limits_{0}^{t}{e^{-(t-s)Q}\ud s}+
\sqrt{\epsilon}\int\limits_{0}^{t}{e^{-(t-s)Q}\ud (\sqrt{\Sigma} B_s+ \xi^{(1)}_s)}\qquad \textrm{for} \quad t\geq 0.
\end{equation*}
Since $\xi^\natural_t=\sqrt{\Sigma} B_t+\xi^{(1)}_t$, for $t\ge 0$,  we identify
\begin{equation*}
I^\natural_t=\int\limits_{0}^{t}{e^{-(t-s)Q}\ud(\sqrt{\Sigma}B_s+ \xi^{(1)}_s)} \qquad \textrm{ for }\quad t\ge 0,
\end{equation*}
and for simplicity, we write $C_t:=\left({I-e^{- tQ}}\right)Q^{-1}a$, for $t\ge 0$.
Then, we write $X^{(\epsilon)}$ as follows
\begin{equation*}\label{sdetv11}
X^{(\epsilon)}_t=e^{- tQ}x_0+\sqrt{\epsilon}C_t+\sqrt{\epsilon}I^\natural_t \qquad \textrm{for} \quad t\geq 0.
\end{equation*}
Assuming that $\xi$ satisfies  the log-moment condition (\ref{cond1}), then $X^{(\epsilon)}_t$ converges in distribution to $X^{(\epsilon)}_\infty$, as $t$ goes to $\infty$. We recall that the  law of  $X^{(\epsilon)}_\infty$ is given by $\mu^{(\epsilon)}$ and  observe that it can be written as
\[
X^{(\epsilon)}_\infty=\sqrt{\epsilon}C_\infty+\sqrt{\epsilon}I^\natural_\infty,
\]
where  $I^\natural_\infty$ denotes the  limiting distribution  of  $I^\natural_t$  as $t$ increases. We also recall that  $I^\natural_\infty$ is 
 $Q$-self-decomposable and, if it is non-degenerate, then its distribution is absolutely continuous with respect to the Lebesgue measure on $\mathbb{R}^d$ (see Yamazato \cite{Ya}). 

Bearing  all this in mind, in order to study  how the process $X^{(\epsilon)}$ converges to its equilibrium distribution $\mu^{(\epsilon)}$ under the total variation distance, as $t$ increases,   we define the auxiliary metric as follows
\begin{equation}\label{aux}
D^{(\epsilon)}(t):=\norm{\left(\frac{1}{\sqrt{\epsilon}}e^{-tQ}x_0+I^\natural_\infty\right)-I^\natural_\infty},
\end{equation}
and introduce the error term 
\begin{equation}\label{errores}
R(t):=\norm{\left(C_t+I^\natural_t\right)-X^{(1)}_\infty},
\end{equation}
which does not depend on $\epsilon$.
\begin{lemma}\label{lemmadif}For any  $\epsilon>0$ and $t> 0$, we have
\begin{equation}\label{cutu1}
\left|d^{(\epsilon)}(t)-D^{(\epsilon)}(t)\right|\leq R(t).
\end{equation}
\end{lemma}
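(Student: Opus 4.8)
The plan is to reduce the statement to three elementary properties of the total variation distance: its invariance under the dilation $x\mapsto\sqrt{\epsilon}\,x$ (which is exactly Lemma \ref{sca}), its invariance under deterministic translations, and the triangle inequality for $\norm{\cdot}$ regarded as a metric on probability laws. No coupling is needed, since $d^{(\epsilon)}(t)$, $D^{(\epsilon)}(t)$ and $R(t)$ are by definition functions of marginal laws only. The role of $R(t)$ will be precisely that of the mismatch between the "true" drift-plus-noise vector at time $t$ and its stationary counterpart $X^{(1)}_\infty$.

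First I would put $d^{(\epsilon)}(t)$ into an $\epsilon$-free form. Starting from $X^{(\epsilon)}_t=e^{-tQ}x_0+\sqrt{\epsilon}C_t+\sqrt{\epsilon}I^\natural_t=\sqrt{\epsilon}\bigl(\tfrac{1}{\sqrt{\epsilon}}e^{-tQ}x_0+C_t+I^\natural_t\bigr)$ and $X^{(\epsilon)}_\infty=\sqrt{\epsilon}\,C_\infty+\sqrt{\epsilon}\,I^\natural_\infty=\sqrt{\epsilon}\,X^{(1)}_\infty$, the scaling invariance of $\norm{\cdot}$ gives
\[
d^{(\epsilon)}(t)=\norm{\Bigl(\tfrac{1}{\sqrt{\epsilon}}e^{-tQ}x_0+C_t+I^\natural_t\Bigr)-X^{(1)}_\infty}.
\]
Next, writing $u:=\tfrac{1}{\sqrt{\epsilon}}e^{-tQ}x_0$ (a deterministic vector) and recalling that $C_t$, $C_\infty$ are deterministic with $X^{(1)}_\infty=C_\infty+I^\natural_\infty$, translation invariance by the vector $C_\infty$ rewrites the auxiliary metric \eqref{aux} as
\[
D^{(\epsilon)}(t)=\norm{(u+I^\natural_\infty)-I^\natural_\infty}=\norm{(u+X^{(1)}_\infty)-X^{(1)}_\infty}.
\]
At this point both $d^{(\epsilon)}(t)$ and $D^{(\epsilon)}(t)$ are expressed as total variation distances from laws built out of $u$ to the common law $\mathcal{L}(X^{(1)}_\infty)$.

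Now the triangle inequality finishes the job. On the one hand,
\[
d^{(\epsilon)}(t)\le\norm{(u+C_t+I^\natural_t)-(u+X^{(1)}_\infty)}+\norm{(u+X^{(1)}_\infty)-X^{(1)}_\infty}=R(t)+D^{(\epsilon)}(t),
\]
where translation invariance by $-u$ and $X^{(1)}_\infty=C_\infty+I^\natural_\infty$ identify the first term with $R(t)=\norm{(C_t+I^\natural_t)-X^{(1)}_\infty}$ from \eqref{errores}. On the other hand, the symmetric use of the triangle inequality (inserting the law of $u+C_t+I^\natural_t$ between those of $u+X^{(1)}_\infty$ and $X^{(1)}_\infty$) gives $D^{(\epsilon)}(t)\le R(t)+d^{(\epsilon)}(t)$. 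Combining the two bounds yields $|d^{(\epsilon)}(t)-D^{(\epsilon)}(t)|\le R(t)$, which is \eqref{cutu1}.

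I do not anticipate any real obstacle here: this is a bookkeeping lemma whose only substance is the correct decomposition of $X^{(\epsilon)}_t$ and $X^{(\epsilon)}_\infty$ together with the two invariances of $\norm{\cdot}$. The single point that deserves an explicit citation is the dilation step, for which one invokes Lemma \ref{sca} (invariance of the total variation distance under invertible linear maps, here multiplication by $\sqrt{\epsilon}$); translation invariance is immediate from the definition of $\norm{\cdot}$ as a supremum over Borel sets, and the triangle inequality holds because $\norm{\cdot}$ is a genuine metric on probability measures.
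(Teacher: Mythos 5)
Your proof is correct and follows essentially the same route as the paper: both arguments reduce the claim to the triangle inequality for $\norm{\cdot}$ combined with the translation and dilation invariances collected in Lemma \ref{sca}, differing only in whether the $\sqrt{\epsilon}$-scaling is stripped off before or after the triangle inequality is applied. No gaps.
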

\begin{proof} We first use the triangle inequality to deduce
\begin{equation}\label{today}
\begin{split} 
d^{(\epsilon)}(t)&\leq \norm{\left(e^{- t Q}x_0+X^{(\epsilon)}_\infty\right)-X^{(\epsilon)}_\infty}\\
&+  \norm{\left(e^{- t Q}x_0+\sqrt{\epsilon}C_t+\sqrt{\epsilon}I^\natural_t\right)-\left(e^{- t Q}x_0+X^{(\epsilon)}_\infty\right)}.
\end{split}
\end{equation}
On the one hand,  from  Lemma \ref{sca} part (i), we see
\[
\norm{\left(e^{- t Q}x_0+\sqrt{\epsilon}C_t+\sqrt{\epsilon}I^\natural_t\right)-\left(e^{- t Q}x_0+X^{(\epsilon)}_\infty\right)}=\norm{\left(\sqrt{\epsilon}C_t+\sqrt{\epsilon}I^\natural_t\right)-X^{(\epsilon)}_\infty}.
\]
Recalling that  $X^{(\epsilon)}_\infty=\sqrt{\epsilon}C_\infty+\sqrt{\epsilon}I^\natural_\infty$, then from  Lemma \ref{sca}  part (ii),  we have
\[
\norm{\left(\sqrt{\epsilon}C_t+\sqrt{\epsilon}I^\natural_t\right)-X^{(\epsilon)}_\infty}=\norm{\left(C_t+I^\natural_t\right)-
\left(C_\infty+I^\natural_\infty \right)
}=R(t).
\]
On the other hand since $X^{(\epsilon)}_\infty=\sqrt{\epsilon}X^{(1)}_\infty$, we apply   Lemma \ref{sca} part (iii) to deduce
\[
\norm{\left(e^{- t Q}x_0+X^{(\epsilon)}_\infty\right)-X^{(\epsilon)}_\infty}=\norm{\left(\frac{1}{\sqrt{\epsilon}}e^{-t Q}x_0+X^{(1)}_\infty\right)-X^{(1)}_\infty}=D^{(\epsilon)}(t).
\]
Putting all pieces together in inequality \eqref{today}, allow us to get the following inequality
\[
d^{(\epsilon)}(t)\leq R(t)+D^{(\epsilon)}(t) \qquad \textrm{for} \quad t> 0.
\]
Similarly,  we have
\[
\begin{split}
D^{(\epsilon)}(t)&=\norm{\left(e^{- tQ}x_0+\sqrt{\epsilon}I^\natural_\infty\right)-\sqrt{\epsilon}I^\natural_\infty}\\
&\leq \norm{\left(e^{- tQ}x_0+\sqrt{\epsilon}C_\infty+\sqrt{\epsilon}I^\natural_\infty\right)-X_t^{(\epsilon)}}+\norm{X_t^{(\epsilon)}-X_\infty^{(\epsilon)}}\\
& =R(t)+d^{(\epsilon)}(t),\\
\end{split}
\]
where the first  and the last identities follow from   Lemma \ref{sca} (parts (i), (ii) and (iii)); and  the second inequality follows from the triangle inequality.
The desired result now follows form both inequalities.
\end{proof}

Therefore, our approach for proving the main result consists in determining the cut-off phenomenon (window  and/or profile respectively) for the auxiliary distance \eqref{aux} and  that the 
error term \eqref{errores} vanishes, as $t$ goes to infinity. The latter, together with inequality \eqref{cutu1}  implies the cut-off phenomenon  for the distance $d^{(\epsilon)}$ as we will see in Subsection \ref{pruebita}.

\subsection{Auxiliary metric}
For a better understanding of our arguments,  we study separately the behaviour of the auxiliary metric $D^{(\epsilon)}$. In order to do so, we use  the asymptotic behaviour in Lemma \ref{jara}.

\begin{proposition}\label{gen2}
Let $Q\in \mathcal{M}^+(d)$. Using the same notation as  in  Lemma \ref{jara}, we  let $0<\epsilon\ll 1$ 
such that 
\[
t_\epsilon:=(2\gamma)^{-1}\ln\left(\nicefrac{1}{\epsilon}\right)+
\frac{\ell-1}{\gamma}
\ln\left(\ln\left(\nicefrac{1}{\epsilon}\right)\right)>0\quad
\mathrm{  and  }\quad
w_\epsilon:=\gamma^{-1}+o_\epsilon(1)>0,
\]
where $\lim\limits_{\epsilon\rightarrow 0}o_\epsilon(1)=0$. Moreover, if the density of the random variable $I^\natural_\infty$  is  continuous almost everywhere with respect to the Lebesgue measure and
\begin{itemize}
\item[i)]  if $\theta_k=0$ for every $k\in \{1,\ldots,m\}$, then
\[
\begin{split}
\lim\limits_{\epsilon\rightarrow 0}D^{(\epsilon)}(t_\epsilon+cw_\epsilon)=
\norm{\left((2\gamma)^{1-\ell}e^{-c}v(x_0)+I^\natural_\infty\right)-I^\natural_\infty},
\end{split}
\]
for every $c\in \mathbb{R}$, where $v(x_0):=\sum_{k=1}^{m} v_k$,
\item[ii)] 
if $\theta_k\not=0$ for some $k\in \{1,\ldots,m\}$, we have for each $c\in \mathbb{R}$ that there exists subsequences
$(t_{\epsilon^{\prime}}+cw_{\epsilon^{\prime}}, \epsilon^{\prime}>0)$ of $(t_\epsilon+cw_\epsilon, \epsilon>0)$ such that
\[
\tilde{v}_c(x_0):=\lim\limits_{{\epsilon^{\prime}}\rightarrow 0}\sum\limits_{k=1}^{m} e^{i  (t_{\epsilon^{\prime}}+cw_{\epsilon^{\prime}})\theta_k} v_k \qquad \textrm{exists,}
\]
and is different from the null vector; and moreover 
\[
\begin{split}
\lim\limits_{\epsilon^{\prime}\rightarrow 0} D^{(\epsilon^{\prime})}(t_{\epsilon^{\prime}}+cw_{\epsilon^{\prime}})=
\norm{\left((2\gamma)^{1-\ell}e^{-c}\tilde{v}_c(x_0)+I^\natural_\infty\right)-I^\natural_\infty}.
\end{split}
\]
\end{itemize}
\end{proposition}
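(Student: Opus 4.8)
\medskip
\noindent\textbf{Proof strategy.} The plan is to identify, for $t=t_\epsilon+cw_\epsilon$, the precise limit of the translation vector $\tfrac{1}{\sqrt{\epsilon}}e^{-tQ}x_0$ that appears in $D^{(\epsilon)}(t)=\norm{\big(\tfrac{1}{\sqrt{\epsilon}}e^{-tQ}x_0+I^\natural_\infty\big)-I^\natural_\infty}$, and then to pass to the limit using the continuity of the total variation distance under translations. Since $t_\epsilon+cw_\epsilon\to\infty$ as $\epsilon\to0$, Lemma \ref{jara} applies and yields
\[
\frac{1}{\sqrt{\epsilon}}e^{-tQ}x_0=\Bigg(\frac{1}{\sqrt{\epsilon}}\cdot\frac{t^{\ell-1}}{e^{\gamma t}}\Bigg)\Bigg(\sum_{k=1}^{m}e^{it\theta_k}v_k+\rho(t)\Bigg),\qquad \rho(t)\to0\ \text{ as }\ t\to\infty .
\]
First I would compute the scalar prefactor. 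From the explicit form of $t_\epsilon$ one gets $e^{\gamma t_\epsilon}=\epsilon^{-1/2}\big(\ln(\nicefrac{1}{\epsilon})\big)^{\ell-1}$ and $t_\epsilon^{\,\ell-1}=(2\gamma)^{1-\ell}\big(\ln(\nicefrac{1}{\epsilon})\big)^{\ell-1}\big(1+o_\epsilon(1)\big)$; here the $\ln\ln(\nicefrac{1}{\epsilon})$ correction in $t_\epsilon$ is of lower order and is precisely what compensates the polynomial factor $t^{\ell-1}$. Since $w_\epsilon=\gamma^{-1}+o_\epsilon(1)$ and $w_\epsilon/t_\epsilon\to0$, this gives $e^{\gamma(t_\epsilon+cw_\epsilon)}=\epsilon^{-1/2}\big(\ln(\nicefrac{1}{\epsilon})\big)^{\ell-1}e^{c}\big(1+o_\epsilon(1)\big)$ and $(t_\epsilon+cw_\epsilon)^{\ell-1}=t_\epsilon^{\,\ell-1}\big(1+o_\epsilon(1)\big)$, so the scalar prefactor converges to $(2\gamma)^{1-\ell}e^{-c}$. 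Combining this with the boundedness of $\sum_{k=1}^{m}e^{it\theta_k}v_k$ (again Lemma \ref{jara}) and with $\rho(t)\to0$ yields
\[
\frac{1}{\sqrt{\epsilon}}e^{-(t_\epsilon+cw_\epsilon)Q}x_0-(2\gamma)^{1-\ell}e^{-c}\sum_{k=1}^{m}e^{i(t_\epsilon+cw_\epsilon)\theta_k}v_k\longrightarrow0\qquad\text{ as }\ \epsilon\to0 .
\]

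\medskip
\noindent Next I would split into the two cases. In case (i), $\theta_k=0$ for every $k$, so $\sum_{k=1}^{m}e^{i(t_\epsilon+cw_\epsilon)\theta_k}v_k=\sum_{k=1}^{m}v_k=v(x_0)$ is constant, and hence $\tfrac{1}{\sqrt{\epsilon}}e^{-(t_\epsilon+cw_\epsilon)Q}x_0\to(2\gamma)^{1-\ell}e^{-c}v(x_0)$. In case (ii), the family $\big(\sum_{k=1}^{m}e^{i(t_\epsilon+cw_\epsilon)\theta_k}v_k,\ \epsilon>0\big)$ is bounded in $\mathbb{C}^d$ by $\sum_{k=1}^{m}|v_k|$, so by the Bolzano--Weierstrass theorem every sequence $\epsilon\downarrow0$ admits a subsequence $\epsilon'\downarrow0$ along which it converges to some $\tilde v_c(x_0)$; the strict lower bound $0<\liminf_{t\to\infty}\big|\sum_{k=1}^{m}e^{it\theta_k}v_k\big|$ from Lemma \ref{jara} forces $\tilde v_c(x_0)\neq0$, and along that subsequence $\tfrac{1}{\sqrt{\epsilon'}}e^{-(t_{\epsilon'}+cw_{\epsilon'})Q}x_0\to(2\gamma)^{1-\ell}e^{-c}\tilde v_c(x_0)$.

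\medskip
\noindent I would then pass to the limit in $D^{(\epsilon)}$ itself. Writing $\phi(z):=\norm{(z+I^\natural_\infty)-I^\natural_\infty}$ for $z\in\mathbb{R}^d$, one has $D^{(\epsilon)}(t_\epsilon+cw_\epsilon)=\phi\big(\tfrac{1}{\sqrt{\epsilon}}e^{-(t_\epsilon+cw_\epsilon)Q}x_0\big)$, so the conclusion follows once $\phi$ is known to be continuous on $\mathbb{R}^d$. To see this, let $f_\infty$ denote the density of $I^\natural_\infty$, which by hypothesis is continuous Lebesgue-almost everywhere; then for $z_n\to z$ the translates $f_\infty(\cdot-z_n)$ converge to $f_\infty(\cdot-z)$ pointwise almost everywhere, and since all of these are probability densities, Scheff\'e's Lemma gives $\norm{(z_n+I^\natural_\infty)-(z+I^\natural_\infty)}\to0$; the reverse triangle inequality for the total variation distance then yields $\phi(z_n)\to\phi(z)$. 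Applying this with $z_n=\tfrac{1}{\sqrt{\epsilon}}e^{-(t_\epsilon+cw_\epsilon)Q}x_0$ in case (i), and along the subsequence $\epsilon'$ in case (ii), gives exactly the two stated limits.

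\medskip
\noindent The bookkeeping of the $o_\epsilon(1)$ errors in the scalar prefactor is routine. The step I expect to be the main obstacle is the continuity of $\phi$ --- specifically, checking that \emph{almost everywhere} continuity of $f_\infty$, rather than continuity everywhere, still suffices to run the Scheff\'e argument --- together with, in case (ii), the verification via the $\liminf$ bound of Lemma \ref{jara} that the subsequential limits $\tilde v_c(x_0)$ are nonzero, which is what makes the statement of part (ii) meaningful.
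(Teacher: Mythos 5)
Your proof is correct and essentially mirrors the paper's own argument: the same scalar-prefactor computation, the same appeal to Lemma \ref{jara} to identify the limiting translation vector and (via the strictly positive $\liminf$) to rule out the null vector as a subsequential limit in case (ii), and the same use of Scheff\'e's Lemma. You merely repackage the Scheff\'e step as continuity of the map $z\mapsto\norm{(z+I^\natural_\infty)-I^\natural_\infty}$, whereas the paper carries out the triangle-inequality bookkeeping through the intermediate quantities $\widetilde R^{(\epsilon)}$ and $\widetilde D^{(\epsilon)}$.
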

\begin{proof} We first prove part  (i). Let us  define
 \[\widetilde{R}^{(\epsilon)}(t):=\norm{\left(\frac{1}{\sqrt{\epsilon}}{e^{-tQ }x_0}+I^\natural_\infty\right)-\left(\frac{t^{\ell-1}}{e^{\gamma t}\sqrt{\epsilon}}\sum\limits_{k=1}^{m} e^{i \theta_k t} v_k+I^\natural_\infty\right)}, \] and
 \[\widetilde{D}^{(\epsilon)}(t):=\norm{\left(\frac{t^{\ell-1}}{e^{\gamma t}\sqrt{\epsilon}}\sum\limits_{k=1}^{m} e^{i \theta_k t} v_k+I^\natural_\infty\right)-I^\natural_\infty}.\]
 Recalling the definition of the auxiliary metric in \eqref{aux} and using the triangle inequality, we deduce
\[
\begin{split}
D^{(\epsilon)}(t)
&\leq  \norm{\left(\frac{t^{\ell-1}}{e^{\gamma t}}\sum\limits_{k=1}^{m} e^{i \theta_k t} v_k+\sqrt{\epsilon}I^\natural_\infty\right)-\sqrt{\epsilon}I^\natural_\infty}\\
& +\norm{\left(e^{- tQ}x_0+\sqrt{\epsilon}I^\natural_\infty\right)-\left(\frac{t^{\ell-1}}{e^{\gamma t}}\sum\limits_{k=1}^{m} e^{i \theta_k t} v_k+\sqrt{\epsilon}I^\natural_\infty\right)}.
\end{split}
\]
We then apply Lemma \ref{sca} part (iii) in order to deduce 
$D^{(\epsilon)}(t)\leq 
\widetilde{R}^{(\epsilon)}(t)+\widetilde{D}^{(\epsilon)}(t)$.
On the other hand, using again the triangle inequality, we obtain
\begin{align*}
\widetilde{D}^{(\epsilon)}(t)
&\leq  \norm{\left(\frac{e^{- tQ}x_0}{\sqrt{\epsilon}}+I^\natural_\infty\right)-I^\natural_\infty}\\
& +\norm{\left(\frac{t^{\ell-1}}{e^{\gamma t}\sqrt{\epsilon}}\sum\limits_{k=1}^{m} e^{i \theta_k t} v_k+I^\natural_\infty\right)-\left(\frac{e^{- tQ}x_0}{\sqrt{\epsilon}}+I^\natural_\infty\right)}.
\end{align*}
Similarly as before, we apply Lemma \ref{sca} part  (iii) and deduce
$
\widetilde{D}^{(\epsilon)}(t)\leq 
\widetilde{R}^{(\epsilon)}(t)+D^{(\epsilon)}(t)$.
Putting all pieces together,  we get
\begin{equation}\label{desi1}
\left| D^{(\epsilon)}(t)-\widetilde{D}^{(\epsilon)}(t)  \right|\leq \widetilde{R}^{(\epsilon)}(t).
\end{equation}
Next, from  Lemma \ref{sca} part (i), we observe
 \[\widetilde{R}^{(\epsilon)}(t)
=\norm{\left(\frac{t^{\ell-1}}{e^{\gamma t}\sqrt{\epsilon}}\left(\frac{e^{\gamma t}e^{-tQ }x_0}{t^{\ell-1}}-\sum\limits_{k=1}^{m} e^{i \theta_k t} v_k\right)+I^\natural_\infty\right)-I^\natural_\infty}.\]
On the other hand, straightforward computations led us to
\begin{equation}\label{limite}
\lim\limits_{\epsilon\rightarrow 0}\frac{(t_\epsilon+cw_\epsilon)^{\ell-1}{e^{-\gamma (t_\epsilon+cw_\epsilon)}}}{\sqrt{\epsilon}}=(2\gamma)^{1-\ell}e^{-c},
\end{equation}
for any $c\in \mathbb{R}$, which implies, together with
Lemma \ref{jara}, that 
\[
\lim\limits_{\epsilon\rightarrow 0}\frac{(t_\epsilon+cw_\epsilon)^{\ell-1}}{e^{\gamma (t_\epsilon+cw_\epsilon)}\sqrt{\epsilon}}\left(\frac{e^{\gamma (t_\epsilon+cw_\epsilon)}e^{-(t_\epsilon+cw_\epsilon)Q }x_0}{(t_\epsilon+cw_\epsilon)^{\ell-1}}-\sum\limits_{k=1}^{m} e^{i \theta_k (t_\epsilon+cw_\epsilon)} v_k\right)=0
\]
for every $c\in \mathbb{R}$.
Therefore, Scheff\'e's  Lemma allow us to deduce
\begin{equation}
\label{resid}
\lim\limits_{\epsilon\rightarrow 0}{\widetilde{R}^{(\epsilon)}(t_\epsilon+cw_\epsilon)}=0
\;\; \textrm{ for any } c\in \mathbb{R}.
\end{equation}
Since $\theta_k=0$ for every $k\in \{1,\ldots,m\}$,  Scheff\'e's  Lemma together with relation \eqref{limite} imply
\[
\lim\limits_{\epsilon\rightarrow 0}\widetilde{D}^{(\epsilon)}(t_\epsilon+cw_\epsilon)=\norm{\left((2\gamma)^{1-\ell}e^{-c}v(x_0)+I^\natural_\infty\right)-I^\natural_\infty},
\]
for every $c\in \mathbb{R}$.
Using inequality \eqref{desi1} and Scheff\'e's  Lemma again, we deduce
\[
\lim\limits_{\epsilon\rightarrow 0}{D}^{(\epsilon)}(t_\epsilon+cw_\epsilon)=\norm{\left((2\gamma)^{1-\ell}e^{-c}v(x_0)+I^\natural_\infty\right)-I^\natural_\infty}=
G_{x_0}(c)
\]
for every $c\in \mathbb{R}$.
Finally, we use again Scheff\'e's Lemma to derive  $\lim\limits_{c \rightarrow \infty}{G_{x_0}(c)}=0$.
Using Lemma \ref{a5}, we obtain $\lim\limits_{c \rightarrow -\infty}{G_{x_0}(c)}=1$. The proof of part (i) is now complete.

The proof of part (ii) follows from similar arguments as those used above by taking a subsequence $(t_{\epsilon^{\prime}}+cw_{\epsilon^\prime}, \epsilon^{\prime}>0)$ of the sequence $(t_\epsilon+cw_\epsilon, \epsilon>0)$. Indeed, we first observe that  inequality \eqref{desi1} and the limit \eqref{resid}  always holds.  On the one hand, we also observe that
\begin{equation}\label{cebollon}
\lim_{\epsilon\rightarrow 0}\sum\limits_{k=1}^{m} e^{i \theta_k (t_{\epsilon}+cw_{\epsilon})} v_k \qquad \textrm{may not exist.}
\end{equation}

On the other hand, from Lemma \ref{jara} we have
\[
0<\liminf_{t\to\infty}\left|\sum\limits_{k=1}^{m} e^{i \theta_k t} v_k\right|\le \limsup_{t\to\infty}\left|\sum\limits_{k=1}^{m} e^{i \theta_k t} v_k\right|<\infty,
\]
which allows us to deduce that the null vector is not in the basin of attraction which is defined by
\begin{equation}\label{Bassin}
\mathrm{Bas}:=\left\{v\in \mathbb{R}^d: \textrm{there exists a sequence } (t_j)\uparrow \infty  \textrm{ and } \lim_{t_j\to\infty}\sum\limits_{k=1}^{m} e^{i \theta_k t_j} v_k =v\right\},
\end{equation}
and turns out to be  not empty since the vectors $\{v_k, k=1,\ldots, m\}$ are linearly independent.

Next, let 
$\overline{D}=\limsup\limits_{\epsilon \to 0}D^{(\epsilon)}(t_\epsilon+cw_\epsilon)$ and we take  a subsequence $(t_{\epsilon^{\prime}}+cw_{\epsilon^\prime}, \epsilon^{\prime}>0)$ of $(t_\epsilon+cw_\epsilon, \epsilon>0)$
 such that 
$\overline{D}=\lim\limits_{\epsilon^{\prime} \to 0}D^{(\epsilon^{\prime})}(t_{\epsilon^{\prime}}+cw_{\epsilon^{\prime}})$.
We write 
\[
v(t,x_0)=\sum\limits_{k=1}^{m} e^{i \theta_k t} v_k\
\]
and deduce $|v(t_{\epsilon^{\prime}}+cw_{\epsilon^{\prime}},x_0)|\leq \sum\limits_{k=1}^{m} |v_k|$ for any $\epsilon^\prime> 0$. By  Bolzano-Weierstrass' Theorem, we get
that there 
exists a subsequence $(t_{\epsilon^{\prime\prime}}+cw_{\epsilon^{\prime\prime}}, \epsilon^{\prime\prime}>0)$ of  $(t_{\epsilon^{\prime}}+cw_{\epsilon^{\prime}}, \epsilon^{\prime}>0)$
such that 
$\lim\limits_{\epsilon^{\prime\prime} \to 0} v(t_{\epsilon^{\prime\prime}}+cw_{\epsilon^{\prime\prime}},x_0)=\tilde{v}(x_0)\in \mathrm{Bas}$.
From Scheff\'e's Lemma, we get
\[
\begin{split}
\lim\limits_{\epsilon^{\prime\prime}\rightarrow 0} D^{(\epsilon^{\prime\prime})}(t_{\epsilon^{\prime\prime}}+cw_{\epsilon^{\prime\prime}})=
\norm{\left((2\gamma)^{1-\ell}e^{-c}\tilde{v}(x_0)+I^\natural_\infty\right)-I^\natural_\infty}.
\end{split}
\]
Since 
$\overline{D}=\lim\limits_{\epsilon^{\prime\prime} \to 0}D^{(\epsilon^{\prime\prime})}(t_{\epsilon^{\prime\prime}}+cw_{\epsilon^{\prime\prime}})$, 
then 
\[
\limsup\limits_{\epsilon \to 0}D^{(\epsilon)}(t_\epsilon+cw_\epsilon)=\norm{\left((2\gamma)^{1-\ell}e^{-c}\tilde{v}(x_0)+I^\natural_\infty\right)-I^\natural_\infty}.
\]
The proof of part (ii) is now complete.
\end{proof}
It is important to note that \eqref{cebollon} breaks down the existence of a profile function and only windows cut-off for the auxiliary distance $D^{(\epsilon)}$ can be hoped. Indeed, from the  previous proof, we have deduced
\begin{equation}\label{limsupcebollon}
\limsup\limits_{\epsilon \to 0}D^{(\epsilon)}(t_\epsilon+cw_\epsilon)=\norm{\left((2\gamma)^{1-\ell}e^{-c}\tilde{v}(x_0)+I^\natural_\infty\right)-I^\natural_\infty},
\end{equation}
where $\tilde{v}(x_0)\in \mathrm{Bas}$. Similarly,  we can obtain 
\begin{equation}\label{liminfcebollon}
\lim\inf\limits_{\epsilon \to 0}D^{(\epsilon)}(t_\epsilon+cw_\epsilon)=\norm{\left((2\gamma)^{1-\ell}e^{-c}\hat{v}(x_0)+I^\natural_\infty\right)-I^\natural_\infty},
\end{equation}
where $\hat{v}(x_0)\in \mathrm{Bas}$.
Since $\lim\limits_{c\to \infty}e^{-c}\tilde{v}(x_0)=0$, we use Scheff\'e's Lemma and   conclude
\[
\lim\limits_{c\to \infty}\limsup\limits_{\epsilon \to 0}D^{(\epsilon)}(t_\epsilon+cw_\epsilon)=0.
\]
By observing  that $\lim\limits_{c\to -\infty}\left|e^{-c}\hat{v}(x_0)\right|=\infty$,
then Lemma \ref{a5} implies
\[
\lim\limits_{c\to -\infty}\liminf\limits_{\epsilon \to 0}D^{(\epsilon)}(t_\epsilon+cw_\epsilon)=1.
\]

\subsection{Proof of the main result}\label{pruebita}
\begin{proposition}\label{jcppp11}
Assume  that  the log-moment condition \eqref{cond1} and  {\bf (H)} hold,  then
\[
\lim\limits_{t\to \infty}R(t)=\lim_{t\to \infty}\norm{\left(C_t+I^\natural_t\right)-X_\infty^{(1)}}=0.
\]
\end{proposition}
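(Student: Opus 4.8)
The plan is to peel off the deterministic drift and reduce the statement to the total-variation convergence of $I^{\natural}_t$ to $I^{\natural}_\infty$. Since every eigenvalue of $Q\in\mathcal{M}^{+}(d)$ has positive real part we have $e^{-tQ}\to 0$ as $t\to\infty$, hence $C_t=(I-e^{-tQ})Q^{-1}a\to C_\infty$. Writing $X^{(1)}_\infty=C_\infty+I^{\natural}_\infty$, adding and subtracting $C_t+I^{\natural}_\infty$, and using the triangle inequality together with the translation invariance of the total variation distance (Lemma \ref{sca}(i)), I would bound
\[
R(t)=\norm{(C_t+I^{\natural}_t)-(C_\infty+I^{\natural}_\infty)}\le \norm{I^{\natural}_t-I^{\natural}_\infty}+\norm{\big((C_t-C_\infty)+I^{\natural}_\infty\big)-I^{\natural}_\infty},
\]
so it suffices to show that both terms on the right tend to $0$ as $t\to\infty$.

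For the second term I would use that, under {\bf (H)}, $\widehat{\mu^\natural}_t$ is integrable for every $t>0$. Moreover $|\widehat{\mu^\natural}_t(\lambda)|=\exp\{\int_0^t\Re\psi(e^{-sQ^T}\lambda)\,\ud s\}$ (passing from $\xi$ to $\xi^{\natural}$ removes only an imaginary term from the characteristic exponent) and $\Re\psi\le 0$, so $t\mapsto|\widehat{\mu^\natural}_t(\lambda)|$ is non-increasing; letting $t\to\infty$ gives $|\widehat{\mu^\natural}_\infty(\lambda)|\le|\widehat{\mu^\natural}_1(\lambda)|$, hence $\widehat{\mu^\natural}_\infty$ is integrable as well and, by Fourier inversion (Proposition 28.1 in \cite{Sa}), $I^{\natural}_\infty$ has a bounded continuous density $f_\infty\in L^1(\mathbb{R}^d)$. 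The second term then equals $\tfrac12\int_{\mathbb{R}^d}|f_\infty(x-(C_t-C_\infty))-f_\infty(x)|\,\ud x$, which vanishes as $t\to\infty$ because $C_t-C_\infty\to 0$ and translation is continuous on $L^1(\mathbb{R}^d)$.

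For the first term I would invoke Scheff\'e's Lemma. For every $t>0$ the integrability of $\widehat{\mu^\natural}_t$ gives $I^{\natural}_t$ a density $f_t(x)=(2\pi)^{-d}\int_{\mathbb{R}^d}e^{-i\langle\lambda,x\rangle}\widehat{\mu^\natural}_t(\lambda)\,\ud\lambda$. Since the log-moment condition \eqref{cond1} guarantees that $I^{\natural}_\infty$ is well defined, $I^{\natural}_t\Rightarrow I^{\natural}_\infty$, i.e. $\widehat{\mu^\natural}_t(\lambda)\to\widehat{\mu^\natural}_\infty(\lambda)$ for every $\lambda$; combining this with the domination $|\widehat{\mu^\natural}_t(\lambda)|\le|\widehat{\mu^\natural}_1(\lambda)|\in L^1(\mathbb{R}^d)$ valid for $t\ge 1$, dominated convergence in the inversion formula yields $f_t(x)\to f_\infty(x)$ for every $x$. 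As $f_t$ and $f_\infty$ are probability densities, Scheff\'e's Lemma gives $\norm{I^{\natural}_t-I^{\natural}_\infty}=\tfrac12\int_{\mathbb{R}^d}|f_t-f_\infty|\,\ud x\to 0$, and plugging the two estimates into the displayed inequality yields $R(t)\to 0$. (An alternative to Scheff\'e, closer to Masuda-type arguments, is to split $\int|f_t-f_\infty|$ over $D_R$ and $D_R^c$: uniform convergence of $f_t$ to $f_\infty$ on compacts handles $D_R$, while the uniform tail bound $\lim_{R\to\infty}\sup_{s>t_0(R)}\int_{D_R^c}|\widehat{\mu^\natural}_s(\lambda)|\,\ud\lambda=0$ from the second part of {\bf (H)} handles $D_R^c$.)

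The only genuine obstacle is upgrading the weak convergence $I^{\natural}_t\Rightarrow I^{\natural}_\infty$ to convergence in total variation, and this is precisely where {\bf (H)} enters: its integrability clause lets us represent all the laws by continuous densities, and the monotonicity of $|\widehat{\mu^\natural}_t|$ it entails supplies the $L^1$-domination needed to pass to the limit under the Fourier inversion integral. The drift term is harmless because it is deterministic with asymptotically negligible increment, which is absorbed by the $L^1$-continuity of translations once $I^{\natural}_\infty$ is known to be absolutely continuous.
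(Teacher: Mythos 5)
Your proof is correct, and the decomposition (peel off $C_t$, reduce to $\norm{I^\natural_t-I^\natural_\infty}\to 0$ plus a deterministic translation) is exactly the paper's. Where you genuinely diverge is in the key step of upgrading $I^\natural_t\Rightarrow I^\natural_\infty$ to total-variation convergence. The paper proves $\int_{\mathbb{R}^d}|\widehat{\mu^\natural}_t(\lambda)-\widehat{\mu^\natural}_\infty(\lambda)|\,\ud\lambda\to 0$ by splitting the integral over $D_R$ and $D_R^c$: uniform convergence of characteristic functions on compacts handles $D_R$, and the \emph{second} clause of {\bf (H)} (the uniform tail bound with $t_0(R)$) handles $D_R^c$; this is the alternative you mention parenthetically. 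Your primary route instead exploits the monotonicity $|\widehat{\mu^\natural}_t(\lambda)|=\exp\{\int_0^t\Re\psi(e^{-sQ^T}\lambda)\,\ud s\}\le|\widehat{\mu^\natural}_1(\lambda)|$ for $t\ge 1$ (valid since $\Re\psi\le 0$, and the drift subtraction only alters the imaginary part), and applies dominated convergence directly in the Fourier inversion integral to get $f_t(x)\to f_\infty(x)$ pointwise, then Scheff\'e. This is a real economy: it uses only the integrability clause of {\bf (H)} and shows that, for this proposition, the second clause is dispensable --- the domination by $|\widehat{\mu^\natural}_1|$ already supplies the tail control that the paper extracts from the $\sup_{s>t_0(R)}$ condition. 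The monotonicity you invoke is the same inequality the paper itself records (in the form $|\widehat{\mu^\natural}_\infty|\le|\widehat{\mu^\natural}_t|$), so nothing new is assumed. Your treatment of the drift term via $L^1$-continuity of translations is also a slight variant of the paper's Scheff\'e argument (which uses continuity of $f_\infty$), but both are standard and correct.
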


\begin{proof} We first observe that  $X_\infty^{(1)}$ has the same distribution as $C_\infty+I^\natural_\infty$. From the triangle inequality, we have
\[
\begin{split}
\norm{\left(C_t+I^\natural_t\right)-X_\infty^{(1)}} \leq &
\norm{\left(C_t+I^\natural_t\right)-\left(C_t+I^\natural_\infty\right)}+\norm{\left(C_t+I^\natural_\infty\right)-\left(C_\infty+I^\natural_\infty\right)}.
\end{split}
\]
From our assumptions $I^\natural_\infty$ has  a continuous  density  and since $\lim\limits_{t\rightarrow \infty}C_t=C_\infty$, 
an application of   Scheff\'e's Lemma  allows us to deduce
\[
\lim\limits_{t\rightarrow \infty}\norm{\left(C_t+I^\natural_\infty\right)-\left(C_\infty+I^\natural_\infty\right)}=0.
\]
It remains to prove
\[
\lim\limits_{t\rightarrow \infty}\norm{\left(C_t+I^\natural_t\right)-\left(C_t+I^\natural_\infty\right)}=0,
\]
which is equivalent, according to Lemma \ref{sca} part (i), to
\[
\lim\limits_{t\rightarrow \infty}\norm{I_t^\natural-I^\natural_\infty}=0.
\]
From our hypothesis,  we have that $I^\natural_t$ has a continuous density $f_t(x)$ that goes to $0$ as $|x|$ goes to $\infty$. Recalling that 
\[
|\widehat{\mu^\natural}_\infty(\lambda)|\leq |\widehat{\mu^\natural}_{t}(\lambda)| \qquad \textrm{ for any }\quad t>0\quad \textrm{ and } \quad \lambda\in \mathbb{R}^d,
\]
we also deduce that $I^\natural_\infty$ has a continuous density $f_\infty(x)$ that goes to $0$ as $|x|$ goes to $\infty$, under our assumptions.
By the Fourier inversion formula, we know, for Lebesgue almost everywhere $x\in \mathbb{R}^d$, 
\begin{equation*}
f_t(x)=\frac{1}{\left(2\pi\right)^{d}}\int_{\mathbb{R}^d}{e^{-i\langle x,\lambda \rangle}\widehat{\mu^\natural}_{t}(\lambda)\ud \lambda}\quad \textrm{and}\quad
f_\infty(x)=\frac{1}{\left(2\pi\right)^{d}}\int_{\mathbb{R}^d}{e^{-i\langle x,\lambda \rangle}\widehat{\mu^\natural}_\infty(\lambda)\ud\lambda}.
\end{equation*}
Therefore,   for Lebesgue almost everywhere $x\in \mathbb{R}^d$, 
\begin{equation*}
\left  |f_t(x)-f_\infty(x)\right | \leq 
\frac{1}{\left(2\pi\right)^{d}}\int_{\mathbb{R}^d}{\left|\widehat{\mu^\natural}_{t}(\lambda)-\widehat{\mu^\natural}_\infty(\lambda)\right| \ud \lambda}.
\end{equation*}
If 
\begin{equation}\label{goal0}
\lim\limits_{t\rightarrow \infty}\int_{\mathbb{R}^d}{\left |\widehat{\mu^\natural}_{t}(\lambda)-\widehat{\mu^\natural}_\infty(\lambda)\right| \ud\lambda}=0,
\end{equation} 
then $\lim\limits_{t\rightarrow \infty}f_t(x)=f_\infty(x)$, for Lebesgue almost everywhere $x\in \mathbb{R}^d$. Hence Scheff\'e's Lemma allow us to deduce
\begin{equation*}
\lim\limits_{t\rightarrow \infty}\norm{I_t^\natural-I^\natural_\infty}=0.
\end{equation*}
In other words, the proof will be completed if we deduce \eqref{goal0}. In order to do so, we take $R>0$ and  introduce a strictly positive constant $t_0(R)$ that only depends on $R$. Thus, we observe
\begin{equation}\label{eqteo}
\int_{\mathbb{R}^d}{\left |\widehat{\mu^\natural}_{t}(\lambda)-\widehat{\mu^\natural}_\infty(\lambda)\right| \ud\lambda}=\int_{D_{R}}{\left |\widehat{\mu^\natural}_{t}(\lambda)-\widehat{\mu^\natural}_\infty(\lambda)\right| \ud\lambda}+\int_{D^c_{R}}{\left |\widehat{\mu^\natural}_{t}(\lambda)-\widehat{\mu^\natural}_\infty(\lambda)\right| \ud\lambda},
\end{equation} 
for $t\ge t_0(R)$,
where we recall $D_R=\{z\in \mathbb{R}^d: |z|\leq R\}$ and $D^{c}_R=\mathbb{R}^d\setminus {D_R}$.
Since $I^\natural_t$ converges in distribution to $I^\natural_\infty$  as $t$ goes to infinity  then $\widehat{\mu^\natural}_{t}(\cdot)$ converges uniformly on compact sets to 
$\widehat{\mu^\natural}_\infty(\cdot)$ as $t$ goes to infinity. Then, for any $R>0$ we have
\begin{eqnarray*}
\lim\limits_{t\rightarrow  \infty}\int_{D_{R}}{\left |\widehat{\mu^\natural}_{t}(\lambda)-\widehat{\mu^\natural}_\infty(\lambda)\right| \ud\lambda}=0.
\end{eqnarray*}
On the other hand for the second term in \eqref{eqteo}, we get, for any $R>0$, 
\begin{eqnarray*}
\limsup\limits_{t\rightarrow  \infty}\int_{D^c_{R}}{\left |\widehat{\mu^\natural}_{t}(\lambda)-\widehat{\mu^\natural}_\infty(\lambda)\right| \ud\lambda}
\leq \limsup\limits_{t\rightarrow  \infty}\int_{D^c_{R}}{\left |\widehat{\mu^\natural}_{t}(\lambda)\right| \ud\lambda}
+
\int_{D^c_{R}}{\left |\widehat{\mu^\natural}_\infty(\lambda)\right| \ud\lambda}.
\end{eqnarray*}
Since for any $t>t_0(R)$ we have
\begin{equation*}
\int_{D^c_{R}}{\left |\widehat{\mu^\natural}_\infty(\lambda)\right| \ud\lambda} \leq \int_{D^c_{R}}{\left |\widehat{\mu^\natural}_{t}(\lambda)\right| \ud\lambda}\le \sup_{s>t_0(R)}\int_{D^c_{R}}{\left |\widehat{\mu^\natural}_{s}(\lambda)\right| \ud\lambda}, 
\end{equation*}
we deduce
\begin{eqnarray*}
\limsup\limits_{t\rightarrow  \infty}\int_{D^c_{R}}{\left |\widehat{\mu^\natural}_{t}(\lambda)-\widehat{\mu^\natural}_\infty(\lambda)\right| \ud\lambda}\le 2\sup_{s>t_0(R)}\int_{D^c_{R}}{\left |\widehat{\mu^\natural}_{s}(\lambda)\right| \ud\lambda}.
\end{eqnarray*}
The latter inequality, together with our assumption ({\bf H}), imply  that as $R$ increases, 
\[
\limsup\limits_{t\rightarrow  \infty}\int_{D^c_{R}}{\left |\widehat{\mu^\natural}_{t}(\lambda)-\widehat{\mu^\natural}_\infty(\lambda)\right| \ud\lambda}= 0, 
\]
and implicitly we obtain   \eqref{goal0}. The proof is now complete.
\end{proof}
At this stage, we have all the tools to prove Theorem \ref{teo2}.
\begin{proof}[Proof of Theorem \ref{teo2}]
 We first prove part (i).  From Lemma \ref{lemmadif}, we have
\begin{align}\label{sand}
\left|d^{(\epsilon)}(t_\epsilon+cw_\epsilon)-D^{(\epsilon)}(t_\epsilon+cw_\epsilon)
\right|\leq R(t_\epsilon+cw_\epsilon),
\end{align}
 and from Proposition \ref{jcppp11}, we know that 
$\lim\limits_{\epsilon\rightarrow 0}R(t_\epsilon+cw_\epsilon)=0$, for any $c\in \mathbb{R}$.
On the other hand, from Proposition \ref{gen2} part (i), we also know
\[
\begin{split}
\lim\limits_{\epsilon\rightarrow 0}D^{(\epsilon)}(t_\epsilon+cw_\epsilon)=
\norm{\left((2\gamma)^{1-\ell}e^{-c}v(x_0)+I^\natural_\infty\right)-I^\natural_\infty},
\end{split}
\]
for any $c\in \mathbb{R}$. Putting all pieces together in inequality \eqref{sand},  the desired result is obtained.

Now, we prove part (ii). Recall from the equalities \eqref{limsupcebollon} and \eqref{liminfcebollon} that
\[
\begin{split}
\liminf\limits_{\epsilon^{}\rightarrow 0} D^{(\epsilon^{})}(t_{\epsilon^{}}+cw_{\epsilon^{}})=
\norm{\left((2\gamma)^{1-\ell}e^{-c}\hat{v}(x_0)+I^\natural_\infty\right)-I^\natural_\infty},
\end{split}
\]
and
\[
\begin{split}
\limsup\limits_{\epsilon^{}\rightarrow 0} D^{(\epsilon^{})}(t_{\epsilon^{}}+cw_{\epsilon^{}})=
\norm{\left((2\gamma)^{1-\ell}e^{-c}\tilde{v}(x_0)+I^\natural_\infty\right)-I^\natural_\infty},
\end{split}
\] 
where $\tilde{v}(x_0), \hat{v}(x_0)\in \mathrm{Bas}$, and  $\mathrm{Bas}$ denotes the basin of attraction which  is defined in \eqref{Bassin}.
On the other hand since, for  any $c\in \mathbb{R}$, 
$\lim\limits_{\epsilon\rightarrow 0}R(t_\epsilon+cw_\epsilon)=0$ (see Proposition \ref{jcppp11}),
 inequality \eqref{sand} allow us to  deduce
\[
0\leq \liminf\limits_{\epsilon\rightarrow 0} d^{(\epsilon^{})}(t_{\epsilon^{}}+cw_{\epsilon^{}})
\leq
\liminf\limits_{\epsilon\rightarrow 0}  D^{(\epsilon^{})}(t_{\epsilon^{}}+cw_{\epsilon^{}})
=
\norm{\left((2\gamma)^{1-\ell}e^{-c}\hat{v}(x_0)+I^\natural_\infty\right)-I^\natural_\infty},
\]
and
\[
\norm{\left((2\gamma)^{1-\ell}e^{-c}\tilde{v}(x_0)+I^\natural_\infty\right)-I^\natural_\infty}=\limsup\limits_{\epsilon\rightarrow 0} D^{(\epsilon^{})}(t_{\epsilon^{}}+cw_{\epsilon^{}})
\leq
\limsup\limits_{\epsilon\rightarrow 0}  d^{(\epsilon^{})}(t_{\epsilon^{}}+cw_{\epsilon^{}})
\leq 1.
\]
Using  Scheff\'e's Lemma, the following limit is obtained
\[
\lim\limits_{c\rightarrow \infty}\liminf\limits_{\epsilon\rightarrow 0} d^{(\epsilon^{})}(t_{\epsilon^{}}+cw_{\epsilon^{}})=0.
\]
Finally, recalling that $\tilde{v}(x_0)\not=0$ and using  Lemma \ref{a5}, we get
\[
\lim\limits_{c\rightarrow -\infty}\limsup\limits_{\epsilon\rightarrow 0} d^{(\epsilon^{})}(t_{\epsilon^{}}+cw_{\epsilon^{}})=1,
\]
which implies the  statement in part (ii) of Theorem \ref{teo2}. This completes the proof.
\end{proof}
Next, we prove Corollary \ref{pt}.
\begin{proof}[Proof of Corollary \ref{pt}]From the same arguments used in the proof of Theorem \ref{teo2} part (ii),  we deduce
\[
\begin{split}
\norm{\left((2\gamma)^{1-\ell}e^{-c}\hat{v}(x_0)+I^\natural_\infty\right)-I^\natural_\infty}
&\leq  \liminf\limits_{\epsilon\rightarrow 0} d^{(\epsilon^{})}(t_{\epsilon^{}}+cw_{\epsilon^{}})\\
&\hspace{-2cm}\leq
\limsup\limits_{\epsilon\rightarrow 0}  d^{(\epsilon^{})}(t_{\epsilon^{}}+cw_{\epsilon^{}})
\leq 
\norm{\left((2\gamma)^{1-\ell}e^{-c}\tilde{v}(x_0)+I^\natural_\infty\right)-I^\natural_\infty},
\end{split}
\]
where $\tilde{v}(x_0), \hat{v}(x_0)\in \mathrm{Bas}$, where $\mathrm{Bas}$ is defined in \eqref{Bassin}.  
Since $\Big|\sum\limits_{k=1}^{m}e^{i\theta_k t}v_k \Big|$ is constant,  we obtain that $|\tilde{v}(x_0)|=|\hat{v}(x_0)|$, and using
the invariance property \eqref{invarianza1} we deduce
\[
\norm{\left((2\gamma)^{1-\ell}e^{-c}\tilde{v}(x_0)+I^\natural_\infty\right)-I^\natural_\infty}=\norm{\left((2\gamma)^{1-\ell}e^{-c}\hat{v}(x_0)+I^\natural_\infty\right)-I^\natural_\infty}.
\]
Therefore,
\[
\lim\limits_{\epsilon\rightarrow 0} d^{(\epsilon^{})}(t_{\epsilon^{}}+cw_{\epsilon^{}})=\norm{\left((2\gamma)^{1-\ell}e^{-c}\tilde{v}(x_0)+I^\natural_\infty\right)-I^\natural_\infty}.
\]
\end{proof}

\subsection{Proofs for the superposition process }
\begin{proof}[Proof of Lemma \ref{opo}]
Let us first fix $\epsilon>0$.
From the L\'evy-It\^o decomposition, for each $j\geq 1$, we have 
\[
\xi^{(j)}_t=a_jt+\sqrt{\sigma_j} B^{(j)}_t+L^{(j)}_t \qquad \textrm{for}\quad t\geq 0,
\] 
where $B^{(j)}=(B^{(j)}_t: t\geq 0)$ is a standard Brownian motion and $L^{(j)}=(L^{(j)}_t: t\geq 0)$ is a pure jump L\'evy process  which is independent of $B^{(j)}$.
Therefore, for each $j\geq 1$, we deduce
\[
X^{(\epsilon,j)}_t=x_je^{-\gamma_j t}+\sqrt{\epsilon}\frac{a_j(1-e^{-\gamma_j t})}{\gamma_j}+\sqrt{\epsilon}\sqrt{\sigma_j}\int\limits_{0}^{t}{e^{-\gamma_j(t-s)}\ud B^{(j)}_s}+
\sqrt{\epsilon}\int\limits_{0}^{t}{e^{-\gamma_j(t-s)}\ud L^{(j)}_s},
\]
for any $t\geq 0$. In other words, for each $t\ge 0$, the r.v. $\chi^{(\epsilon)}_t$ is well-defined if and only if  each term
\[
\sum\limits_{j=1}^{\infty} m_jx_je^{-\gamma_j t}, \qquad   \sum\limits_{j=1}^{\infty}m_j\frac{a_j(1-e^{-\gamma_j t})}{\gamma_j}, \qquad M_t=\sum\limits_{j=1}^{\infty}m_j\sqrt{\sigma_j}\int\limits_{0}^{t}{e^{-\gamma_j(t-s)}\ud B^{(j)}_s},
\]
and 
\[
N_t=\sum\limits_{j=1}^{\infty} m_j\int\limits_{0}^{t}{e^{-\gamma_j(t-s)}\ud L^{(j)}_s},
\]
are well-defined.

The finiteness of the first two terms is clear. Indeed,  from condition \eqref{C2}, we have 
\[
\left|\sum\limits_{j=1}^{\infty}m_jx_je^{-\gamma_j t}\right|\leq
\sum\limits_{j=1}^{\infty}m_j|x_j|e^{-\gamma_j t}\leq
\sum\limits_{j=1}^{\infty}m_j|x_j|<
\infty \quad \textrm{for}\quad t\geq 0. 
\]
For the second term, we observe from the first condition of \eqref{C3} that 
\[
\left|\sum\limits_{j=1}^{\infty}m_j\frac{a_j(1-e^{-\gamma_j t})}{\gamma_j}\right|\leq
\sum\limits_{j=1}^{\infty}\frac{m_j|a_j|}{\gamma_j}{(1-e^{-\gamma_j t})}\leq
\sum\limits_{j=1}^{\infty}\frac{m_j|a_j|}{\gamma_j}<
\infty  \quad \textrm{for}\quad t\geq 0.
\]
For the continuous local martingale term $M:=(M_t,t\geq 0)$, we use its quadratic variation to deduce that $M_t$ is well-defined
 if and only if
\[
\sum\limits_{j=1}^{\infty}
\frac{m^2_j \sigma_j}{2\gamma_j}(1-e^{-2\gamma_j t})<\infty.
\]
The latter is finite if the second condition in \eqref{C3} holds, implying the $M_t$ is well-defined for any $t\ge 0$.

Finally, we analyse the pure jump term.  In order to deduce that the r.v. $N_t$, which is infinitely divisible,  is well-defined for any $t\geq 0$, we need to verify that its characteristic function is also well-defined. In other words,  we need to verify that 
\[
\sum\limits_{j=1}^{\infty}\int\limits_{0}^{t}\psi_j(e^{-\gamma_js}zm_j)\ud s \quad \textrm{ exists for }\quad   z\in \mathbb{R},
\]
where  $\psi_j$ denotes the characteristic exponent of $L^{(j)}$, for $j\ge 1$. In order to do so, we first observe that each 
\[
\int\limits_{0}^{t}{e^{-\gamma_j (t-s)}\ud L^{(j)}_s},
\]
is infinitely divisible with characteristics  $(\lambda^{(j)}_t,0,\nu^{(j)}_t)$, where
\[
\lambda^{(j)}_t=\int\limits_{\mathbb{R}}\pi_j(\ud x)\int\limits_{0}^{t}e^{-\gamma_j s}x\left(\mathbf{1}_{\{|e^{-\gamma_j s }x|\le 1\}}-1_{\{|x|\le 1\}}\right)\ud s \quad\textrm{and}\quad\nu^{(j)}_t(B)=\int\limits_0^t \pi_j(e^{\gamma_j s}B)\ud s,
\]
for any $B\in \mathcal{B}(\mathbb{R})$.
Therefore,
\[
\int\limits_{0}^{t}\psi_{j}(e^{-\gamma_js}zm_j)\ud s= izm_j\lambda^{(j)}_t+\int\limits_{\mathbb{R}}\left(e^{im_jzx}-1-im_jzx\mathbf{1}_{\{|x|\le 1\}}\right)\nu^{(j)}_t(\ud x).
\]
Since
\begin{equation*}\label{taylor}
\left|e^{izx}-1-izx\mathbf{1}_{\{|x|\le 1\}}\right|\leq \frac{1}{2}z^2x^2 \mathbf{1}_{\{|x|\le 1\}}+2 \mathbf{1}_{\{|x|> 1\}}\qquad  \text{ for } x,z\in \mathbb{R}.
\end{equation*}
Similar  computations as those used in  the proof of Theorem 17.5 in \cite{Sa} allow us to  deduce that for any $z\in \mathbb{R}$
\[
\begin{split}
\left|\sum\limits_{j=1}^{\infty} \int\limits_{\mathbb{R}}\left(e^{im_jzx}-1-im_jzx\mathbf{1}_{\{|x|\le 1\}}\right)\nu^{(j)}_t(\ud x)\right | &\leq
\frac{z^2}{4}\sum\limits_{j=1}^{\infty} \frac{m^2_j}{\gamma_j}\int\limits_{\mathbb{R}}(1\wedge x^2)\pi_j(\ud x)\\
&\hspace{2cm}+
2\sum\limits_{j=1}^{\infty} \frac{1}{\gamma_j}\int\limits_{\{|x|>1\}}\ln(|x|)\pi_j(\ud x),
\end{split}
\]
and
\[
\left|\sum\limits_{j=1}^{\infty} zm_j\int\limits_{\mathbb{R}}\pi_j(\ud x)\int\limits_{0}^{t}e^{-\gamma_j s}x\left(\mathbf{1}_{\{|e^{-\gamma_j s }x|\le 1\}}-1_{\{|x|\le 1\}}\right)\ud s\right| \leq 
|z| \sum\limits_{j=1}^{\infty} \frac{m_j}{\gamma_j}\pi_j([-1,1]^c),
\]
where the left-hand sides of both inequalities are finite by assumptions \eqref{nearzero} and  \eqref{nearzero1}. It is important to note that all our bounds do not depend on $t$, implying that  
$\chi^{(\epsilon)}_t$ converges in distribution as $t$ goes to infinity to $\chi^{(\epsilon)}_{\infty}$, where 
\[
\chi^{(\epsilon)}_{\infty}=
\sqrt{\epsilon}\sum\limits_{j=1}^{\infty}\frac{m_ja_j}{\gamma_j}+
\sqrt{\epsilon}\sum\limits_{j=1}^{\infty}m_j\sqrt{\sigma_j}\int\limits_{0}^{\infty}{e^{-\gamma_j s }\ud B^{(j)}_s}+
\sqrt{\epsilon}\sum\limits_{j=1}^{\infty}m_j\int\limits_{0}^{\infty}{e^{-\gamma_j s}\ud L^{(j)}_s},
\]
and  does not depend on the initial configuration $x_0$.
Moreover from its structure, it is not difficult to see that   $\chi^{(\epsilon)}_{\infty}$ is self-decomposable.
\end{proof}
\begin{proof}[Proof of Theorem \ref{supou}] Our arguments follows from similar reasonings as in the proof of the main result. For simplicity on exposition, we denote by $\mu^{\natural, m}_t$  for  the distribution of $I^{\natural, m}_t$, for $t\ge 0$, and $\mu^{\natural,m}_\infty$ for the distribution of $I^{\natural,m}_\infty$. Similarly, for $j\ge 1$, and $t\ge 0$,  we write $\mu^{(\natural, )j}_t$ for the distribution of  $I^{(\natural,  j)}_t$.

We now assume that there exist $j \ge 1$ such that for any $t\ge 0$, $\mu^{(\natural, j)}_t$ satisfies condition {\bf (H)} and prove that this implies that for any $t>0$,
$\widehat{\mu^{\natural,m}}_t(\cdot)$  is integrable and 
\[
\lim_{R \to \infty} \sup_{s> t_0(R)}\int_{[-R, R]^c}|\widehat{\mu^{\natural,m}}_s(\lambda)| \ud \lambda =0, 
 \]
 where $\widehat{\mu^{\natural,m}_t}$ denotes the characteristic function of $\mu^{\natural,m}_t$ and $t_0(R)$ is  positive and goes to $\infty$ as $R$ increases.  We recall  the latter integrability condition on $\widehat{\mu^{\natural,m}_t}$ implies that $I^{\natural, m}_t$,  $I^{\natural, m}_\infty$,  $\chi^{(1)}_t$ and its invariant distribution $\mu^{(1, m)}$ posses continuous densities. 
 
Without loss of generality, we assume that for any $t\ge 0$, $\mu^{(\natural, 1)}_t$ satisfies condition {\bf (H)}. Since the L\'evy processes $(\xi^{(j)}, j\ge 1)$ are independent, we deduce 
\begin{equation}\label{cochinita}
\left|\widehat{\mu^{\natural,m}_t} (\lambda)\right|=\prod_{j=1}^{\infty}\left|\widehat{\mu^{(\natural,j)}_t}(m_j\lambda)\right |\le \left|\widehat{\mu^{(\natural, 1)}_t}(m_1\lambda)\right|, 
\end{equation}
 implying that
  \[
  \int_{\mathbb{R}}\left|\widehat{\mu^{\natural,m}_t} (\lambda)\right|\ud \lambda\le \frac{1}{m_1} \int_{\mathbb{R}}\left|\widehat{\mu^{(\natural,1)}_t} (u)\right|\ud u<\infty.
  \]
 Next, we use again inequality \eqref{cochinita} and deduce 
 \[
  \int_{[-R, R]^c}\left|\widehat{\mu^{\natural,m}_t} (\lambda)\right|\ud \lambda\le \frac{1}{m_1} \int_{[-Rm_1, R m_1]^c}\left|\widehat{\mu^{(\natural,1)}_t} (u)\right|\ud u\le \frac{1}{m_1} \int_{[-R, R ]^c}\left|\widehat{\mu^{(\natural,1)}_t} (u)\right|\ud u,
 \]
 where the last inequality follows from the fact that $m_1\le 1$. In other words, 
 \[
\lim_{R \to \infty} \sup_{s> t_0(R)}\int_{[-R, R]^c}|\widehat{\mu^{\natural,m}}_s(\lambda)| \ud \lambda \le \frac{1}{m_1} \lim_{R \to \infty} \sup_{s> t_0(R)}\int_{[-R, R ]^c}\left|\widehat{\mu^{(\natural,1)}_t} (u)\right|\ud u=0,
 \]
  which implies that $\mu^{\natural,m}_t$ satisfies condition {\bf (H)}.
  
For the sequel, we follow the same notation as in the proof of Lemma \ref{opo} and observe 
$I^{\natural, m}_t=M_t+N_t,$ for  $t\ge 0$. 
From our assumptions, its limiting distribution $I^{\natural, m}_\infty$ is well-defined and moreover the r.v.
\[
\sqrt{\epsilon}\sum\limits_{j=1}^{\infty}\frac{m_ja_j}{\gamma_j}+
\sqrt{\epsilon}
I^{\natural, m}_\infty,
\]
has the same distribution as $\mu^{(\epsilon, m)}$.  For each $t> 0$,  we introduce
\[D^{(\epsilon, m)}(t):=\norm{\left(\frac{1}{\sqrt{\epsilon}}\sum\limits_{j=1}^{\infty}m_j x_je^{-\gamma_j t}
+ 
I^{\natural, m}_\infty\right)
-I^{\natural, m}_\infty
},
\]
and
\[R^{(m)}(t):=\norm{\left(-\sum\limits_{j=1}^{\infty}\frac{m_ja_j}{\gamma_j}e^{-\gamma_j t}+I^{\natural, m}_t\right)
-I^{\natural, m}_\infty
}.\]
The same argument  as those used in the proof of Lemma \ref{lemmadif} allows us to deduce
\[
\left|d^{(\epsilon, m)}(t)-D^{(\epsilon. m)}(t)\right|\leq R^{(m)}(t) \qquad \text{ for  }\quad t>0.
\]
Recall that $J=\{j\geq 1: \gamma_j=\inf\limits_{k\in \mathbb{N}}{\gamma_k}\}\not= \emptyset$. For simplicity, we define  
\[
v^{(\epsilon)}(t):=\frac{1}{\sqrt{\epsilon}}\sum\limits_{j=1}^{\infty}m_jx_{j}e^{-\gamma_j t}=\frac{1}{\sqrt{\epsilon}}\sum\limits_{j\in J}m_jx_{j}e^{-\gamma_j t}+\frac{1}{\sqrt{\epsilon}}\sum\limits_{j\in \mathbb{N}\setminus J}m_jx_{j}e^{-\gamma_j t},
\]
and observe that for any $b\in \mathbb{R}$, we have
\[
\lim\limits_{\epsilon\rightarrow 0}v^{(\epsilon)}(t_{\epsilon}+bw_\epsilon):=e^{-b}\sum\limits_{j\in J}m_jx_j,
\]
where $t_\epsilon$ and $w_\epsilon$ are chosen as in the statement. Therefore,
\[
\lim\limits_{\epsilon\rightarrow 0}D^{(\epsilon, m)}(t_{\epsilon}+bw_\epsilon):=
\norm{\left(e^{-b}\sum\limits_{j\in J}m_jx_j
+ I^{\natural, m}_\infty \right)-I^{\natural, m}_\infty
}.\]
It remains to prove that $R^{(m)}(t)$ goes to $0$ as $t$ goes to infinity.
Using the triangle inequality and Lemma \ref{sca} part i), we deduce
\begin{align*}
R^{(m)}(t) &\le \norm{I^{\natural, m}_t
-I^{\natural, m}_\infty
}+\norm{\left(-\sum\limits_{j=1}^{\infty}\frac{m_ja_j}{\gamma_j}e^{-\gamma_j t}+I^{\natural, m}_\infty\right)
-I^{\natural, m}_\infty
}.
\end{align*}
Since $\mu^{\natural,m}_t$ satisfies condition {\bf (H)} for any $t> 0$, we have that  $I^{\natural, m}_\infty$ has  a continuous  density  and since
\[
\lim\limits_{t\rightarrow \infty}\sum\limits_{j=1}^{\infty}\frac{m_ja_j}{\gamma_j}e^{-\gamma_j t}=0,
\] 
an application of   Scheff\'e's Lemma  allows us to deduce
\[
\lim_{t\to 0}\norm{\left(-\sum\limits_{j=1}^{\infty}\frac{m_ja_j}{\gamma_j}e^{-\gamma_j t}+I^{\natural, m}_\infty\right)
-I^{\natural, m}_\infty}=0.
\]
The proof that $\norm{I^{\natural, m}_t
-I^{\natural, m}_\infty
}$ goes to $0$, as $t$ increases,  follows from  the same arguments as those used in the proof of Proposition \ref{jcppp11} given that $\mu^{\natural,m}_t$ satisfies condition {\bf (H)}.
\end{proof}

\subsection{Proof of Theorem \ref{ave}}
\begin{proof}[Proof of the Theorem \ref{ave}] Let us consider the sequence $(\xi^{(j)}, j\ge 1)$ of independent copies of the stable process $\xi$ with drift $a$. Therefore, for $n\ge 1$ and $j\in\{1,\ldots, n\}$,  we can write
\[
X^{(\epsilon_n),j}_t=x_0e^{-\gamma t}+\sqrt{\epsilon_n}\frac{a(1-e^{-\gamma t})}{\gamma}+\sqrt{\epsilon_n}\int_{0}^{t}e^{-\gamma(t-s)}\ud  \tilde{\xi}^{(j)}_s,
\]
where $\tilde\xi^{(j)}_t=\xi^{(j)}_t-a t$, for $t\ge 0$.

For simplicity on exposition, for each $j\ge 1$, we denote
\[
Y^{(j)}_t:=\int_{0}^{t}e^{-\gamma(t-s)}\ud  \tilde\xi^{(j)}_s \qquad\textrm{for} \quad t\ge 0.
\]
In other words, the average process $(A^{(n)}_t,t\geq 0)$ satisfies
\[
A^{(n)}_t=x_0e^{-\gamma t}+\sqrt{\epsilon_n}\frac{a(1-e^{-\gamma t})}{\gamma}+
\frac{\sqrt{\epsilon_n}}{n}\sum\limits_{j=1}^{n}Y^{(j)}_t.
\]
Observe that the sequence of processes $(Y^{(j)}, j\ge 1)$ defined above is clearly independent and identically distributed. Moreover, it is not difficult to deduce that for each $t>0$, the distribution of $Y^{(j)}_t$ is strictly stable with characteristic exponent
\[
 \psi_{t,\alpha}(z)= -\frac{c(1-e^{-\alpha\gamma t})}{\alpha\gamma}|z|^{\alpha}\left(1-i\beta\tan(\pi\alpha/2)\mathrm{sgn}(z)\right) \qquad \textrm{for}\quad z\in \mathbb{R}.
\]
Moreover,  for each $j\ge 1$,  the following limit
\[
Y^{(j)}_\infty:=\lim_{t\rightarrow \infty}{Y^{(j)}_t}\qquad \textrm{exists,} 
\]
and is also a stable distribution with characteristic exponent
\[
 \psi_{\infty,\alpha}(z)= -\frac{c}{\alpha\gamma}|z|^{\alpha}\left(1-i\beta\tan(\pi\alpha/2)\mathrm{sgn}(z)\right) \qquad \textrm{for}\quad z\in \mathbb{R}.
\]

Next, for each $t> 0$, we define the auxiliary metric
\[D^{(n)}(t):=\norm{\left(\frac{n}{\sqrt{\epsilon_n}}x_0e^{-\gamma t}-\frac{n}{\gamma}ae^{-\gamma t}+\sum\limits_{j=1}^{n} Y^{(j)}_\infty\right)
-\left(\sum\limits_{j=1}^{n} Y^{(j)}_\infty\Big)\right)
},\]
and the error term
\[R^{(n)}(t):=\norm{\left(\sum\limits_{j=1}^{n}Y^{(j)}_t\right)-\left(\sum\limits_{j=1}^{n}Y^{(j)}_\infty\right)}.
\]
Similar reasonings as those used in Lemma \ref{errores} allow us to deduce
\[
\left|d^{(n)}(t)-D^{(n)}(t)\right|\leq R^{(n)}(t)\qquad  \text{ for  } t>0.
\]
On the other hand by the scaling   property  of the total variation distance (see Lemma \ref{sca} part  (ii)), we deduce
\[D^{(n)}(t)=\norm{\left(\frac{n^{1-1/\alpha}}{\sqrt{\epsilon_n}}x_0e^{-\gamma t}-\frac{n^{1-1/\alpha}}{\gamma}ae^{-\gamma t}+n^{-1/\alpha} \sum\limits_{j=1}^{n} Y^{(j)}_\infty\right)
-\left(n^{1-1/\alpha} \sum\limits_{j=1}^{n} Y^{(j)}_\infty\right)},
\]
and
\[R^{(n)}(t):=\norm{n^{-1/\alpha} \left(\sum\limits_{j=1}^{n}Y^{(j)}_t\right)-n^{-1/\alpha} \left(\sum\limits_{j=1}^{n}Y^{(j)}_\infty\right)}.
\]
Moreover, since the sequence $(Y^{(j)}, j\ge 1)$ is independent and with the same distribution, we have for each $t>0$
\[
n^{-1/\alpha} \sum\limits_{j=1}^{n} Y^{(j)}_t\overset{(d)}=\left(\frac{1-e^{-\gamma \alpha t}}{\alpha\gamma}\right)^{1/\alpha} \mathcal{S}_\alpha,
\]
where $\overset{(d)}=$ means identity in law or distribution. We observe that the latter identity in law also holds for $t=\infty$. 

In other words, we can rewrite the distance $D^{(n)}$ (after using the scaling   property  of the total variation distance) and error term $R^{(n)}$ as follows,
\[
D^{(n)}(t)=\norm{\left(\frac{n^{1-1/\alpha}}{\sqrt{\epsilon_n}(\alpha\gamma)^{1/\alpha} }x_0e^{-\gamma t}-\frac{n^{1-1/\alpha}}{\gamma (\alpha\gamma)^{1/\alpha} }ae^{-\gamma t}+ \mathcal{S}_\alpha\right)
- \mathcal{S}_\alpha},
\]
and
\[R^{(n)}(t)=\norm{\left(\frac{1-e^{-\gamma \alpha t}}{\alpha\gamma}\right)^{1/\alpha} \mathcal{S}_\alpha-\left(\frac{1}{\alpha\gamma}\right)^{1/\alpha} \mathcal{S}_\alpha}.
\]
Finally, we take the sequences $t_n$ and $w_n$ as in the statement and recall that  $\mathcal{S}_\alpha$ has a continuous unimodal  density. Therefore an application of  Scheff\'e's Lemma  allow us to deduce 
\[
\lim_{n\to\infty}D^{(n)}(t_n+cw_n)=\norm{\left( e^{-c}x_0+ \mathcal{S}_\alpha\right)
- \mathcal{S}_\alpha}, 
\]
and
\[
\lim_{n\to\infty}R^{(n)}(t_n+cw_n)=0.
\]
This completes the proof of our result.
\end{proof}

\appendix
\section{Tools}\label{tools}
The following section contains useful properties that help us to make this article more fluid.
Since  almost all  proofs are straightforward, we left most of the details to the interested  reader except for those that seem to be not so direct. 
\begin{lemma}\label{sca}
Let $(\Omega,\mathcal{F},\mathbb{P})$ be a probability space.
Let $X,Y:\Omega\rightarrow \mathbb{R}^d$ be random variables such that their laws are 
 are absolutely continuous with respect to the Lebesgue measure on $(\mathbb{R}^d,\mathcal{B}(\mathbb{R}^d))$. For any $a,b\in \mathbb{R}^d$ and $c\in \mathbb{R}\setminus\{0\}$,
the following holds true:
\begin{itemize}
\item[i)] 
$
\norm{(X+a)-(Y+b)}=\norm{(X+a-b)-Y}.
$
\item[ii)] 
$
\norm{(cX)-(cY)}=\norm{X-Y}.
$
\item[iii)] $
\norm{(cX+a)-(cY)}=\norm{(X+\nicefrac{a}{c})-Y}.
$
\end{itemize}
\end{lemma}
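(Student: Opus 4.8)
The plan is to reduce all three identities to one elementary observation: the total variation distance between the laws of two $\mathbb{R}^d$-valued random variables is unchanged when a common bi-measurable bijection of $\mathbb{R}^d$ is applied to both. Precisely, I would first record that if $\phi\colon\mathbb{R}^d\to\mathbb{R}^d$ is a Borel bijection with Borel-measurable inverse and $U,V$ are any $\mathbb{R}^d$-valued random variables on $(\Omega,\mathcal{F},\mathbb{P})$, then $\norm{\phi(U)-\phi(V)}=\norm{U-V}$. This is immediate from the definition of $\norm{\cdot}$: for every $A\in\mathcal{B}(\mathbb{R}^d)$ one has $\mathbb{P}(\phi(U)\in A)-\mathbb{P}(\phi(V)\in A)=\mathbb{P}(U\in\phi^{-1}(A))-\mathbb{P}(V\in\phi^{-1}(A))$, and as $A$ ranges over $\mathcal{B}(\mathbb{R}^d)$ the set $\phi^{-1}(A)$ ranges over all of $\mathcal{B}(\mathbb{R}^d)$ as well, so the two suprema defining the respective total variation distances coincide. (The absolute continuity hypothesis is not needed for this step, but it is available.)

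With this at hand, each part follows by a suitable choice of $\phi$. For (i), apply the fact with the translation $\phi(z)=z-b$ — an affine homeomorphism of $\mathbb{R}^d$ — to $U=X+a$ and $V=Y+b$; then $\phi(U)=X+a-b$ and $\phi(V)=Y$. For (ii), apply it with the dilation $\phi(z)=c^{-1}z$, which is a homeomorphism precisely because $c\neq0$, to $U=cX$ and $V=cY$, so that $\phi(U)=X$ and $\phi(V)=Y$. For (iii), apply the same dilation $\phi(z)=c^{-1}z$ to $U=cX+a$ and $V=cY$, giving $\phi(U)=X+\nicefrac{a}{c}$ and $\phi(V)=Y$; alternatively one chains (ii) and then (i).

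As an equivalent route one may argue through densities: if $f,g$ denote the Lebesgue densities of $X,Y$, then $\norm{X-Y}=\tfrac12\int_{\mathbb{R}^d}|f-g|\,\ud x$, while $X+a$, $cX$ and $cX+a$ have densities $f(\cdot-a)$, $|c|^{-d}f(\cdot/c)$ and $|c|^{-d}f((\cdot-a)/c)$ respectively, so the claims reduce to the translation- and dilation-invariance of Lebesgue integration via an obvious change of variables. I do not expect any genuine obstacle here; the only point deserving a word of care is the Borel-measurability of $\phi^{-1}$ (equivalently, the appearance of the Jacobian factor $|c|^{-d}$ in the density route), and in every instance this is trivial because the maps involved are affine homeomorphisms of $\mathbb{R}^d$.
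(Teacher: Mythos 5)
Your proof is correct, and your primary route is genuinely different from the paper's. The paper proves the lemma by appealing to the $L^1$ characterisation $\norm{X-Y}=\tfrac12\int_{\mathbb{R}^d}|f_X-f_Y|\,\ud z$ together with the change-of-variable theorem, i.e.\ exactly the ``equivalent route through densities'' you sketch in your closing paragraph. Your main argument instead observes that $\norm{\phi(U)-\phi(V)}=\norm{U-V}$ for any bi-measurable Borel bijection $\phi$, because $A\mapsto\phi^{-1}(A)$ is a bijection of $\mathcal{B}(\mathbb{R}^d)$ onto itself and hence the suprema defining the two total variation distances run over the same family of differences $\mathbb{P}(U\in B)-\mathbb{P}(V\in B)$. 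Then (i)--(iii) follow by choosing $\phi$ to be the appropriate translation or dilation. This pushforward argument is cleaner and strictly more general: it proves the invariance for arbitrary laws, so the absolute continuity hypothesis is revealed to be superfluous for this lemma, whereas the paper's density calculation uses it essentially. The one small point you rightly flag, the Borel measurability of $\phi^{-1}$ (or equivalently the Jacobian factor $|c|^{-d}$ in the density route), is trivially satisfied since all the maps are affine homeomorphisms. In short: correct, and your primary argument is a more elementary and more general alternative to the paper's; your secondary argument is the paper's.
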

\begin{proof}
The idea of the proofs of (i)-(iii) follow by the Change of Variable Theorem and using the characterisation of the total variation distance between two probabilities with densities
\[
\norm{X-Y}=\frac{1}{2}\int\limits_{\mathbb{R}^d}{\left|f_X(z)-f_Y(z)\right|\ud z},
\] 
where $f_X$ and $f_Y$ are the densities of $X$ and $Y$, respectively.
\end{proof}

\begin{lemma}[Convolution]\label{ayu5}
Let $(\Omega,\mathcal{F},\mathbb{P})$ be a probability space and
 $X_1,X_2,Y_1,Y_2,Z$ be r.v.'s defined on $\Omega$ and taking values in $ \mathbb{R}^d.$   
\begin{itemize}
\item[i)] Assume that $X_1$ and $X_2$ are independent, and that 
 $Y_1$ and $Y_2$ are independent. Then
\[
\norm{\left(X_1+X_2\right)-\left(Y_1+Y_2\right) }\leq   \norm{X_1-Y_1 }+
\norm{X_2-Y_2 }.
\]
\item[ii)] Assume that $(X_1, Y_1)$ is independent of  $Z$. Then
\[
\norm{\left(X_1+Z\right)-\left(Y_1+Z\right)}\leq  
\norm{X_1-X_2}.
\]
\end{itemize}
\end{lemma}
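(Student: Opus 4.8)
The plan is to reduce both parts to a single elementary fact: convolution with a probability measure does not increase the total variation distance. Write $\mu_i=\mathcal{L}(X_i)$, $\nu_i=\mathcal{L}(Y_i)$ for $i=1,2$ and $\rho=\mathcal{L}(Z)$. Since the distance in use is $\norm{\mathbb{P}-\mathbb{Q}}=\sup_{A}|\mathbb{P}(A)-\mathbb{Q}(A)|$, the absolute-continuity hypothesis of Lemma \ref{sca} is not needed here and one may argue directly with measures.

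First I would record the contraction step: for probability measures $\mu,\nu,\rho$ on $\mathbb{R}^d$ and a Borel set $A$, the identity $(\mu*\rho)(A)=\int_{\mathbb{R}^d}\mu(A-z)\,\rho(\ud z)$, where $A-z:=\{a-z:a\in A\}$, together with Fubini's theorem gives
\[
|(\mu*\rho)(A)-(\nu*\rho)(A)|\le\int_{\mathbb{R}^d}|\mu(A-z)-\nu(A-z)|\,\rho(\ud z)\le\norm{\mu-\nu};
\]
taking the supremum over $A$ yields $\norm{\mu*\rho-\nu*\rho}\le\norm{\mu-\nu}$.

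For part i), I would use that $X_1\perp X_2$ and $Y_1\perp Y_2$ give $\mathcal{L}(X_1+X_2)=\mu_1*\mu_2$ and $\mathcal{L}(Y_1+Y_2)=\nu_1*\nu_2$, then insert the intermediate law $\nu_1*\mu_2$ and apply the triangle inequality together with the contraction step twice (first with $\rho=\mu_2$, then with $\rho=\nu_1$, using commutativity of convolution):
\[
\norm{(X_1+X_2)-(Y_1+Y_2)}\le\norm{\mu_1*\mu_2-\nu_1*\mu_2}+\norm{\nu_1*\mu_2-\nu_1*\nu_2}\le\norm{\mu_1-\nu_1}+\norm{\mu_2-\nu_2}.
\]
Part ii) is then just the instance $X_2=Y_2=Z$ of part i): the hypothesis that $(X_1,Y_1)$ is independent of $Z$ implies $X_1\perp Z$ and $Y_1\perp Z$, so part i) applies and, since $\norm{Z-Z}=0$, it yields $\norm{(X_1+Z)-(Y_1+Z)}\le\norm{X_1-Y_1}$ (the symbol $\norm{X_1-X_2}$ on the right of the printed statement being evidently a misprint for $\norm{X_1-Y_1}$).

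I expect no genuine obstacle; the only point requiring care is the bookkeeping of independence, namely checking that the identity $\mathcal{L}(U+V)=\mathcal{L}(U)*\mathcal{L}(V)$ is invoked only for independent pairs $U,V$, so that each application of the contraction step is legitimate. A shorter alternative route would be the coupling characterisation $\norm{X-Y}=\inf\mathbb{P}(\widetilde X\ne\widetilde Y)$ over couplings of $\mathcal{L}(X)$ and $\mathcal{L}(Y)$: taking independent near-optimal couplings $(\widetilde X_1,\widetilde Y_1)$ and $(\widetilde X_2,\widetilde Y_2)$, the pair $(\widetilde X_1+\widetilde X_2,\widetilde Y_1+\widetilde Y_2)$ couples $\mathcal{L}(X_1+X_2)$ with $\mathcal{L}(Y_1+Y_2)$ and $\{\widetilde X_1+\widetilde X_2\ne\widetilde Y_1+\widetilde Y_2\}\subseteq\{\widetilde X_1\ne\widetilde Y_1\}\cup\{\widetilde X_2\ne\widetilde Y_2\}$; but this presupposes recording the coupling representation, which the excerpt has not done.
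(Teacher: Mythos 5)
Your argument is correct and is exactly the convolution route the paper indicates (its proof is only a one-line sketch, ``the distribution of a sum of independent random variables is a convolution''); you have simply supplied the contraction step and the intermediate law $\nu_1*\mu_2$ that the authors leave implicit. You are also right that the right-hand side of part ii) should read $\norm{X_1-Y_1}$, not $\norm{X_1-X_2}$, which is a misprint in the statement.
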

\begin{proof}
The idea of the proof follows from  the fact that the distribution of the sum of two independent random variables corresponds to their convolution. 
\end{proof}

The following Lemma is stated  in \cite{BJ1} and its proof is straightforward, we leave the details to the interested reader.
\begin{lemma}\label{a5}
Let $\left(\Omega,\mathcal{F},\mathbb{P}\right)$ be a probability space and let $X:\Omega \to \mathbb{R}^d$ be a random variable.
Assume that $\mathcal{L}(X)$ is absolutely continuous with respect to the Lebesgue measure on $(\mathbb{R}^d,\mathcal{B}(\mathbb{R}^d))$. Let $(\alpha_{\epsilon},\epsilon>0)$ be a function such that $\lim\limits_{\epsilon\rightarrow 0}{\|\alpha_{\epsilon}\|}=\infty$. Then
$
\lim\limits_{\epsilon\rightarrow 0}\norm{\left(\alpha_\epsilon+X\right)-X}=1.
$
\end{lemma}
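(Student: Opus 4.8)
The plan is to deduce the statement directly from the variational definition of the total variation distance together with the inner regularity of $\mathcal{L}(X)$; in fact absolute continuity is not really needed here, only that $\mathcal{L}(X)$ is a probability measure on $\mathbb{R}^d$. First I would record the trivial upper bound: since
\[
\norm{(\alpha_\epsilon+X)-X}=\sup_{A\in\mathcal{B}(\mathbb{R}^d)}\bigl|\mathbb{P}(X+\alpha_\epsilon\in A)-\mathbb{P}(X\in A)\bigr|,
\]
and every term in the supremum is the modulus of a difference of two numbers in $[0,1]$, one has $\norm{(\alpha_\epsilon+X)-X}\le 1$ for all $\epsilon>0$.

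For the matching lower bound I would fix $\delta>0$ and, using $D_R\uparrow\mathbb{R}^d$ as $R\to\infty$, pick $R=R(\delta)>0$ with $\mathbb{P}(X\in D_R)\ge 1-\delta$, where $D_R=\{z\in\mathbb{R}^d:|z|\le R\}$. Choosing $A=D_R$ in the supremum above and using $\{X+\alpha_\epsilon\in D_R\}=\{X\in D_R-\alpha_\epsilon\}$ yields
\[
\norm{(\alpha_\epsilon+X)-X}\ge \mathbb{P}(X\in D_R)-\mathbb{P}(X\in D_R-\alpha_\epsilon)\ge (1-\delta)-\mathbb{P}(X\in D_R-\alpha_\epsilon).
\]
Since $\lim_{\epsilon\to 0}|\alpha_\epsilon|=\infty$, for all sufficiently small $\epsilon$ one has $|\alpha_\epsilon|>2R$, and then every $z\in D_R-\alpha_\epsilon$ satisfies $|z|\ge|\alpha_\epsilon|-R>R$, i.e. $D_R-\alpha_\epsilon\subseteq D_R^c$; hence $\mathbb{P}(X\in D_R-\alpha_\epsilon)\le\mathbb{P}(X\in D_R^c)\le\delta$. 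Therefore $\norm{(\alpha_\epsilon+X)-X}\ge 1-2\delta$ for all small $\epsilon$, so that $\liminf_{\epsilon\to 0}\norm{(\alpha_\epsilon+X)-X}\ge 1-2\delta$; letting $\delta\downarrow 0$ and combining with the upper bound gives $\lim_{\epsilon\to 0}\norm{(\alpha_\epsilon+X)-X}=1$.

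I do not expect a genuine obstacle. The only point that deserves a moment's care is the disjointness step, namely that translating the ball $D_R$ by a vector longer than its diameter pushes it entirely into $D_R^c$, which is exactly what forces the translated mass of $X$ over $D_R$ to be negligible. If one prefers the analytic version, writing $f$ for the density of $X$ one has $\norm{(\alpha_\epsilon+X)-X}=\tfrac12\int_{\mathbb{R}^d}|f(z-\alpha_\epsilon)-f(z)|\,\ud z$, and the same conclusion follows by splitting this integral over $D_R$ and $D_R+\alpha_\epsilon$ (which are disjoint once $|\alpha_\epsilon|>2R$) and using $\int_{D_R^c}f\le\delta$ on each piece; this is merely the estimate above rewritten in terms of densities.
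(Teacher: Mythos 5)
The paper does not actually give a proof of this lemma; it only remarks that the statement comes from the reference \cite{BJ1} and is ``straightforward,'' leaving the details to the reader. So there is no proof in the paper to compare against, and your argument must be judged on its own merits.

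Your proof is correct and complete. The upper bound $\le 1$ is immediate from the definition of total variation, and the lower bound is a clean tightness argument: pick $R$ with $\mathbb{P}(X\in D_R)\ge 1-\delta$, test against $A=D_R$, and once $|\alpha_\epsilon|>2R$ the translated ball $D_R-\alpha_\epsilon$ is entirely contained in $D_R^c$, so $\mathbb{P}(X\in D_R-\alpha_\epsilon)\le\delta$ and $\norm{(\alpha_\epsilon+X)-X}\ge 1-2\delta$ for all small $\epsilon$. Letting $\delta\downarrow 0$ finishes it. Your side remark that absolute continuity of $\mathcal{L}(X)$ is not actually used is also correct: the argument needs only that $\mathcal{L}(X)$ is a (tight) probability measure on $\mathbb{R}^d$, so the lemma as you prove it is slightly more general than the hypotheses stated. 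The analytic reformulation via $\norm{(\alpha_\epsilon+X)-X}=\tfrac12\int_{\mathbb{R}^d}|f(z-\alpha_\epsilon)-f(z)|\,\ud z$, which you sketch at the end, is the version most in keeping with how the paper repeatedly invokes Scheff\'e's Lemma and density arguments elsewhere (e.g.\ Lemma \ref{sca}), but it is the same estimate as your measure-theoretic one; either presentation is acceptable.
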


\begin{proposition}[Real Spectrum]\label{lyapunov}
Let $Q\in \mathcal{M}^+(d)$ and $x_0\in \mathbb{R}^d\setminus\{0\}$. 
Denote by $\gamma_1,\gamma_2,\ldots,\gamma_d$ the eigenvalues of $Q$ which are assumed to be real.
\begin{itemize}
\item[i)] If  $Q$ is symmetric, then there exist $\gamma:=\gamma(x_0)>0$ and  $v(x_0)\in \mathbb{R}^d\setminus\{0\}$  such that
\begin{equation*}
\lim\limits_{t\rightarrow \infty}e^{\gamma t}e^{-  tQ }x_0=v(x_0)\in \mathbb{R}^{d}\setminus \{ 0\}.
\end{equation*}
\item[ii)] If $Q$ is not symmetric, then  there exist $\gamma:=\gamma(x_0)>0$, $l:=l(x_0)\in \{1,2,\ldots,d\}$ and $v(x_0)\in \mathbb{R}^d\setminus\{0\}$  such that
\begin{equation*}
\lim\limits_{t\rightarrow \infty}\frac{e^{\gamma t}e^{-  tQ }x_0}{t^{l-1}}=v(x_0)\in \mathbb{R}^{d}\setminus \{ 0\}.
\end{equation*}
\end{itemize}
\end{proposition}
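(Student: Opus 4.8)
The plan is to prove both parts by making the spectral, respectively Jordan, structure of $Q$ explicit and then reading off the dominant term of $e^{-tQ}x_0$ as $t\to\infty$.

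For part (i) I would invoke the Spectral Theorem: since $Q$ is symmetric there is an orthonormal basis $\{v_1,\dots,v_d\}$ of $\mathbb{R}^d$ with $Qv_j=\gamma_jv_j$, where $\gamma_1\le\cdots\le\gamma_d$ are the eigenvalues, which are real by hypothesis and hence strictly positive because $Q\in\mathcal{M}^+(d)$. Expanding $x_0=\sum_{j=1}^d\langle x_0,v_j\rangle v_j$ gives $e^{-tQ}x_0=\sum_{j=1}^d e^{-\gamma_jt}\langle x_0,v_j\rangle v_j$. Set $\gamma:=\min\{\gamma_k:\langle x_0,v_k\rangle\neq 0\}$, which is well defined since $x_0\neq 0$, and put $v(x_0):=\sum_{k:\,\gamma_k=\gamma}\langle x_0,v_k\rangle v_k$, a nonzero vector by the choice of $\gamma$. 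Then $e^{\gamma t}e^{-tQ}x_0=v(x_0)+\sum_{k:\,\gamma_k>\gamma}e^{-(\gamma_k-\gamma)t}\langle x_0,v_k\rangle v_k$, and every exponent $\gamma_k-\gamma$ appearing in the sum is strictly positive, so the sum tends to $0$ and therefore $e^{\gamma t}e^{-tQ}x_0\to v(x_0)$. I would also remark that $v(x_0)=P_\gamma x_0$, where $P_\gamma$ is the orthogonal projection onto $\ker(Q-\gamma\mathrm{I})$, so that the limit does not depend on the chosen eigenbasis.

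For part (ii) I would pass to the real Jordan canonical form: since $Q$ is real with all eigenvalues real, there are an invertible real matrix $S$ and a real Jordan matrix $J$ with $Q=SJS^{-1}$, hence $e^{-tQ}=Se^{-tJ}S^{-1}$. Writing a Jordan block of size $p$ with eigenvalue $\lambda$ as $\lambda\mathrm{I}+N$ with $N$ nilpotent, one has $e^{-tJ}=e^{-\lambda t}\sum_{q\ge 0}\frac{(-t)^q}{q!}N^q$, so its entries are of the form $e^{-\lambda t}(-t)^q/q!$ with $0\le q\le p-1$. Consequently $e^{-tQ}x_0=\sum_{\lambda\in\sigma(Q)}e^{-\lambda t}p_\lambda(t)$, where each $p_\lambda$ is an $\mathbb{R}^d$-valued polynomial. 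Because $x_0\neq 0$ and $e^{-tQ}$ is invertible, not all $p_\lambda$ vanish identically; put $\gamma:=\min\{\lambda:p_\lambda\not\equiv 0\}$, let $\ell-1:=\deg p_\gamma$, and let $v(x_0)\in\mathbb{R}^d\setminus\{0\}$ be the leading coefficient of $p_\gamma$. Then
\[
\frac{e^{\gamma t}}{t^{\ell-1}}\,e^{-tQ}x_0=\frac{p_\gamma(t)}{t^{\ell-1}}+\sum_{\lambda>\gamma}e^{-(\lambda-\gamma)t}\frac{p_\lambda(t)}{t^{\ell-1}},
\]
where the first term converges to $v(x_0)$ and each summand of the remaining finite sum is a rational function of $t$ multiplied by $e^{-(\lambda-\gamma)t}$ with $\lambda-\gamma>0$, hence tends to $0$. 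This gives the claim; and when $Q$ is diagonalizable, in particular when it is symmetric, all Jordan blocks are trivial, so $\ell=1$ and one recovers part (i).

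The explicit exponential of a Jordan block and the elementary limits are routine. The only point needing a little care is the definition of $\ell$ and $v(x_0)$ when several Jordan blocks share the dominant eigenvalue $\gamma$: I take $\ell-1$ to be the genuine degree of the polynomial $p_\gamma$ after any cancellation among those blocks and $v(x_0)$ its leading coefficient, which is then nonzero by construction, so no separate cancellation analysis is required; this is also where one sees that $\ell$ may be strictly smaller than the size of the largest Jordan block attached to $\gamma$.
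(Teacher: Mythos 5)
Your proof is correct and follows essentially the same route as the paper: the Spectral Theorem for the symmetric case and the Jordan decomposition with the explicit exponential of a Jordan block for the non-symmetric case. Your packaging of part (ii) via the vector-valued polynomials $p_\lambda$ is a slightly tidier way of organizing the paper's block-by-block argument (where no cancellation can actually occur, since distinct blocks occupy disjoint coordinates in the Jordan basis and the change-of-basis matrix is invertible), but the underlying decomposition and limits are identical.
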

\begin{proof}
We first prove part (i). Without loss of generality, 
we assume that $\gamma_1\leq \cdots \leq \gamma_d$.
Since $Q$ is a symmetric matrix, then it is diagonalisable. In other words, there exist orthogonal matrix $U$ such that
$Q=U\diag(\gamma_1,\ldots,\gamma_d)U^{T}$.  Therefore,  if we take  $x_0\not=0$ and let $y=U^{T}x_0=(y_1,\ldots,y_d)^{T}\not=0$, we have
\[e^{-tQ}x_0=U\diag(e^{-\gamma_1 t}y_1 ,\ldots,e^{-\gamma_d t}y_d) \qquad \textrm{for}\quad t\ge 0.\] 
We define  $\tau(x_0):=\min\{j\in\{1,\ldots,d\}:y_j \not=0\}$, and take limit as $t$ increases in the previous identity to deduce 
\[
\lim\limits_{t\rightarrow \infty}e^{\gamma_{\tau(x_0)}t}e^{-tQ}x_0=U\diag(0_{\tau(x_0)-1} ,y_{\tau(x_0)},\ldots,y_{k(x_0)} ,0_{d-k(x_0)})\not=0,
\]
where $k(x_0):=\max\{j\geq \tau(x_0):\gamma_j=\gamma_{\tau(x_0)}\}$,  and $0_{\tau(x_0)-1}$, $0_{d-k(x_0)}$  are the zeros of $\mathbb{R}^{\tau(x_0)-1}$ and $\mathbb{R}^{d-k(x_0)}$, respectively.  The latter  implies the statement of part (i).

For the proof of part (ii), we observe that since $Q$ is not symmetric,  it is not always diagonalisable.
Nevertheless, $Q$ has a Jordan decomposition.  In other words, there exist an invertible $d$-square matrix $U$ and a $d$-square  matrix $J$ such that
$Q=UJU^{-1}$, where 
\begin{equation}\label{jord}
J=
\left[\begin{array}{ccccc}
 J_{k_1}(\gamma_1)&0&\cdots &0 & 0\\
0& J_{k_2}(\gamma_2)&\cdots &0& 0\\
\vdots &\vdots &\ddots &\vdots & \vdots \\
0&0&\cdots & J_{k_{m-1}}(\gamma_{m-1})& 0\\
0&0&\cdots &0& J_{k_{m}}(\gamma_{m})\\
\end{array}\right],
\end{equation}
 $m\leq d$ and $\gamma_1,\ldots,\gamma_m\in (0,\infty)$ are the eigenvalues of $Q$.
In order to make the proof more readable, we first explain our arguments  in a Jordan block.  Recall that a Jordan block $J_k(\gamma)$ is a $k$-square upper triangular matrix of dimension $k$ defined as follows
\begin{displaymath}
J_k(\gamma)=\left[\begin{array}{ccccc}
\gamma&1&\cdots &0 & 0\\
0&\gamma&\cdots &0& 0\\
\vdots &\vdots &\ddots &\vdots & \vdots \\
0&0&\cdots &\gamma& 1\\
0&0&\cdots &0& \gamma\\
\end{array}\right].
\end{displaymath}
Since any Jordan block can be written as a sum of diagonal matrix and a nilpotent matrix
then the exponential matrix of a Jordan block can be computed explicitly
\begin{displaymath}
e^{-tJ_k(\gamma)}=\left[\begin{array}{ccccc}
e^{-\gamma t}&te^{-\gamma t}&\cdots & \frac{t^{k-2}}{(k-2)!}e^{-\gamma t} & \frac{t^{k-1}}{(k-1)!}e^{-\gamma t}\\
0&e^{-\gamma t}&\cdots & \frac{t^{k-3}}{(k-3)!}e^{-\gamma t} & \frac{t^{k-2}}{(k-2)!}e^{-\gamma t} \\
\vdots &\vdots &\ddots &\vdots & \vdots \\
0&0&\cdots &e^{-\gamma t}& te^{-\gamma t}\\
0&0&\cdots &0& e^{-\gamma t}\\
\end{array}\right],
\end{displaymath}
for any $t\ge 0$.

Next, we consider the simplest case which is  $Q=UJ_d(\gamma)U^{-1}$, where $\gamma>0$. Taking into account the previous identity,  we have

\begin{displaymath}
e^{-tQ}=U\left[\begin{array}{ccccc}
e^{-\gamma t}&te^{-\gamma t}&\cdots & \frac{t^{d-2}}{(d-2)!}e^{-\gamma t} & \frac{t^{d-1}}{(d-1)!}e^{-\gamma t}\\
0&e^{-\gamma t}&\cdots & \frac{t^{d-3}}{(d-3)!}e^{-\gamma t} & \frac{t^{d-2}}{(d-2)!}e^{-\gamma t} \\
\vdots &\vdots &\ddots &\vdots & \vdots \\
0&0&\cdots &e^{-\gamma t}& te^{-\gamma t}\\
0&0&\cdots &0& e^{-\gamma t}\\
\end{array}\right]U^{-1}, 
\end{displaymath}
for any $t\ge 0.$ We let $x_0\in \mathbb{R}^d\setminus\{0\}$ and write  $y=U^{-1}x_0=(y_1,\ldots,y_d)^T\not=0$. 
We define  $\tau(\tilde{y})=\max\{j\in \{1,\ldots,d\}:y_j\not=0\}$ and observe
\begin{displaymath}
\lim\limits_{t\rightarrow \infty}\frac{e^{\gamma_{\tau(y)} t}}{t^{{\tau(y)}-1}}e^{-tQ}x_0=
U\left[\begin{array}{c}
\frac{1}{(\tau(y)-1)!}y_{{\tau(y)}}\\
0\\
\vdots \\
0
\end{array}\right]\not=0.
\end{displaymath}
which implies our result.

For the general case, we only provide the main ideas and leave the details to the interested reader.  Notice that 
\[e^{-tQ}x_0=Ue^{-tJ}y \;\;\quad \textrm{for }\quad t\geq 0,
\]
where $y=U^{-1}x_0\not=0$ and $J$ is given  in \eqref{jord} and  where $J_{k_1},\ldots,J_{k_m}$ denote the Jordan blocks. For simplicity of exposition, we  denote by $y=(y_1,\ldots,y_d)^{\mathrm{T}}$ the coordinates of the vector $y$.

Next, we let $r_0=0$ and consider the partial sums
$r_j=\sum\limits_{i=1}^{j}{k_j}$ for each $j\in \{1,\ldots,m\}$.
For each $j\in \{1,\ldots,m\}$, we define the $k_j$-dimensional vector
$[y]_j:=(y_{r_{j-1}+1},\ldots,y_{r_{j}})^{\mathrm{T}}$ and let $I:=\{j\in\{1,\ldots,m\}: e^{-tJ_{k_j}}[y]_j\not=0\}$ which is not empty. Proceeding similarly as above  in each Jordan block, we observe that 
for any $j\in I$ there exist $\gamma_j>0$ and $l_j\in \{1,\ldots,d\}$ such that 
\[
\lim\limits_{t\rightarrow \infty}\frac{e^{\gamma_j t}}{t^{l_j-1}}e^{-tJ_{k_j}}[y]_j\not=0.
\]
Then, we take $\gamma:=\min\limits_{j\in I}\gamma_j$
and define $l:=\max\limits_{j\in \tilde{I}}l_j$ where $\tilde{I}:=\{j\in I: \gamma_j=\gamma\}$.
The previous identity may lead to  \[
\lim\limits_{t\rightarrow \infty}\frac{e^{\gamma{} t}}{t^{l-1}}e^{-tJ}y\not=0,
\] and consequently
$\lim\limits_{t\rightarrow \infty}\frac{e^{\gamma{} t}}{t^{l-1}}e^{-tQ}x_0\not=0$.
\end{proof}
\noindent \textbf{Acknowledgements.} Both authors  acknowledge support from  the Royal Society and CONACyT-MEXICO (CB Research grant). This work was concluded whilst JCP was on sabbatical leave holding a David Parkin Visiting Professorship  at the University of Bath, he gratefully acknowledges the kind hospitality of the Department and University.
GB gratefully acknowledges support from a post-doctorate CONACyT-MEXICO grant held at the Department of Probability and Statistics, CIMAT.
He also gratefully acknowledges support from a post-doctorate Pacific Institute for the Mathematical Sciences (PIMS, $2017$-$2019$) grant held at the Department of Mathematical and Statistical Sciences at University of Alberta and he would like to express his gratitude to University of Alberta for all
the facilities used along the realization of this work.

\bibliographystyle{amsplain}

\end{document}